\definecolor{myc}{RGB}{36,107,2}
\def\Z{\mathbb{Z}}
\def\R{\mathbb{R}}
\def\N{\mathbb{N}}
\def\F{\mathcal{F}}
\def\L{\mathcal{L}}
\def\d{\,\mathrm{d}}
\def\ca{\mathbbmss{1}}
\def\wt{\stackrel{*}{\rightharpoonup}}
\def\spt{\mathrm{spt}\,}
\def\e{\varepsilon}
\def\p{\mathbf{p}}
\def\v{\mathbf{v}}
\def\h{\mathbf{h}}
\def\p{\mathbf{p}}
\newtheorem{theorem}{Theorem}[section]
\newtheorem{corollary}[theorem]{Corollary}
\newtheorem{proposition}[theorem]{Proposition}
\newtheorem{lemma}[theorem]{Lemma}
\newtheorem{remark}[theorem]{Remark}
\theoremstyle{definition}
\newtheorem{definition}[theorem]{Definition}
\numberwithin{equation}{section}
\numberwithin{figure}{section}
\definecolor{mygreen}{RGB}{21,118,40} 
\author{Andrea Braides}
\address{SISSA, via Bonomea 265, Trieste}
\email{braides@mat.uniroma2.it}
\author{Marco Caroccia}
\address{Politecnico di Milano,  Piazza Leonardo da Vinci,  32,  20133 Milano (MI)}
\email{marco.caroccia@polimi.it}
\title[Dirichlet energy on Poisson point clouds]
{Asymptotic behavior of the Dirichlet energy\\ on Poisson point clouds}
\thanks{Preprint SISSA 06/2022/MATE}
\keywords{Poisson random sets, homogenization, discrete-to-continuum, Bernoulli percolation}
\begin{document}

\begin{abstract} We prove that quadratic pair interactions for functions defined on planar Poisson clouds and taking into account pairs of sites of distance up to a certain (large-enough) threshold can be almost surely approximated by the multiple of the Dirichlet energy by a deterministic constant. This is achieved by scaling the Poisson cloud and the corresponding energies and computing a compact discrete-to-continuum limit. In order to avoid the effect of exceptional regions of the Poisson cloud, with an accumulation of sites or with `disconnected' sites, a suitable `coarse-grained' notion of convergence of functions defined on scaled Poisson clouds must be given.

 \end{abstract} 

\maketitle
 
\tableofcontents

\section{Introduction}
The object of this paper is an analysis of the asymptotic behaviour of quadratic energies on Poisson random sets.  Loosely speaking  such sets are characterized by the property that the number of their points contained in a given set has a Poisson probability distribution, and that the random variables related to disjoint sets are independent. Even more loosely, in average the number of points contained in a set is proportional to the Lebesgue measure of the set. We denote by $\eta$ such a random set.

In order to define some almost-sure properties of $\eta$ we use a discrete-to-continuum approach that has been fruitfully used to derive continuum theories from microscopic interactions (see \cite{alicandro2004general}). A simple interpretation of this method is as a finite-difference approximation. If  $\eta$ is a deterministic periodic locally finite discrete set in $\mathbb R^d$, then we can consider real-valued functions $u:\eta\to\mathbb R$ and quadratic interaction potentials between points on $\eta$. The corresponding Dirichlet-type energy is 
\begin{equation}\label{eqq1}
\sum_{\langle x,y\rangle} (u(x)-u(y))^2,
\end{equation}
where $\langle x,y\rangle$ indicates summation over nearest-neighbouring pairs $(x,y)$ in $\eta$. We can then introduce a small parameter $\e$ and scale both the environment and the energies accordingly; namely, considering $u:\e\,\eta\to\mathbb R$ and 
\begin{equation}\label{eqq2}
\sum_{\langle x,y\rangle} \e^{d-2}(u(x)-u(y))^2,
\end{equation}
now summing over nearest-neighbouring pairs $(x,y)$ in $\e\,\eta$. By letting $\e\to 0$ we obtain a limit continuum energy, of the form
\begin{equation}\label{eqq3}
\int_{\mathbb R^d} {\bf A}\nabla u\cdot\nabla u\,{\rm d}x,
\end{equation}
where the matrix $\bf A$ carries information about the microstructure of the original set $\eta$. Note that in order to perform this passage to the limit we have to embed our energies in a common environment identifying functions on $\eta$ with suitable interpolations. The limit is meant in the sense of $\Gamma$-convergence, which implies that minimum problems for the limiting energies are approximations of the discrete ones, and can also be performed `locally',  by considering interactions only for $x\in \e\eta\cap U$ for a fixed open set $U$. 

In order to define analogs of \eqref{eqq1} and \eqref{eqq2} for the realization of a Poisson cloud $\eta$ we face a choice regarding what to consider as `interacting sites', whether nearest neighbours in the sense of Voronoi cells or points `close' in the sense of the ambient space. For the random set $\eta$ these two choices are not equivalent since nearest-neighbouring points on $\eta$ may be indeed arbitrarily distant in the ambient space, and conversely a very small distance between points of $\eta$ does not ensure that they are nearest neighbours in $\eta$. We choose the second option, which also seems closer to applications;  namely, we introduce an {\em interaction radius} $\lambda>0$ and consider the energies 
\begin{equation}\label{fed}
\F_{\e}(u)=\sum_{\substack{x,y\in Q\cap\e\eta, \\ |x-y|<\e\lambda}} \e^{d-2}(u(x)-u(y))^2,
\end{equation}
defined for $u:\e\,\eta\cap Q \to\mathbb R$, where $Q$ is the unit coordinate cube centered in $0$ (for ease of notation we treat only this case, which anyhow, up to scaling and localization, implies the result for any bounded Lipschitz open set in $\mathbb R^d$). Note that, if a $\Gamma$-limit of such energies does exist then, thanks to the invariance properties by rotations of $\eta$, it must be
a multiple of the Dirichlet integral (i.e., $\bf A$ in \eqref{eqq3} is equal to a multiple of the identity matrix), which we expect to be almost surely deterministic.

\begin{figure}[h!]
 \includegraphics[scale=0.50]{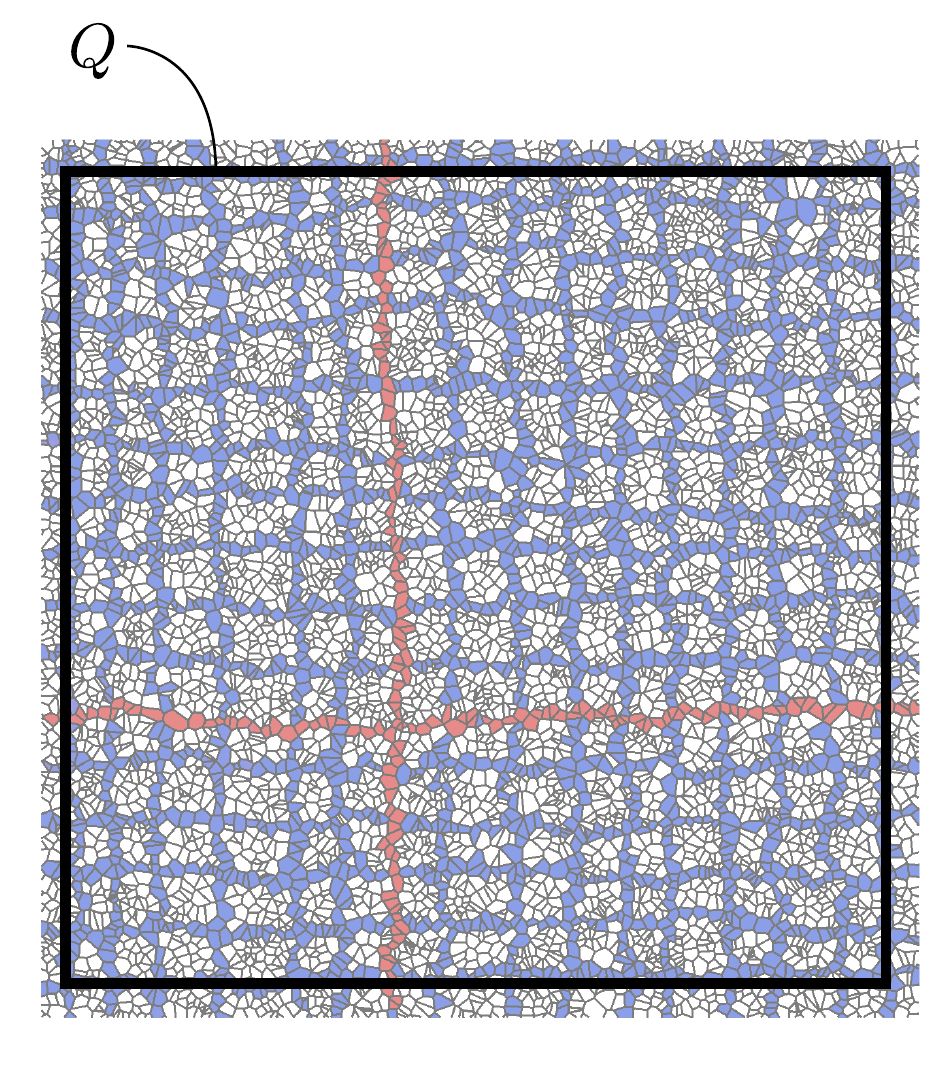}
\caption{A representation of a regular grid in $Q$ with two `paths' highlighted.}
\label{pic:grid}
\end{figure}

The main issue in proving the convergence of $\F_{\e}$ consists in providing a suitable notion of convergence for discrete functions $u_\e$ to a continuum parameter $u$, for which a compactness theorem can be proved under an assumption of boundedness of the energies. While for periodic $\eta$ we can use piecewise-constant interpolations on Voronoi cells (or, equivalently, piecewise-affine interpolations on the related Delaunay triangulation), for a Poisson point process we cannot control the behaviour of such interpolation due to the presence of arbitrarily large and arbitrarily small Voronoi cells. Nevertheless, we can prove that the union of `regular' Voronoi cells with suitably controlled dimensions form an infinite connected set in which we find `paths' of cells such that also cells at distance $\lambda$ are regular. This is done by exploiting a Bernoulli site-percolation argument. In the planar case $d=2$ the complement of this set of Voronoi is composed of isolated sets with controlled dimensions, so that the ambient space can be thought of as a ``perforated domain'', in which we do not have a control of the discrete functions only in isolated `holes' of controlled size. This allows to define a suitable convergence by choosing a subset $G_\e$ of $\e\eta$ composed of paths mentioned above whose union  $\mathcal{V}_{\e}(G_{\e})$ has the geometry of a square grid (see Fig.~\ref{pic:grid}) 
 We thus use these grids to define a suitable convergence notion: given a sequence of function $u_{\e}: \e\,\eta \rightarrow \R$ we say that $u$ is the $L^2$-limit of $u_{\e}$ if 
\begin{equation}\label{cone-f}
\int_{\mathcal{V}_{\e}(G_{\e})} |\hat{u}_{\e}-u|^2\d x \rightarrow 0;
\end{equation}
namely, if the $L^2$-distance between the piecewise-constant extensions $\hat{u}_{\e}$ of $u_{\e}$ and $u$, restricted to the Voronoi cells of the grid $G_{\e}$ vanishes as $\e\rightarrow 0$. 
Regular grids allow also to give a meaningful notion of boundary-value problems; in particular we can consider affine boundary condition as in the cell problems
	\begin{equation}\label{eqn:cellProb-1}
	\mathrm{m}(\xi ;TQ):=\inf\left\{ \sum_{x\in \eta \cap (TQ) } \sum_{y\in \eta\cap  B_\lambda(x)} |v(x)-v(y)|^2 \ \left| \begin{array}{c}
	v:\eta\rightarrow \R\\
 \ v(x)=\xi \cdot x  \ \ \text{\rm for all $x\in \eta$ such that}  \\ 
 \mathrm{dist}(x,\partial (TQ) )\leq 2\lambda
\end{array}	\right. \right\}.
	\end{equation}
Using subadditive ergodic theorems we then can prove that, if $\xi\neq0$, the constant $\Xi$ given by 
	\[
	\Xi:=\lim_{T\rightarrow +\infty} \frac{\mathrm{m}(\xi;T Q)}{T^2 |\xi|^2}
	\]
exists and is deterministic. Moreover, by the invariance properties of $\eta$ it does not depend on $\xi$. This allows to state and prove the main result of the paper, which is the almost sure $\Gamma$-convergence in the planar case $d=2$ of functionals \eqref{fed}  to 
\begin{equation}\label{fed0}
\F (u)=\Xi\int_Q |\nabla u|^2{\rm d}x
\end{equation}
with respect to the convergence in \eqref{cone-f}. Note that more in general we may consider energies of the form 
\begin{equation}\label{fed-a}
\F_{\e}^a(u)=\sum_{x,y\in Q\cap\e\eta} \e^{d-2} a\Bigl({x-y\over\e}\Bigr)(u(x)-u(y))^2,
\end{equation}
with $a$ positive and with compact support, recovering the case in \eqref{fed} as a special case when $a(\xi)$ is the characteristic function of the ball centered in $0$ and radius $\lambda$. If $a$ is radially symmetric the same limit result holds with obvious modifications in the statements.

The convergence theorem can be compared with various results in the literature. Our result is inspired by the recent paper \cite{braides2020homogenization}, where perimeter energies on Poisson random sets are considered. In that context a simpler compactness result can be obtained with respect to the convergence is measure of sets, by using a covering lemma that ensures that the energy cannot concentrate on non-regular Voronoi cells. In the present context, this would correspond to an extension theorem for Sobolev functions from regular sets, which seems hard to obtain due to the random geometry of clusters of non-regular Voronoi cells. Furthermore, we can compare our approach to that in \cite{BBL,ACG}, where a notion of stochastic lattice $\eta$ is given for which energies of the form \eqref{fed-a} can be considered. Differently from Poisson random sets, stochastic lattices are more regular, in that all Voronoi cells have controlled dimension and hence are regular in the terminology above, a condition that seems a considerable restriction in terms of applications. 
The regularity of the lattice implies that functionals $\F_{\e}^a$ are coercive with respect to the $L^2$ convergence of piecewise-constant interpolations on Voronoi cells. Conversely, in general the limits of functionals $\F_{\e}^a$, which exist under ergodicity and stationarity assumptions, are not isotropic even if $a$ is radially symmetric, except for specially constructed examples \cite{Ruf}. More general random distributions of sites have been considered within problems in Machine Learning by D.~Slepcev {\em et al.} \cite{caroccia2020mumford,GT1,GT2} (see also the references therein). In their approach the convergence is given in terms of suitable interpolations of discrete functions using Optimal-Transport techniques. The presence of non-regular Voronoi cells is mitigated by considering kernels $a^\e$ with increasing support as $\e\to 0$, which also allow to obtain isotropy in the limit (see also \cite{BraSol19} for variational limits using a coarse-graining approach).  Energies \eqref{fed-a} have a continuum approximation in terms of a convolution double integral, for which random homogenization has been considered in \cite{braides2019homogenization} (see also \cite{Ansini2020}). We note that the existence of regular paths can be proved in any dimension $d\ge 2$, but if $d>2$ the geometry of regular grids can be thought as a set of ``fibers'' rather than a perforated domain. We believe that the same asymptotic result holds but with an even more complex argument. Finally, we mention, following a remark by D.~Slepcev in a private communication, that our results may also have practical implications for the study of the graph Laplacian. Namely, one can show that  if one drops all of the eigenmodes of the graph Laplacian on a low degree random geometric graph where the eigenvector has large $L^\infty$ norm (in other words concentrates at few nodes) one still recovers the continuum spectrum.
\smallskip

We briefly outline the plan of the paper. In Section \ref{sct:2} we introduce the necessary definitions and notation for Poisson point clouds. This allows to give a definition of Dirichlet energy on a Poisson cloud $\eta$ and to prove some asymptotic properties as the Poisson set is scaled by a small parameter $\eta$ and correspondingly the energies $\mathcal F_\e$ as in \eqref{fed}. We introduce the notation for Voronoi cells and define grids of paths of regular cells ${\mathcal G}_{\e,t}$ depending on an additional parameter $t>0$. This allows to define the convergence of piecewise-constant functions $u_\e$ on Voronoi cells to a continuum function $u$ as the successive convergence of averages of $u_\e$ computed on ${\mathcal G}_{\e, t}$ at the ``mesoscopic scale'' $t$ to piecewise-constant functions $u^t$ defined on a square grid and then of such $u^t$ to a limit $u$ as $t\to+\infty$ (Definition \ref{def:conv}).  This is proved to be equivalent to the $L^2$ convergence on grids as in \eqref{cone-f}, and actually independent of the choice of the family of grids. In Section \ref{sct:3} we state the main results of the paper.  Theorem \ref{thm:Compactness} states the pre-compactness the sense of the convergence above of sequences with equi-bounded Dirichlet energy; Theorem \ref{thm:Gamma} is an almost-sure homogenization result characterizing the $\Gamma$-limit of Dirichlet energies as a deterministic quantity as in \eqref{fed0}. Section \ref{sct:comp} is devoted to the proof of the Compactness Theorem, based on the geometric properties of the grids that allow the use of Poincar\'e inequalities. In Section \ref{sct:gamma} we prove the Homogenization Theorem. The lower bound is obtained by using the Fonseca and M\"uller blow-up method, which is possible thanks to the use of cut-off functions that are locally constant close to non-regular Voronoi cells. The construction of recovery sequences is also made possible thanks to these `regular' cut-off functions. Finally Section \ref{sct:app} (the Appendix) contains the proof of the existence of regular grids.

%

\section{Notation and preliminaries}\label{sct:2}
In this section we introduce the main ingredients required to perform our analysis. For the sake of simplicity, and since our analysis will take place in this context, we always consider the ambient space dimension to be two-dimensional, even though some of the definitions and results can be extended to general space dimension.
\subsection{General notation}
We let $Q:=\left(-\sfrac{1}{2},\sfrac{1}{2}\right)^{2}$ denote the unit coordinate square centered in $0$. We will also write $Q_r=r Q$, $Q_r(x)=x+rQ$.  The same notation applies to $B_r(x)$,  being $B$ the unit ball of $\R^2$.  For a Radon measure $\nu\in \mathcal{M}^+(\R^2)$ the space $L^2(A; \nu)$ is defined as the space of all measurable functions $u:\spt(\nu)\cap A\rightarrow \R$ such that 
	\[
	\int_A |u(x)|^2 \d \nu(x)<+\infty.
	\]
When $\nu=\L^2$, the Lebesgue measure, we simply write $L^2(A)$.  We denote by $\mathrm{Bor}(A)$ the collection of all Borel subsets of $A$. The notation $|E|$ stands for \textit{the Lebesgue measure of $E$}. For a set $A$ and  for $t>0$ we define
	\[
	(A)_t:= \{x\in \R^2 \ | \ \mathrm{dist}(x,A)\leq t\}.
	\]
If no confusion arises, the notation $\{ x_i\in X_i\}_{i\in I}$ is used for a family $x_i$ indexed by $I$, such that $x_i\in X_i$

\subsection{Poisson point clouds}
Some basic properties of the stationary stochastic point process called \textit{Poisson point cloud} are here recalled.  A complete treatment of this subject can be found in \cite{daley2003introduction,schneider2008stochastic}.  In order to formally introduce this notion, we consider the family $\mathbf{N}_{s}$ of {\em simple measures};  \textit{i.e.}, 
	\begin{align*}
		\mathbf{N}_{s}&:=\biggl\{\left. \sum_{i\in I} \delta_{x_i}\in \mathcal{M}^+(\R^2) \ \right| \ \{x_i\}_{i\in I} \in \R^2, \ x_i\neq x_j \ \text{for all $i,j\in I$,  and $I$ a subset of $\N$} \biggr\}
	\end{align*}
Here and in the sequel, $\delta_x$ is the \textit{Dirac delta} at $x$.  For any Borel set $E\in \mathrm{Bor}(\R^2)$ and $k\in \N$ we define the subset $\mathbf{A}_{E,k}:=\{\mu\in \mathbf{N}_{s} \ | \ \mu(E)=k\} $ of $\mathbf{N}_{s}$ and consider the $\sigma$-Algebra	$\mathcal{N}$ generated by
$\{\mathbf{A}_{E,k} \ | \ E\in \mathrm{Bor}(\R^2), \ k\in \N\}$.
\begin{definition}\label{def:pp}
A \textit{Poisson point process on $\R^2$ with intensity $\gamma$} is a random element $\eta$ on $ (\mathbf{N}_s,\mathcal{N})$; that is, a map from a probability space $(\Omega,\mathcal{F},\mathbb{P})$ onto  $(\mathbf{N}_s,\mathcal{N})$, such that
	\begin{itemize}
	\item[1)] $\mathbb{P}( \eta\in \mathbf{A}_{E,k})=\displaystyle \frac{(\gamma|E|)^k}{k!}e^{-\gamma|E|}$;
	\item[2)] denoted by $\eta(B):(\Omega,\mathcal{F},\mathbb{P})\rightarrow \N$ the random variable induced by $\eta$ when fixing $B$, (namely $\eta(B):=\eta(\omega)(B)$ for $\omega\in \Omega$) then, for any $B_1,\ldots,B_m$ pairwise disjoint Borel sets we have that $\eta(B_1),\ldots,\eta(B_m)$ are independent.
	\end{itemize}
\end{definition}

For any Poisson point process $\eta$ we can observe that its probability distribution $\mathbb{P}_{\eta}$ on $(\mathbf{N}_s,\mathcal{N})$ satisfies
	\[
	\mathbb{P}(\eta(A)=k):=\mathbb{P}_{\eta}(\mathbf{A}_{E,k})=\mathbb{P}(\eta\in \mathbf{A}_{E,k}):=\mathbb{P}(\{\omega\in \Omega \ | \ \eta(\omega)\in \mathbf{A}_{E,k}\})= \frac{(\gamma|E|)^k}{k!}e^{-\gamma|E|}.
	\]
We will often make use of the notation $x\in \eta$,  or $x\in \eta(\omega)$ by meaning that $x\in \spt(\eta(\omega))$ for some realization $\omega\in \Omega$. Accordingly, $x\in \e\, \eta(\omega)$ will stand for $x\in \e\,\spt (\eta(\omega))$.\\

 Definition \ref{def:pp} implies (see \cite[Proposition 8.3]{last2017lectures}) in particular that the Poisson point process on $\R^2$ with intensity $\gamma$ is \textit{stationary}:  if we define $\tau_x \eta(A):= \eta(A+x)$, then $\tau_x \eta$ is equal in distribution to $\eta$ for any $x\in \R^d$.  This implies in particular that $\mathbb{P}(\eta(\R^2)<+\infty)=0$ (see \cite[Proposition 8.4]{last2017lectures}).  In the sequel,  whenever we speak of a Poisson point process we always mean \textit{a Poisson point process on $\R^2$ with intensity $\gamma$}. 
\subsection{Dirichlet energy on point clouds}
Let $\eta$ be a Poisson point process.  Without loss of generality, we fix the intensity to be $\gamma=1$ and we carry out our analysis on the unit square $Q$. This is not a restriction since we may localize our energies on regular subsets of $Q$ where the analysis applies unchanged, while we can deal with arbitrary bounded regular open sets by rescaling them to subsets of $Q$.

  Let $\lambda>0$ be a fixed parameter (the {\em interaction radius}) and for $u\in L^2(Q;\eta_{\e})$,  for a subset $A\subset Q$ define
	\[
	\F_{\e}(u;A):=\sum_{x\in \eta_{\e}\cap A} \sum_{y\in \eta_{\e} \cap B_{\lambda \e }(x)} |u(x)-u(y)|^2,
	\]  
where we have set
	\[
	\eta_{\e} (A):=\eta(\e^{-1} A), \ \ \ \spt(\eta_{\e} )=\e\,\spt(\eta).
	\]
	
{\begin{remark}\label{rmk:subadditivity}\rm The definition of $\F_\e$ takes into account the values of $u$ in a $\lambda\e$-neighbourhood of $A$. As a consequence, the energy $\F_\e$ is subadditive on essentially disjoint sets,  namely
	\[
	\F_{\e} (u; A\cup B)\leq \F_{\e}(u;A)+\F_{\e} (u;B) \ \ \ \text{for all $A,B\subset\R^2$,  $|A\cap B|=0$}.
	\]
Indeed, the energy of a function $u$ on some open set $A$, takes into account also the contribution of all those points 
	\[
	\mathcal{B}A:=\{ y\in (\R^2\setminus A)\cap \eta_{\e}  \ | \ \text{there exists $x\in A\cap \eta_{\e}$ such that $|x-y|\leq \lambda\e$}\}.
	\]
Note that $\F_\e$ is not local, since we may not have $\F_{\e}(u;A)=\F_{\e}(v;A)$ if $u=v$ on $A$ due to
the interaction around boundary points.  However,  if $u=v$ on $A$ we can infer that
	\[
	\F_{\e}(u;A\setminus (\partial A)_{\lambda\e} )=\F_{\e}(v;A\setminus (\partial A)_{\lambda\e} ).
	\]
\end{remark}}
We now give some estimates on the asymptotic behavior of $\F_\e$ and $\eta_\e$ by means of a kind of (spatial) Mean Ergodic Theorem.  We show that,  almost surely,  the average number of points in some open set $A$,  lying at a distance less then $\lambda \e$ can be bounded from above by the Lebesgue measure of $A$ (the proof of Proposition \ref{propo:count} follow the lines of the proof of \cite[Theorem 8.14]{last2017lectures}).
\begin{proposition}\label{propo:count}
There exists a constant $C$,  depending on $\lambda$ only,  such that the following property holds almost surely:
	\begin{equation}\label{eqn:estOnNumb}
  \limsup_{\e\rightarrow 0} \sum_{x\in \eta_{\e} \cap A}\e^2 \eta_{\e}(B_{\lambda\e}(x)) \leq C |A| \ 
	\end{equation}
 for any $A\subseteq Q$ with Lipschitz boundary.
\end{proposition}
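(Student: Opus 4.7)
The plan is to reduce \eqref{eqn:estOnNumb} to a multidimensional pointwise ergodic theorem for the unscaled Poisson process. Setting $T:=1/\e$ and changing variables $y=x/\e$, and using the identity $\eta_\e(B_{\lambda\e}(x))=\eta(B_\lambda(y))$, the inequality to prove rewrites as
\[
\limsup_{T\to+\infty}\frac{1}{T^2}\sum_{y\in \eta\cap TA}\eta(B_\lambda(y))\leq C|A|\qquad \text{a.s.}
\]
Mecke's formula, applied to the integrand $(y,\eta)\mapsto \mathbf{1}_B(y)\,\eta(B_\lambda(y))$, gives
\[
\mathbb{E}\Bigl[\sum_{y\in \eta\cap B}\eta(B_\lambda(y))\Bigr]=\int_B \mathbb{E}[(\eta+\delta_y)(B_\lambda(y))]\,\d y=(1+\pi\lambda^2)|B|,
\]
identifying the natural candidate $C=1+\pi\lambda^2$.

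For any bounded Borel $D\subset\R^2$, set $Y(D):=\sum_{y\in \eta\cap D}\eta(B_\lambda(y))$, which by the above is integrable. Since the Poisson process is stationary and ergodic under the $\R^2$-action of translations, the multidimensional pointwise ergodic theorem along the Van Hove sequence $\{TD\}_{T>0}$ (following the argument of \cite[Theorem 8.14]{last2017lectures}) yields, for each fixed cube $D$,
\[
\lim_{T\to+\infty}\frac{Y(TD)}{T^2|D|}=1+\pi\lambda^2\qquad \text{a.s.}
\]
Because the collection of dyadic squares in $\R^2$ is countable, the exceptional null sets can be unioned to produce an event $\Omega^\star$ of full probability on which the above convergence holds simultaneously for every dyadic square.

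Fix now $\omega\in \Omega^\star$, $A\subseteq Q$ with Lipschitz boundary, and $\tau>0$. Since Lipschitz regularity implies $|\partial A|=0$, outer regularity of the Lebesgue measure allows us to cover $A$ by a finite essentially disjoint union of dyadic squares $D_1,\ldots,D_M$ with $\sum_{i=1}^M|D_i|\leq|A|+\tau$. Since almost surely the Poisson points avoid the union $\bigcup_i\partial D_i$, one has
\[
\sum_{y\in\eta\cap TA}\eta(B_\lambda(y))\leq \sum_{i=1}^M Y(TD_i),
\]
so that dividing by $T^2$, taking the limsup on $\Omega^\star$, and using the ergodic convergence on each $D_i$ gives
\[
\limsup_{T\to+\infty}\frac{1}{T^2}\sum_{y\in\eta\cap TA}\eta(B_\lambda(y))\leq (1+\pi\lambda^2)(|A|+\tau).
\]
Letting $\tau\to 0$ yields \eqref{eqn:estOnNumb} with $C=1+\pi\lambda^2$. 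The only nontrivial point is to verify that the dilated sets $TD$ satisfy the Van Hove property required by the multidimensional ergodic theorem, which is immediate for cubes; the rest of the argument is an outer-approximation device that exploits the Lipschitz assumption on $\partial A$.
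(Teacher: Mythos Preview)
Your proof is correct and follows essentially the same architecture as the paper's: reduce the estimate on $A$ to an ergodic-type convergence on squares, then pass from squares to general $A$ by an outer covering with total measure close to $|A|$. The difference is in \emph{how} the ergodic limit is obtained.

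The paper does not invoke a multidimensional ergodic theorem as a black box. Instead it first bounds
\[
\sum_{x\in \eta_\e\cap A}\e^2\,\eta_\e(B_{\lambda\e}(x))
\;\le\; C\sum_{i\in\mathcal I(\e^{-1}A)}\e^2\,\eta(Q_\lambda(i))^2
\]
using that each ball $B_{\lambda\e}(x)$ is contained in a bounded number of the disjoint lattice cubes $Q_\lambda(i)$. The variables $X_i=\eta(Q_\lambda(i))^2$ are then genuinely i.i.d.\ with finite mean $\tau_\lambda$, so the strong law of large numbers gives the limit for rectangles anchored at the origin. An inclusion--exclusion over four such rectangles upgrades this to arbitrary rational rectangles, and a bounded-ratio subsequence $\{\e_n\}$ is used to pass from discrete to continuous $\e$. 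The constant produced is not sharp.

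Your route is more conceptual: you recognise $Y(D)=\sum_{y\in\eta\cap D}\eta(B_\lambda(y))$ as a stationary random measure with intensity computed by Mecke's formula, and you appeal directly to the spatial ergodic theorem for $\mu(TD)/|TD|$. This is shorter and yields the optimal constant $C=1+\pi\lambda^2$, at the cost of a heavier cited tool. One point worth making explicit: for dyadic squares $D\subset Q$ that do not contain the origin, the family $\{TD\}_{T>0}$ is not nested, so some standard formulations of the spatial ergodic theorem (those phrased for ``convex averaging sequences'') do not apply verbatim. This is easily repaired---either cite a version valid for tempered F{\o}lner sequences, or mimic the paper's inclusion--exclusion with four origin-anchored rectangles to recover the limit on off-centre squares---but you should flag which route you take rather than leaving it implicit.
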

\begin{proof}
{We consider a sequence $\{\e_n\}_{n\in \N}$ such that $\e_n\rightarrow 0$ and
\begin{equation}\label{stisosa}
	\frac{1}{C}\leq \frac{\e_n}{\e_{n+1}}\leq C,
\end{equation}
where $C>1$ is a fixed universal constant. } For an open set $A$ we define the following objects
	\begin{align*}
	\mathcal{I}(A):=\{i\in \lambda\Z^2 \ | \  Q_{\lambda}(i)\cap A\neq \emptyset\},\quad 
	\mathcal{Q}(A):=\{Q_\lambda(i) \ | \ i\in \mathcal{I}(A)\},\quad
	N(A):=\#(\mathcal{I}(A)),
	\end{align*}
and consider, for any $i\in \lambda\Z^2 $ the random variable $X_i:=\eta(Q_{\lambda}(i))^2$. Note that
	\[
	\mathbb{E}(X_i)=e^{-\lambda^2}\sum_{k=1}^{+\infty} k^2 \frac{(\lambda)^{2k}}{k!}=\tau_\lambda<+\infty.
	\]
Let $R:=(0,a)\times (0,b)$ for $a,b\in \R$.  Then, we can relabel each square $Q_{\lambda}(i) \in \mathcal{Q}(\e_n^{-1} R)$ in such a way that
	\[
	\frac{1}{N(\e_n^{1-} R)}\sum_{i\in \mathcal{I}(\e_n^{1-}R)} \eta(Q_\lambda(i) )^2=\frac{1}{N(\e_n^{-1} R)}\sum_{k=1}^{N(\e_n^{-1}R)} X_{i_k}.
	\]
If we now invoke the law of large numbers (see for instance \cite[Theorem B.11]{last2017lectures}), we have that
	\[
	\lim_{R\to+\infty}\frac{1}{N(\e_n^{-1} R)}\sum_{i\in I(\e_n^{-1}R)} \eta(Q_\lambda(i) )^2=\tau_\lambda=\mathbb{E}(X_1)
	\]
almost-surely.  Since $
	\e_n^2 N(\e_n^{-1} R) \rightarrow \frac{ |R|}{\lambda^2}
$
and
$
	\eta_{\e_n}(Q_{\lambda\e_n}(\e_n i))= \eta(Q_\lambda(i) ) 
$,
we have that
	\begin{align*}
	\sum_{i\in \mathcal{I}(\e_n^{-1} R)} \e_n^2\ \eta_{\e_n}(Q_{\lambda\e_n }(\e_n i))^2= \frac{(\e_n^2 N(\e_n^{-1}R))}{N(\e_n^{-1}R)}\sum_{i\in \mathcal{I}(\e_n^{-1} R) } \eta(Q_{\lambda}(i)) \rightarrow \frac{|R|}{\lambda^2} \tau_\lambda
	\end{align*}
for almost all $\omega\in \Omega$. Let now
	\begin{align*}
	\mu_{\e}(R)&:=\sum_{i\in \mathcal{I}(\e^{-1}R)} \e^2 \eta_{\e}(Q_{\lambda\e} (\e i) )^2, \ \ \ \mu_n(R):=\mu_{\e_n}(R),\\
\mathcal{R}_0&:=\{R=[0,p]\times[0,q] \ | \ p,q\in \mathbb{Q}, \ p,q,\leq 2\}\\
\Omega_0&:=\left\{\omega\in \Omega \ \Bigl| \ \mu_n(R)\rightarrow \frac{|R|}{\lambda^2} \tau_\lambda \ \text{for all $R\in \mathcal{R}_0$}\right\}.
	\end{align*}
Since $\mathcal{R}_0$ is a countable family of rectangles we have that $\mathbb{P}(\Omega_0)=1$. Let now $R:=[p,p']\times [q,q']\subset Q$ with $p,p',q,q'\in \mathbb{Q}$ and define
	\begin{align*}
	R_1&:=[0,p'] \times [0,q], \qquad R_2:=[0,p]\times [0,q']\\
	R_3&:=[0,p]\times [0,q], \qquad\ 
	R_4:=[0,p']\times [0,q'].
	\end{align*}
so that $
	R= R_4\setminus (R_1\cup R_2), \ |R|=|R_4|-|R_1|-|R_2|+|R_3|$.
Moreover
	$$
	\mu_n(R):= \mu_n(R_4) - \mu_n(R_1) - \mu_n(R_2)+ \mu_n(R_3)   + s_n,
	$$
	with
	$$s_n \leq  C\sum_{j=1}^4\sum_{i\in \mathcal{I}(\e_n^{-1} \partial R_j)} \e_n^2 \eta_{\e_n} (Q_{\lambda\e_n}(\e_n i))^2 .
$$
Since $s_n\rightarrow 0$ we immediately have
$
	\mu_n(R)\rightarrow \frac{|R|}{\lambda^2} \tau_\lambda
$.	
In particular, having defined
	\begin{align*}
	\mathcal{R}:=\{[p,p']\times[q,q'] \subset Q \ | \ p,p',q, q'\in \mathbb{Q}\},
	\end{align*}
	we have
$
		\mu_n(R)\rightarrow  \frac{|R|}{\lambda^2} \tau_\lambda \ \ \text{for all $\omega\in \Omega_0$  and $R\in \mathcal R$}
$.

{Let $R\in \mathcal{R}$,  let $\omega\in \Omega_0$ be a realization and let $\{\tilde{\e}_k=\tilde{\e}_k(\omega)\}_{k\in \N}$ be a sequence along which 
	\[
	\limsup_{\e\rightarrow 0}\sum_{i\in \mathcal{I}(\e^{-1} R)}  \e^2  \eta(\omega)(Q_{\lambda}(i))^2=\lim_{k\rightarrow +\infty}  \sum_{i\in \mathcal{I}(\tilde{\e}_k^{-1} R)}	\tilde{\e}_k^2 \eta(\omega)(Q_{\lambda}(i))^2.
	\]
Consider $\e_{n_k} \leq \tilde{\e}_k\leq \e_{n_k+1}$.  By \eqref{stisosa},  we can find $\tilde{R}\in    \mathcal{R}_0$ with $|\tilde{R}|\leq C|R|$ (for a universal constant $C$) and such that
$
	  \tilde{\e}_k^{-1}R\subset 	\e_{n_k}^{-1}\tilde{R} \ \ \ \text{for all $k\in \N$}$.
%
Note that $\mathcal{I}(\tilde{\e}_k^{-1}R)\subset \mathcal{I}(\e_{n_k}^{-1}\tilde{R})$ and thus
	\begin{align*}
	\sum_{i \in \mathcal{I}( \tilde{\e}_k^{-1}R)}  \tilde{\e}_k^2 \eta (\omega) (Q_{\lambda}(i ))^2	\leq\sum_{i \in \mathcal{I}(\e_{n_k}^{-1}\tilde{R})}  \e_{n_k+1}^2 \eta(\omega) (Q_{\lambda}( i ))^2
	\leq C\sum_{i \in \mathcal{I}(\e_{n_k}^{-1}\tilde{R})}  \e_{n_k}^2 \eta(\omega) (Q_{\lambda}( i ))^2.
	\end{align*}
By taking the limit and exploiting that $\tilde{R}\in \mathcal{R}_0$, $\omega\in \Omega'$, we achieve
	\[
 \limsup_{\e\rightarrow 0}\sum_{i\in \mathcal{I}(\e^{-1} R)}  \e^2  \eta(\omega)(Q_{\lambda}(i))\leq C |R|,
	\]
where $C$ depends on $\lambda$ only.  In particular we have that 
	\[
	 \limsup_{\e\rightarrow 0} \mu_{\e}(R)= \limsup_{\e\rightarrow 0} \sum_{i\in \mathcal{I}(\e^{-1} R)} \e^2 \eta(Q_{\lambda}(i))\leq C |R|
	\]
 almost surely. For any open set $A\subset Q$ and for any $\delta>0$ we can find a finite covering of disjoint rectangles $\{R_k\}_{k=1}^{N_k}\subset \mathcal{R}$ with
$
	\sum_{k=1}^{N_k}|R_k|\leq |A|+\delta
$.
Then, since $\mu_{\e} $ is sub-additive on disjoint sets, we conclude that
	\begin{align*}
	\mu_{\e} (A)\leq\sum_{k=1}^{N_k} \mu_{\e} (R_k) \Rightarrow  \limsup_{\e\rightarrow 0}\mu_{\e} (A)\leq C \sum_{k=1}^{N_k} |R_k|
	\end{align*}
almost surely. Now, since
	\begin{align*}
 \sum_{x\in A\cap \eta_{\e} } 	\e^2 \eta_{\e}(B_{\lambda\e}(x))&\leq C \sum_{i\in \mathcal{I}(\e^{-1} A)} \e^2 \eta_{\e} (Q_{\lambda \e} (\e i))^2 =C \mu_{\e}(A)
	\end{align*}
for a universal constant independent of $\lambda,n,\omega$ we obtain the claim for all $\omega\in \Omega_0$, which has probability $1$.}
\end{proof}
	\begin{corollary}\label{cor:est}
Fix $\lambda>0$.  There exists a constant $C$ depending on $\lambda$ only such that almost surely it holds
\begin{align}
\limsup_{\e \rightarrow 0}  \F_{\e} (u;A )&\leq C \int_A |\nabla u|^2\d x \label{eqn:estOnGrad}\\
\lim_{\e \rightarrow0}\e^2\sum_{x\in \eta_{\e} \cap A} u(x)^2\eta_{\e}(B_{\lambda\e}(x)) &\leq C  \int_A |u|^2\d x\label{eqn:estOnL2}
\end{align}
for any $u\in C^1(\overline{Q})$ and for any $A\subset \subset Q$ with Lipschitz boundary.
	\end{corollary}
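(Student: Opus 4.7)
The plan is to derive both \eqref{eqn:estOnGrad} and \eqref{eqn:estOnL2} by coupling the uniform continuity of $u$ and $\nabla u$ on $\overline{Q}$ with the density bound of Proposition \ref{propo:count}, applied on a fine coordinate tiling of $A$. For \eqref{eqn:estOnGrad}, the pointwise mean-value inequality $|u(x)-u(y)|^2 \leq \lambda^2\e^2 \, (\sup_{[x,y]} |\nabla u|)^2$, valid whenever $|x-y|\leq\lambda\e$, combined directly with Proposition \ref{propo:count}, immediately yields the crude bound $\limsup_\e \F_\e(u;A) \leq C\lambda^2 \|\nabla u\|_{L^\infty(\overline Q)}^2 |A|$. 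This has the correct $\e$-scaling but uses $\|\nabla u\|_\infty$ in place of $\|\nabla u\|_{L^2(A)}$, so a localization step is still needed.

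To localize, I fix $\rho>0$ and consider the tiling $\{Q_\rho(z_i)\}_{i\in I_\rho}$ with $z_i\in\rho\Z^2$, keeping only the finitely many cubes that meet $A$. The monotonicity of $\F_\e$ in the spatial variable together with the subadditivity from Remark \ref{rmk:subadditivity} gives $\F_\e(u;A) \leq \sum_{i\in I_\rho} \F_\e(u;Q_\rho(z_i))$. Applying the crude bound on each cube, controlling the sup of $|\nabla u|$ over a $\lambda\e$-neighbourhood of $Q_\rho(z_i)$ by $|\nabla u(z_i)|+\omega(2\rho)$ (where $\omega$ is the modulus of continuity of $\nabla u$ and $\lambda\e\ll\rho$ for $\e$ small), and then invoking Proposition \ref{propo:count} cube by cube for the finitely many cubes, I obtain
$$\limsup_\e \F_\e(u;A) \leq C\lambda^2 \sum_{i\in I_\rho} (|\nabla u(z_i)|+\omega(2\rho))^2 |Q_\rho(z_i)|.$$
The right-hand side is a Riemann sum: as $\rho\to 0$, the uniform continuity of $\nabla u$ together with the fact that $A$ has Lipschitz boundary (so that $|A_\rho\setminus A|\to 0$ for the cube cover $A_\rho := \bigcup_{i\in I_\rho} Q_\rho(z_i)$) forces convergence to $C\lambda^2 \int_A |\nabla u|^2 \d x$, which is \eqref{eqn:estOnGrad} after absorbing $\lambda^2$ into the constant.

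The second estimate \eqref{eqn:estOnL2} follows by the same cube decomposition with the pointwise bound $u(x)^2 \leq (|u(z_i)|+\omega_u(\rho))^2$ on $Q_\rho(z_i)$ (where $\omega_u$ is the modulus of continuity of $u$) in place of the mean-value inequality on $|u(x)-u(y)|^2$; the structure of the argument is otherwise identical and the Riemann sum now converges to $C\int_A |u|^2 \d x$. I do not foresee any genuine obstacle: Proposition \ref{propo:count} is the sole probabilistic input, and one only needs to use it simultaneously for the countable family of cubes $Q_\rho(z_i)$ with rational $\rho$ and $z_i$, so that the almost-sure event of full measure is uniform along any tiling chosen in the argument.
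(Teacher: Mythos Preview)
Your proposal is correct and follows essentially the same approach as the paper's own proof: both cover $A$ by a fine coordinate grid of squares, freeze $|\nabla u|$ (respectively $|u|$) by its supremum on each small square using uniform continuity, apply Proposition~\ref{propo:count} square by square, and recognize the result as a Riemann sum converging to the desired integral. Your care in noting that Proposition~\ref{propo:count} must hold simultaneously on a countable family of rational cubes is a sound (and slightly more explicit) handling of the almost-sure quantifier.
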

	\begin{proof}
Fix $A\subset Q$. Let $\mathcal{Q}_m:=\{Q^m_i\}_{i\in \N}$ be a dyadic division of $Q$ in squares of size $\frac{1}{m}$. Since $u\in C^1$ for any fixed $\delta$ we can find
$K\ge1$ such that
	\begin{align*}
	\Bigl| \max_{x\in Q^m_i}\{u(x)\}- \min_{x\in Q^m_i}\{u(x)\}\Bigr|\leq \delta \ \text{for all $Q^m_i\in \mathcal{Q}_m$ such that $Q^m_i\cap A\neq \emptyset$}\\
	\Bigl| \max_{x\in (Q^m_i)_{\lambda\e_n}}\{|\nabla u(x)|\}- \min_{x\in (Q^m_i)_{\lambda\e_n}}\{|\nabla u(x)|\}\Bigr|\leq \delta \ \text{for all $Q^m_i\in \mathcal{Q}_m$ such that $Q^m_i\cap A\neq \emptyset$}
	\end{align*}
	whenever $m,n\geq K$. 
Moreover $\displaystyle (A)_{\sfrac{2}{m}}\supset \bigcup_{Q^m_i\cap A\neq \emptyset} Q^m_i$ and
	\begin{align*}
	 \int_{(A)_{\sfrac{2}{m}}} u(x)^2\d x\leq \int_A u(x)^2\d x+ \delta, \quad
	  \int_{(A)_{\sfrac{2}{m}}} |\nabla u(x)|^2\d x\leq \int_A |\nabla u(x)|^2\d x+ \delta.
	\end{align*}
Then
	\begin{align*}
	\e^2 \sum_{x\in \eta_{\e}\cap A} \eta_{\e} (B_{\lambda\e}(x)) u(x)^2\leq \sum_{Q^m_i\cap A\neq \emptyset} \max_{x\in Q^m_i}\{ u(x)^2\} \sum_{x\in \eta_{\e}\cap Q^m_i} \e^2\eta_{\e}(B_{\lambda\e}(x)).
	\end{align*}
In particular by invoking Proposition \ref{propo:count}, almost surely  we have
	\begin{align*}
	&\hskip-1cm\limsup_{\e\rightarrow 0 } 	\e ^2 \sum_{x\in \eta_{\e}\cap A} \eta_{\e}(B_{\lambda\e}(x)) u(x)^2\\& \leq C \sum_{Q^m_i\cap A\neq \emptyset} \max_{x\in Q^m_i}\{ u(x)^2\} |Q^m_i| =C \sum_{Q^m_i\cap A\neq \emptyset}\int_{Q^m_i} \max_{x\in Q^m_i}\{ u(x)^2\} \d x\\
	&\leq C \sum_{Q^m_i\cap A\neq \emptyset}\int_{Q^m_i} (u(x)^2+\delta^2)\d x= C \int_{(A)_{\sfrac{2}{m}}} u(x)^2\d x+C \delta^2 |Q|\\
	&\leq  C \int_{A}  u(x)^2\d x+C \delta |Q|
	\end{align*}
for a constant that depends on $\lambda$ only.  Since $\delta$ is arbitrary we get \eqref{eqn:estOnL2}.  Also
	\begin{align*}
	\F_{\e} (u;A)&=\sum_{x\in \eta_{\e} \cap A}\sum_{y\in \eta_{\e} \cap B_{\lambda\e}(x)} |u(x)-u(y)|^2\\
	& \leq \sum_{Q^m_i\cap A\neq \emptyset} \sum_{x\in \eta_{\e} \cap Q^m_i}\sum_{y\in \eta_{\e} \cap B_{\lambda\e}(x)} |u(x)-u(y)|^2\\
	& \leq \sum_{Q^m_i\cap A\neq \emptyset} \sum_{x\in \eta_{\e}\cap Q^m_i}\sum_{y\in \eta_{\e} \cap B_{\lambda\e}(x)} |u(x)-u(y)|^2\\
	& \leq \lambda^2\sum_{Q^m_i\cap A\neq \emptyset} \max_{x\in (Q^m_i)_{\lambda\e} } |\nabla u(x)|^2 \sum_{x\in \eta_{\e} \cap Q^m_i}\e^2 \eta_{\e}(B_{\lambda\e}(x)).
	\end{align*}
Then Proposition \ref{propo:count} almost surely yields
	\begin{align*}
	\limsup_{\e \rightarrow 0}	\F_{\e}(u;A)&\leq C\sum_{Q^m_i\cap A\neq \emptyset} \int_{Q^m_i}\max_{x\in (Q^m_i)_{\lambda\e} } \{|\nabla u(x)|^2 \}\d x\\
	&\leq C \int_{A}  |\nabla u(x)|^2  \d x+C\delta^2|Q|
	\end{align*}
for a constant that depends on $\lambda$ only.   The arbitrariness of $\delta$ yields \eqref{eqn:estOnGrad}.
	\end{proof}
  
	\subsection{Voronoi cells and paths}
We now consider $\eta:\Omega\rightarrow \mathbf{N}_s$ a Poisson point process.
For a given realization $\omega$ we identify the \textit{Voronoi cell} of $x\in \eta$ as
	\[
 C(x,\eta):=\{y\in \R^d \ | \ |y-x|\leq |z-x|  \text{ for all $z\in \R^d$} \}.
	\] 
Note that 	
$C(x, \eta_{\e})=\e\, C\Bigl({x\over\e},\eta\Bigr)$.

We say that $x,y$ are \textit{nearest neighbors} if $C(x,\eta)$ shares a common edge with $C(y,\eta)$. In this case we write $\langle x,y\rangle$. Given $x\in \R^2$ we define 
	\[
	\pi_{\eta}(x):=\text{argmin}\{x\in \eta \ | \ |x-y| \}\,,
	\]
where, in case of multiple choices we consider the lexico-graphical order. We say that 
\[
\mathbf{p}(x,y):=\{x_{i_1},\ldots,x_{i_{M}}\}
\] 
is a \textit{path in $\eta$ connecting $x$ to $y$} if $x_{i_m}\in \eta$ for $m=1,\ldots,M$, $x_{i_1}=\pi_{\eta}(x), x_{i_M}=\pi_{\eta}(y)$ and $\langle x_{i_m},x_{i_{m+1}}\rangle$ for $m=1,\ldots M-1$.  Moreover, we let $\ell(\mathbf{p}):=\#(\mathbf{p})$ denote the \textit{length of a path $\mathbf{p}$}. This notion induces a natural metric on $\eta$
	\[
	\tau_{\eta}(x,y):=\min\{ \ell(\mathbf{p}(x,y)) \ | \ \text{$\mathbf{p} (x,y)$ is a path in $\eta$ connecting $x$ to $y$}\}.
	\]

We say that a sub-cluster $S\subset \eta$ is connected if for every $x,y\in S$ there is a path $\mathbf{p}(x,y):=\{x_{i_1},\ldots x_{i_M}\in S\}$ connecting $x$ to $y$. 

\subsection{Piecewise-constant extensions}\label{sbsct:pwCextension}
{
Let $\mathbf{q}\subseteq \eta_{\e} $ 
be a family of points in the point cloud.  We define 
	\[
	\mathcal{V}_{\e} (\mathbf{q}):=\bigcup_{x\in \mathbf{q}} C(x,\eta_{\e}).
	\]
For $u:\eta_{\e}\cap Q\rightarrow \R$ we define the \textit{piecewise-constant extension}
	\[
	\hat{u}: Q\rightarrow \R,  \ \ \ \hat{u}(x):= \sum_{y\in \eta_{\e}\cap Q} u(y)\ca_{C(y,\eta_{\e})}(x).
	\]}
\subsection{Geometric structure of Poisson point processes}\label{sbsct:Grd}
Now we state and prove some statistical properties of Poisson point processes that we find useful in the treatment of the Dirichlet energy on point clouds. For $t\in \R_+$ and a Borel set  $A\subset \R^2$ we define
$$
\mathcal{I}_{t}(A):=\left\{ J \in t\Z^2: \  Q_t(J)\cap A\neq \emptyset \right\},\ 
\mathcal{Q}_{t}(A):=\{Q_J^t:=Q_{t}(J), \ J\in\mathcal{I}_{t}(A) \},\ 
k_t(A):=\sqrt{\#(\mathcal{I}_{t})}.
$$
When it is clear from the context that $A=Q$ we sometimes write $\kappa_t$ in place of $\kappa_t(Q)$.
For $J\in \mathcal{I}_t(Q)$ we have $J=t(i,j) $, $ i,j\in \Z$. Therefore, we set $Q^t_{J}=Q^t_{i,j}\in \mathcal{Q}_t$ the square placed on the $i$-th row and $j$-th column.\smallskip

For $i=1,\ldots,k_t$, $j=1,\ldots, k_t$ we define the \textit{vertical} and \textit{horizontal} rectangles as
	\begin{align*}	R_i^{\textit{h}}(t):=\bigcup_{j=1}^{k_t} Q_{i,j}^t, \ \ \ R_j^{\textit{v}}(t):=\bigcup_{i=1}^{k_t} Q_{i,j}^t.
	\end{align*}
\subsubsection{Selecting a sub-cluster: Percolation Theory}
For a Poisson point process $\eta$ we consider the sub-cluster
	\[
	\eta^{\alpha}(\lambda):=\left\{x\in \eta \ \left| \ \mathrm{in}(C(x,\eta))>\alpha, \ \mathrm{diam}(C(x,\eta))<\alpha^{-1},\    \eta(B_\lambda (x))\leq  \alpha^{-1} \lambda^2 \right. \right\}
	\]
where the {\em in-radius} of a set $\mathrm{in}(A)$ is defined as
	\[
	\mathrm{in}(A):= \sup\{s>0 \ | \ \text{there exists $B_s(x)\subset A$}\}
	\]
	
\begin{definition}\label{def:pathConn}
Fix $\alpha$ and $\lambda$.  Let $\e, t>0 $ be fixed.  We say that a family of vertical and horizontal paths 
	\[
	G_{\e, t}=\{\mathbf{h}^i_m, \mathbf{v}^j_m, \ i,j\in \{1,\ldots,k_t\}, m\in \{1,\ldots, M_{\e,t} \} \}
	\]
is a \text{\em regular $t$-grid with $\Upsilon$ bounds for $\eta_{\e}$}, if the following properties are satisfied 
	\begin{itemize}
	\item[a)] all the paths are in $\e \eta^{\alpha}(\lambda)$;
	\item[b)] for any $m=1,\ldots, M_{\e,t}$, $\mathbf{h}^i_{m}$ connects the two opposite sides of $R_i^h(t)$ of size $t$ and is strictly contained in $R_i^h(t)$; 
		\item[c)] for any $m=1,\ldots, M_{\e,t}$, $\mathbf{v}^j_{m}$ connects the two opposite sides of $R_j^v(t)$ of size $t$ and is strictly contained in $R_j^v(t)$; 
	\item[d)] the following bounds hold for any $i,j\in \{1,\ldots,k_t\}$, $m \in\{1,\ldots,M_{\e,t}\}$, $k_t\in \N$;
	\begin{equation}\label{eqn:unifBnd}
	\begin{array}{c}
	\displaystyle\frac{t}{\Upsilon  \e}\leq \ell(\mathbf{h}^i_m\cap Q_{i,j}^t)\leq \frac{\Upsilon t }{\e}, \ \ \ 
	\displaystyle	\frac{t}{\Upsilon \e}\leq \ell(\mathbf{v}^j_m\cap Q_{i,j}^t)\leq \frac{\Upsilon t}{\e}\\
	\text{}\\
		\displaystyle	\frac{t}{\Upsilon \e }\leq M_{\e,t}\leq \frac{\Upsilon t}{\e}
\end{array}
	\end{equation}
	
	\item[e)] $\mathrm{dist}(\mathbf{h}^i_m,\mathbf{h}^{i'}_s)\geq 3\lambda \e$ and $\mathrm{dist}(\mathbf{v}^j_m,\mathbf{v}^{j'}_s)\geq 3\lambda \e$, for all $i, i',j,  j'\in \{1,\ldots,k_t\}$, $m,s\in\{1,\ldots,M_{\e,t}\}$ (with $m\neq s$ for $i=i'$ or $j=j'$);
	\item[f)] If $x\in (\mathbf{h}^i_m)_{3\lambda\e}\cap \eta_{\e}$, $(x\in (\mathbf{v}^j_m)_{3\lambda\e}\cap \eta_{\e})$ it holds $\eta_{\e}(B_{\lambda\e}(x))\leq \frac{1}{\alpha} \lambda^2$;
	\item[g)] If $x,y\in \mathbf{h}^i_m$, ($x,y\in \mathbf{v}^j_m$) neighboring points then $|x-y|\leq \lambda\e$;
	\end{itemize}
For the family of $(\e,t)$ regular grids with $\Upsilon$ bounds we use the notation
$
	\mathcal{G}_{t}(\Upsilon;\eta_{\e})$.\end{definition}
	
\begin{figure}
 \includegraphics[scale=0.62]{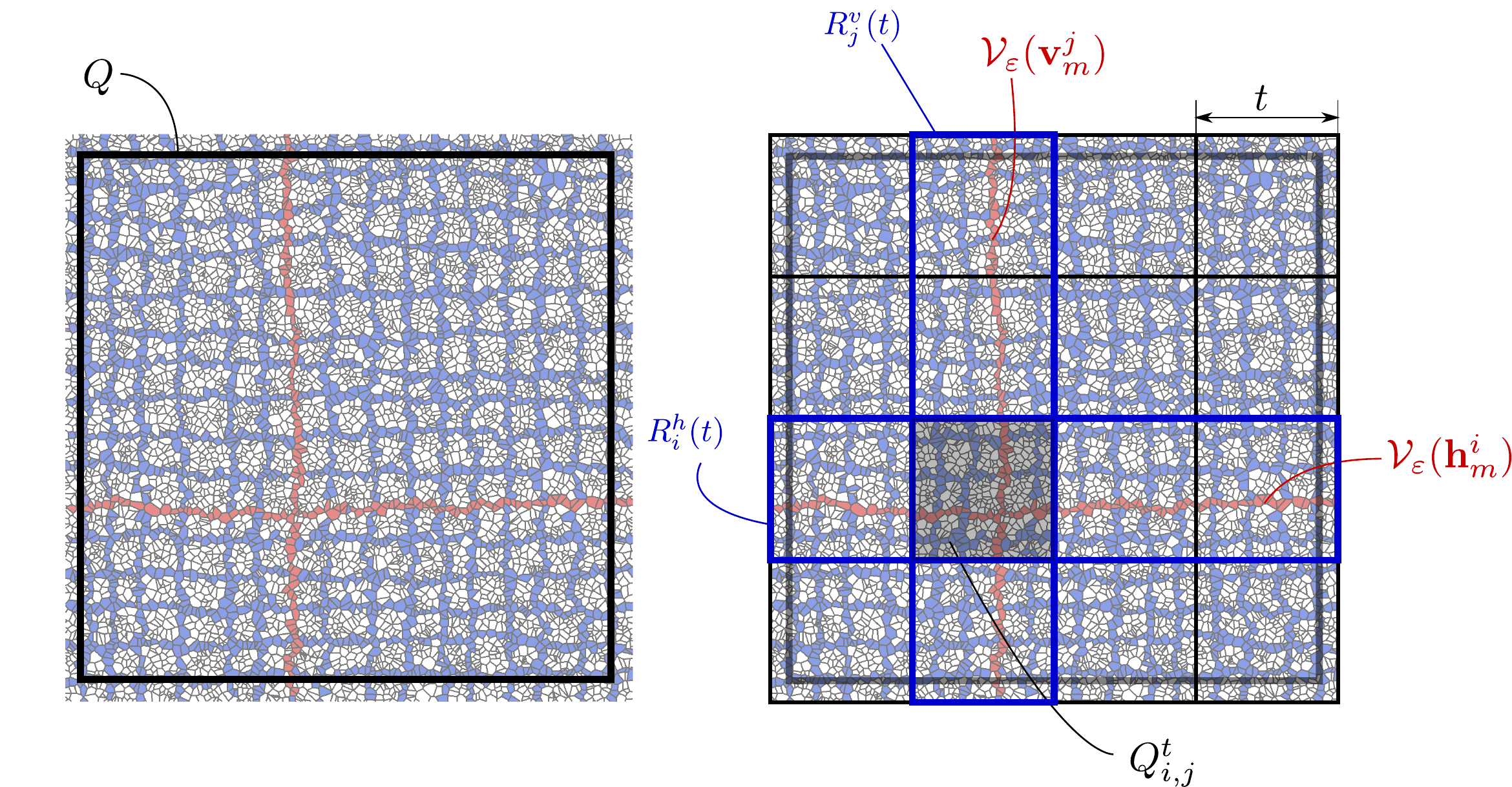}
\caption{A graphic visualization of Definition \ref{def:pathConn}.}
\label{pic:illustration}
\end{figure}
We invite the reader to confront Definition \ref{def:pathConn} with the situation depicted in Figure \ref{pic:illustration},  which has the only purpose of being illustrative.  For any $t>0$ we consider the $k_t^2 \approx 1/t^2$ squares covering $Q$. Then we consider horizontal and vertical rectangles made by union of these squares and labeled suitably (from left to right for the vertical,  and from bottom to top for the horizontal).  We define a grid to be regular if inside each rectangles we can find a certain number of paths of points with properties (a)--(g) of Definition \ref{def:pathConn}.  These paths are further labeled inside each rectangles (from left to right in the vertical rectangles,  and from bottom to top in the horizontal rectangles).  In Figure \ref{pic:illustration} are depicted the sets $\mathcal{V}_{\e}(\mathbf{v}_m^j)$ and  $\mathcal{V}_{\e}(\mathbf{h}_m^i)$ composed of the Voronoi cells of the points of the path.  Theorem \ref{thm:pathConn} ensures the existence of such grids with uniform bounds,  by exploiting a Bernoulli site-percolation argument (see Section \ref{sct:app}).\smallskip

The following result ensures that we can find a universal constant $\Upsilon$ such that $\mathcal{G}_{t}(\Upsilon;\eta_{\e})\neq \emptyset$ for a suitable choice of the parameters $\e, t,\alpha,\lambda$.   It comes as a re-adaptation of a percolation result in \cite{braides2020homogenization} coupled with a technical geometric construction.  We postpone the proof to Appendix \ref{sct:app}.
\begin{theorem}\label{thm:pathConn}
There exists $\alpha_0, \lambda_0$ with the following properties. Provided 
	\begin{equation}\label{bnd:par}
	\alpha\leq \alpha_0,  \ \lambda >\max\left\{\frac{2}{\alpha},\lambda_0\right\},
	\end{equation} 
we can find a constant $\Upsilon>0$, depending on $\alpha$ only,  and,  for any $t\in \R_+$ and for almost all realizations $\omega$, a constant $\e_0(\omega,t)>0$, such that, if $\e \leq \e_0$ then $
\mathcal{G}_{t}(\Upsilon;\eta_{\e})\neq \emptyset$.
\end{theorem}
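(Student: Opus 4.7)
The plan is to combine a Bernoulli site-percolation argument on a mesoscopic coarse-graining of $\R^2$ with a geometric construction that converts good percolation clusters into Voronoi paths satisfying all conditions (a)--(g) of Definition \ref{def:pathConn}. This re-adapts the percolation framework of \cite{braides2020homogenization} to the present ``paths'' setting.

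First, I would fix a mesoscale $\rho>2\lambda$ and tile $\R^2$ with $\rho\Z^2$. Declare a cell $Q_\rho(k)$ \emph{good} when every Poisson point of $\eta$ in a $3\lambda$-neighbourhood of $Q_\rho(k)$ belongs to $\eta^\alpha(\lambda)$ and, moreover, $\eta\cap Q_\rho(k)$ contains a connected Voronoi-skeleton of regular points that matches the corresponding skeleton of each neighbouring cell across the common face. Since the three conditions defining $\eta^\alpha(\lambda)$ depend only on $\eta\cap B_{c\lambda}(x)$ for a universal $c$, the event ``good'' is a finite-range-dependent site event, and direct Poisson estimates show $\mathbb{P}(\text{good})\to 1$ as $\alpha\to 0$ and $\lambda\to\infty$ (the in-radius and diameter conditions are ensured by having many well-spread points nearby, and the density bound by the law of large numbers). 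By the Liggett--Schonmann--Stacey stochastic-domination theorem, this field dominates an independent Bernoulli field of arbitrarily high parameter, exceeding $p_c^{\mathrm{site}}(\Z^2)$ as soon as $\alpha\le\alpha_0$ and $\lambda\ge\lambda_0$ as in \eqref{bnd:par}.

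Next, I would apply supercritical planar percolation to the good-cell field. In the coarse-grained rescaled strip $\e^{-1}R_i^h(t)/\rho$, which has one side of order $t/(\e\rho)$ and the other of order $1/(\e\rho)$, Grimmett's block argument together with the shape theorem gives, almost surely and for $\e\le\e_0(\omega,t)$, a family of $\Theta(t/\e)$ disjoint good left--right crossings. The lower and upper bounds in \eqref{eqn:unifBnd} on $M_{\e,t}$ follow directly (the upper bound because all chosen paths must fit inside a horizontal strip of width $1/\e$). I would then thin this family, retaining only paths at coarse-lattice separation $\ge 3$, so that Euclidean separation $\ge 3\lambda\e$ required by (e) is enforced; the same argument runs in the vertical strips $R_j^v(t)$.

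Finally, the geometric conversion step: inside each good cell along a crossing, the lower in-radius bound $\mathrm{in}(C(x,\eta))>\alpha$ and upper diameter bound $\mathrm{diam}(C(x,\eta))<\alpha^{-1}$ valid on $\eta^\alpha(\lambda)$ produce a Voronoi-neighbour sub-path of length between $c_1(\alpha)\rho/\e$ and $c_2(\alpha)\rho/\e$; concatenation over the $\Theta(t/\rho)$ cells of a crossing yields both sides of the length bound in \eqref{eqn:unifBnd} with $\Upsilon=\Upsilon(\alpha)$. Property (g) reduces to the diameter bound $\alpha^{-1}$ for neighbours of points in $\eta^\alpha(\lambda)$, which requires $\lambda>2/\alpha$, exactly as in \eqref{bnd:par}; property (f) is inherited from the $3\lambda$-neighbourhood clause in the definition of \emph{good}. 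The main obstacle I expect is precisely this last step: coupling the coarse-grained percolation to the true microstructure so that each crossing produces a concrete Voronoi path with simultaneously controlled length, controlled nearest-neighbour spacing, and guaranteed separation from sibling paths is a delicate bookkeeping exercise, and it is likely the reason the full argument is deferred to the Appendix.
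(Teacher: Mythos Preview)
Your overall architecture---coarse-grain to a mesoscopic lattice, show cells are ``good'' with high probability, invoke supercritical site percolation to get many disjoint crossings, then convert each crossing to a Voronoi path in $\eta^\alpha(\lambda)$---matches the paper's. Two points of divergence are worth flagging, one minor and one substantive.

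The minor one: you use Liggett--Schonmann--Stacey to pass from a finite-range-dependent ``good'' field to an i.i.d.~one. The paper avoids this entirely by defining the good event for a coarse cell $Q_{\Lambda\lambda}(i)$ in terms of $\eta$ restricted to that cell only (via a sub-grid of $\lambda$-boxes and a minimal-spacing condition on points inside), so the $\xi_i$ are genuinely i.i.d.\ and Theorem~\ref{thm:perc} applies directly. Your route is not wrong, just less economical.

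The substantive gap is in property~(d). You apply percolation directly to the full strip $\e^{-1}R_i^h(t)$, which has aspect ratio $t$, and claim $\Theta(t/\e)$ disjoint crossings with length bounds giving a single $\Upsilon$. But the crossing constants coming out of a percolation theorem of the type of Theorem~\ref{thm:perc} depend on the aspect ratio $\delta$, so a direct application would make $\Upsilon$ depend on $t$. More seriously, Definition~\ref{def:pathConn}(d) is a \emph{localized} bound: for every square $Q_{i,j}^t$ one needs $\ell(\mathbf{h}_m^i\cap Q_{i,j}^t)\le \Upsilon t/\e$. A crossing of a long thin rectangle can in principle backtrack heavily inside a single sub-square; your ``concatenation over $\Theta(t/\rho)$ cells'' gives only a global length estimate and does not control this. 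The paper addresses both issues by a two-stage construction: first (Proposition~\ref{prop:Ex} and Corollary~\ref{cor:onethird}) it runs the percolation argument only in rectangles of \emph{fixed} aspect ratio $1/2$, obtaining paths with $t$-independent constants; then it stitches these short pieces into full-length paths via an explicit junction rule (Definition~\ref{def:PJ} and the $\p_j$ construction in the proof), which preserves the $3\lambda\e$ separation in~(e) and automatically localizes the length bound because each $Q_{i,j}^t$ is covered by a bounded number of fixed-ratio pieces. This junction step---not the Voronoi conversion you flag as the obstacle---is the genuinely delicate part, and your proposal does not supply a substitute for it.
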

In order to lighten the notation we now choose $\alpha,\lambda$ satisfying \eqref{bnd:par} and we consider them to be fixed for the rest of the paper.  
%
We are covering $Q$ with squares of size $t$ and in horizontal, and vertical rectangles given by union of squares from the subdivision. We are considering the paths lying inside the rectangles and satisfying geometric properties (a)--(g). In particular, we find sometimes  convenient to work on $Q_{t k_t}\supset Q$, which represent a slightly bigger square and can be divided in exactly $k^2_t$ squares of size $t$.
\subsection{A notion of convergence for functions on Poisson point clouds}
For a fixed $t$ we introduce the set of \textit{simple function on $Q$} as the space
	\[
	X_t:=\biggl\{w\in L^2(Q_{t k_t} ) \ \biggl| \ w=\sum_{i,j=1}^{k_t^2} c_{i,j} \ca_{Q_{i,j}^t}\biggr\}.
	\]
Fix now $(\e,t)$ and consider a grid $G_{\e,t}\in \mathcal{G}_{t}(\Upsilon;\eta_{\e})$. For every $i,j=1,\ldots\ k_t$ we denote by
	\begin{equation}\label{eqn:media}
	(u)^{G_{\e,t}}_{i,j}:=\frac{1}{ \eta_{\e} (G_{\e,t}\cap Q_{i,j}^t)} \sum_{x\in G_{\e,t}\cap Q_{i,j}^t }u(x),
	\end{equation}
and we consider the operator $T^{G}: L^1(Q;\eta_{\e})\rightarrow X_t$ to be
\begin{equation}\label{eqn:operatore}
	T^{G_{\e,t}}(u)(x):=\sum_{i,j=1}^{k_t} (u)^{G_{\e,t}}_{i,j} \ca_{Q_{i,j}^t}(x).
\end{equation}

	\begin{definition}\label{def:conv}
 A sequence of function $u_{\e}: \eta_{\e}\cap Q \rightarrow \R$, $u_{\e}\in L^2(Q;\eta_{\e})$ is said to {\em converge to} $u:Q\rightarrow \R$,  and we simply write $u_{\e}\rightarrow u$, if there exists $\Upsilon \in \R_+$ such that, for any $t\in \R$ and for any sequence of grids $\{G_{\e,t}\in \mathcal{G}_{t}(\Upsilon;\eta_{\e})\}_{\e>0}$, it holds 
	\begin{align*}
	T^{G_{\e,t}}(u_{\e}) \longrightarrow   u^t& \ \ \text{in $L^2(Q_{t k_t})$}  \text{ as $\e\rightarrow 0$,}
	\end{align*}
where $\{u^t\in X_t\}_{t\in \R_+}$ satisfies
	\[
	u^t\rightarrow u  \ \ \text{in $L^2(Q)$} \text{ as $t\rightarrow +\infty$}.
	\]
	\end{definition}

\begin{remark}\rm
We note that in order to have a meaningful notion of convergence, it is necessary to prove that the convergence is well defined; that is, it is independent of the choice of the grids. This will be a consequence of Lemma \ref{lem:comparisongrid}. Also note that this convergence implies (and, along sequences with equibounded energy, is in fact equivalent to) the $L^2$ convergence of the piecewise constant extension $\hat{u}_{\e} $ restricted to the (Voronoi cells of the) grids
	(see Propositions \ref{prop:L2ongrid} and \ref{prop:equivalence})
\end{remark}
	\begin{remark}[Convergence up to subsequences] \label{rmk:subconv}\rm
Note that with this notion of convergence,  a sequence $u_{\e}$ converge to $u$ up to subsequences if there exists $\{\e_n\}_{n\in \N}$ such that for any sequence of regular grids $\{\{G_{\e_n,t}\}_{n\in \N}\in \mathcal{G}_{t}(\Upsilon;\eta_{\e_n} )\}_{t\in \R_+}$ it holds
	\begin{align*}
	T^{G_{\e_n,t}} (u_{\e_n}) \rightarrow  u^{t} \ \ \ \text{as $n\rightarrow +\infty$ in $L^2(Q_{t k_{t}} )$}, \qquad
	u^{t}  \longrightarrow  u \ \ \ \text{as $t\rightarrow 0$ in $L^2(Q)$.}
	\end{align*}
	\end{remark}

\section{The main results}\label{sct:3}
We have now introduced all the basic notation and we are thus ready to state our main results, which regard the asymptotic behavior of $\F_{\e}$.  The first one is a compactness result.

\begin{theorem}[Compactness Theorem] \label{thm:Compactness}
Let $\Omega\supset Q$ and given $\lambda_0>0$ as in Theorem \ref{thm:pathConn},  if $\lambda>\lambda_0$ the following holds.  If $\{u_{\e}\in L^2(\Omega;\eta_{\e})\}_{\e>0} $ is a sequence satisfying
\begin{equation}\label{eqn:CMPbnds}
	\sup_{\e >0} \biggl\{\sum_{x\in \eta_{\e} \cap A } \sum_{y\in \eta_{\e}\cap B_{\lambda\e}(x)} |u_{\e} (x)-u_{\e} (y)|^2+ \sum_{x\in \eta_{\e} \cap  Q} \e^2 u_{\e} ^2(x)\biggr\}<+\infty,
	\end{equation}
where $Q \subset A \subset \Omega$ is any open set strictly containing $Q$, then there exists $u\in W^{1,2}(Q)$ such that $u_{\e}$ converge to $u$,  up to subsequences,  in the sense of Definition \ref{def:conv}.
\end{theorem}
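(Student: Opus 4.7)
The strategy is to prove compactness at two nested scales. First, for each fixed mesoscopic scale $t>0$ and each regular grid $G_{\e,t}\in \mathcal{G}_t(\Upsilon;\eta_\e)$, I will extract an $L^2(Q_{tk_t})$-limit $u^t$ of the piecewise-constant averages $T^{G_{\e,t}}(u_\e)$; then I will show that $\{u^t\}_{t}$ is $L^2(Q)$-precompact with limit in $W^{1,2}(Q)$; finally a diagonal extraction combined with independence of the convergence from the particular choice of grids (Lemma 2.9 / Propositions 2.12 in the excerpt) yields the claim. I will also need to argue, at the end, that the same subsequence works for every sequence of grids, but this is the content of the independence statement and reduces to the bounds I will prove below.

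For fixed $t$, the uniform $L^2(Q_{tk_t})$-bound on $T^{G_{\e,t}}(u_\e)$ follows by Jensen, since each cell average $(u_\e)^{G_{\e,t}}_{i,j}$ involves $\sim t^2/\e^2$ path points, and the weight $t^2$ of each cell balances this exactly against the hypothesis $\e^2\sum_x u_\e(x)^2\le C$. The real work is in the discrete-gradient bound on $T^{G_{\e,t}}(u_\e)$. By property (g) of Definition \ref{def:pathConn} consecutive path points lie within $\lambda\e$, hence all their squared differences are counted inside $\F_\e(u_\e;A)$; by property (e), the different paths are pairwise $3\lambda\e$-separated, so the per-path energies $E_m^h$, $E_{m'}^v$ are summable and jointly bounded by $\F_\e(u_\e;A)\le C$. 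A 1D discrete Poincar\'e along each path, combined with property (d) which fixes $N\approx t/\e$ points per path per cell, yields
\[
|\bar u^{i,j}_m - \bar u^{i,j+1}_m|^2 \le C\frac{t}{\e}\, E_{m,\text{loc}}^h, \qquad \max_{x\in \mathbf{h}^i_m\cap Q^t_{i,j}}|u_\e(x)-\bar u^{i,j}_m|^2 \le C\frac{t}{\e}\, E_m^h,
\]
for the path-cell averages $\bar u^{i,j}_m$. Averaging over the $M_{\e,t}\approx t/\e$ parallel horizontal paths via Jensen gains exactly a factor $\e/t$, so the horizontal cell-means $\bar U^{i,j}_h$ satisfy $\sum_{i,j}|\bar U^{i,j}_h-\bar U^{i,j+1}_h|^2\le C\,\F_\e(u_\e;A)$ uniformly in $t$ and $\e$, and symmetrically for $\bar U^{i,j}_v$.

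The main obstacle is bridging the horizontal and vertical cell-means $\bar U^{i,j}_h$ and $\bar U^{i,j}_v$ inside a common cell $Q^t_{i,j}$: property (e) of Definition \ref{def:pathConn} separates horizontal from horizontal and vertical from vertical, but says nothing about horizontal versus vertical paths. I will exploit the fact that, by properties (b)--(c), every horizontal path $\mathbf{h}^i_m$ must topologically cross every vertical path $\mathbf{v}^j_{m'}$ inside $Q^t_{i,j}$, and hence produce points $x_{m,m'}\in \mathbf{h}^i_m$ and $y_{m,m'}\in \mathbf{v}^j_{m'}$ at distance $\le C\lambda\e$ whose Voronoi cells must meet near the crossing; the pair $(x,y)$ then contributes $|u_\e(x)-u_\e(y)|^2$ to $\F_\e$. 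Using the triangle inequality
\[
|\bar u_m^h-\bar u_{m'}^v|\le |\bar u_m^h-u_\e(x_{m,m'})|+|u_\e(x_{m,m'})-u_\e(y_{m,m'})|+|u_\e(y_{m,m'})-\bar u_{m'}^v|,
\]
bounding the outer terms by the path Poincar\'e estimate above and summing the inner term across the $M_{\e,t}^2$ pairs (each crossing being counted once in $\F_\e$), the $M_{\e,t}^{-2}$ normalization of $|\bar U^{i,j}_h-\bar U^{i,j}_v|^2$ absorbs the $(t/\e)^2$ blow-up; summation over $(i,j)$ gives $\sum_{i,j}|\bar U^{i,j}_h-\bar U^{i,j}_v|^2\le C\,\F_\e(u_\e;A)$. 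Since $(u_\e)^{G_{\e,t}}_{i,j}$ is a convex combination of $\bar U^{i,j}_h$ and $\bar U^{i,j}_v$, combining the three estimates gives the required discrete-gradient bound, uniformly in $t$ and $\e$.

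With both bounds uniform in $\e$, for each fixed $t$ standard $L^2$ compactness for piecewise-constant functions on a size-$t$ grid in dimension two produces $u^t\in X_t$ with $T^{G_{\e_n,t}}(u_{\e_n})\to u^t$ in $L^2(Q_{tk_t})$ along a subsequence. Because the discrete-gradient bound is uniform also in $t$, the family $\{u^t\}_t$ has uniformly bounded discrete Dirichlet energy and $L^2$-norm, so it is $L^2(Q)$-precompact with a limit $u\in W^{1,2}(Q)$ as $t\to+\infty$. A standard diagonal extraction over a countable exhaustion $t_k\to+\infty$, together with the grid-independence of the convergence provided by the comparison Lemma 2.9 (which forces the same $u$ regardless of the grid choice), concludes $u_{\e}\to u$ in the sense of Definition \ref{def:conv}.
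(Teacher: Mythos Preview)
Your argument is essentially correct and reaches the same endpoint as the paper (a uniform-in-$t$ discrete gradient bound on $T^{G_{\e,t}}(u_\e)$, followed by the lattice compactness Lemma~\ref{lem:alici} and a diagonal extraction plus grid-independence), but the route to the key estimate $\sum_{\langle J,J'\rangle}|(u_\e)^{G}_{J}-(u_\e)^{G}_{J'}|^2\le C\F_\e(u_\e;A)$ is genuinely different. The paper proves this directly via Lemmas~\ref{lem:ref}--\ref{lem:localEstimate}: for each pair of adjacent cells it selects, by pigeonhole over the $M_{\e,t}$ disjoint paths, a single low-energy ``skeleton'' $\mathbf p$ (one horizontal plus two vertical segments) carrying energy $\le \Upsilon\e t^{-1}\F_\e$, and then connects every grid point to $\mathbf p$ with an explicit family of paths whose multiplicities are controlled by $t/\e$. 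You instead decompose the cell average into horizontal and vertical path-averages $\bar U_h,\bar U_v$, bound same-direction increments by 1D Poincar\'e along each path plus Jensen over the $M_{\e,t}$ paths, and bridge $\bar U_h$ to $\bar U_v$ using the $M_{\e,t}^2$ topological crossings of horizontal with vertical paths (the $3\lambda\e$-separation in property~(e) makes the crossing pairs $(x_{m,m'},y_{m,m'})$ distinct contributions to $\F_\e$, and $\lambda>2/\alpha$ guarantees $|x-y|<\lambda\e$). This is a legitimate alternative: the paper's skeleton method is cleaner bookkeeping and generalizes more readily to the grid-comparison Lemma~\ref{lem:comparisongrid}, while your crossing method is more transparent about where each factor of $t/\e$ comes from.

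Two points to tighten. First, the claim that $(u_\e)^{G_{\e,t}}_{i,j}$ is a convex combination of $\bar U^{i,j}_h$ and $\bar U^{i,j}_v$ is not exact because of the overlap $H\cap V$; however, writing $(u_\e)^G_{i,j}-\bar U^{i,j}_h=\theta(\bar U^{i,j}_{V\setminus H}-\bar U^{i,j}_h)$ with $\theta\in(0,1)$ and using your bridge estimate (C) gives $t^2\sum_{i,j}|(u_\e)^G_{i,j}-\bar U^{i,j}_h|^2\le Ct^2$, so the $L^2$-distance between $T^{G}(u_\e)$ and the compact family built on $\bar U_h$ is $O(t)$, which suffices. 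Second, the limit in Definition~\ref{def:conv} is taken as $t\to 0$ (the mesoscopic cell size shrinks), not $t\to+\infty$; the paper's compactness proof and Lemma~\ref{lem:alici} are stated accordingly.
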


\begin{remark}\rm
Note that in requiring to a sequence to have equibounded energy on $Q$ we need to take into account Remark \ref{rmk:subadditivity} and the fact that the energy is carried also on $\mathcal{B}Q$.  For this reason we state the compactness Theorem in terms of $u_{\e}\in L^2(\Omega;\eta_{\e})$,  with a uniform bound for the energy assumed on a set $A$,  provided $Q\subset A\subset \Omega$.  In other words we are asking to the functions $u_{\e}$ to be defined on a slightly bigger open set than just $Q$. 
\end{remark}
With this compactness theorem in mind,  that will be proved in Section \ref{sct:comp}, the following $\Gamma$ convergence result is then meaningful.

\begin{theorem}[$\Gamma$-convergence] \label{thm:Gamma}
There exists a deterministic constant $\Xi$ such that,  for almost all realizations $\omega$,  the energy $\F_{\e}(\cdot;Q)$ $\Gamma$-converges to 
	\[
	\F(u;Q)=\Xi\int_Q |\nabla u(x)|^2\d x
	\]
in the topology induced by the convergence of Definition \ref{def:conv}.  Namely
\begin{itemize}
\item[ ]$(\liminf)$ if $u_{\e} \rightarrow u$ in the sense of Definition \ref{def:conv} then
	\[
	\liminf_{\e \rightarrow 0} \F_{\e} (u_{\e} ;Q)\geq \Xi \int_Q |\nabla u(x)|^2\ d x;
	\]
	\item[ ]$(\limsup)$  for any $u\in W^{1,2}(Q)$ there exists a sequence $\{u_{\e}\in L^2(Q;\eta_{\e} )\}_{\e>0}$ such that $u_{\e} \rightarrow u$ in the sense of Definition \ref{def:conv} and
		\[
		\limsup_{\e \rightarrow 0} \F_\e(u_{\e};Q)\leq  \Xi \int_Q |\nabla u(x)|^2\ d x.
		\]
\end{itemize}
The constant $\Xi$ is identified by the relation
	\[
	\Xi:=\lim_{T\rightarrow +\infty} \frac{\mathrm{m}(\xi;Q_T)}{T^2 |\xi|^2}
	\]
where, for any open set $A$, $\mathrm{m}(\cdot;A)$ denotes the {\em cell problem}
	\begin{equation}\label{eqn:cellProb}
	\mathrm{m}(\xi ;A):=\inf\left\{ \sum_{x\in \eta \cap A} \sum_{y\in \eta\cap  B_\lambda(x)} |v(x)-v(y)|^2 \ \left| \begin{array}{c}
	v:\eta\rightarrow \R\\
 \ v(x)=\xi \cdot x  \ \ \text{\rm for all $x\in \eta$ such that}  \\ 
 \mathrm{dist}(x,\partial A)\leq 2\lambda
\end{array}	\right. \right\}.
	\end{equation}
\end{theorem}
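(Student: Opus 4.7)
The argument splits into three steps: (i) almost-sure existence and determinism of the limit defining $\Xi$; (ii) the $\liminf$ inequality via the Fonseca--M\"uller blow-up method; (iii) construction of a recovery sequence, first for affine targets and then by density. For (i), I would verify that for each fixed $\xi$ the set function $A\mapsto \mathrm{m}(\xi;A)$ (evaluated on dyadic cubes) is subadditive on essentially disjoint sets -- two competitors that are affine equal to $\xi\cdot x$ on their $2\lambda$-collar can be glued together without extra interaction -- stationary under integer translations by stationarity of $\eta$, and uniformly bounded by $C|A||\xi|^2$ by using $\xi\cdot x$ itself as a competitor. An Akcoglu--Krengel-type subadditive ergodic theorem, together with the ergodicity of the Poisson point process, then yields the almost-sure deterministic limit $\Xi|\xi|^2=\lim_T \mathrm{m}(\xi;Q_T)/|Q_T|$. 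Quadratic homogeneity of the energy accounts for the $|\xi|^2$ factor, and isotropy of $\eta$ under rotations of $\R^2$ forces $\Xi$ to be independent of $\xi/|\xi|$, giving a single deterministic constant.

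\textbf{Lower bound.} Given $u_\e\to u$ with bounded energy, Theorem \ref{thm:Compactness} yields $u\in W^{1,2}(Q)$. I would associate to each $u_\e$ the nonnegative Radon measure
\[
\mu_\e:=\sum_{x\in\eta_\e\cap Q}\Bigl(\sum_{y\in\eta_\e\cap B_{\lambda\e}(x)}|u_\e(x)-u_\e(y)|^2\Bigr)\delta_x,
\]
so that $\mu_\e(Q)=\F_\e(u_\e;Q)$, extract a weak-$*$ limit $\mu$, and reduce to showing $d\mu/d\L^2(x_0)\ge\Xi|\nabla u(x_0)|^2$ at $\L^2$-a.e.\ $x_0$. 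At a joint Lebesgue-and-differentiability point I would blow up at scale $\rho$: setting $v_\e^\rho(z):=\rho^{-1}(u_\e(x_0+\rho z)-u(x_0))$, defined on $\rho^{-1}(\eta_\e-x_0)\cap Q_1$, the rescaled energy equals $\rho^{-2}\F_\e(u_\e;Q_\rho(x_0))$ and the cloud has effective spacing $\e/\rho$. A diagonal extraction together with the grid-averaged convergence of Definition \ref{def:conv} forces $v_\e^\rho\to\xi\cdot z$ with $\xi=\nabla u(x_0)$. A cut-off supported on a regular path from $G_{\e,t}$ (existing by Theorem \ref{thm:pathConn}) that is locally constant on the $3\lambda\e$-neighbourhood of each non-regular Voronoi cell then produces an admissible competitor for $\mathrm{m}(\xi;Q_{\rho/\e})$ whose energy differs from $\F_\e(u_\e;Q_\rho(x_0))$ by a boundary-layer error controlled via Corollary \ref{cor:est}, yielding the bound $\ge\Xi|\xi|^2$.

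\textbf{Upper bound.} For affine $u(x)=\xi\cdot x$, choose $T_\e\to+\infty$ with $\e T_\e\to 0$, let $v_\e$ be an $o(T_\e^2)$-minimizer of $\mathrm{m}(\xi;Q_{T_\e})$, and define $u_\e$ on $\e\eta\cap Q$ by tiling $Q$ into squares of side $\e T_\e$ and setting $u_\e(x)=\e\,v_\e(x/\e-z)+\xi\cdot z$ on the tile centered at $z\in\e T_\e\Z^2$. The affine matching at tile boundaries kills the cross-tile interactions and yields $\F_\e(u_\e;Q)\le (\e T_\e)^{-2}\mathrm{m}(\xi;Q_{T_\e})\cdot|Q|+o(1)\to\Xi|\xi|^2$. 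Convergence in the sense of Definition \ref{def:conv} follows because on any regular $t$-grid the averages of $u_\e$ on squares $Q^t_{i,j}$ differ from those of $\xi\cdot x$ by vanishing fluctuations. The construction extends to continuous piecewise-affine $u$ by patching on a scale much finer than the mesoscopic parameter $t$, again using a cut-off on regular paths to glue without spurious boundary energy, and then to arbitrary $u\in W^{1,2}(Q)$ by density of piecewise-affine functions and a diagonal argument, with Corollary \ref{cor:est} controlling the excess energy of the approximants.

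\textbf{Main obstacle.} The principal difficulty, present in both halves, is the surgery required to impose affine boundary data across regions where non-regular Voronoi cells may cluster: the interaction radius $\lambda\e$ is macroscopically larger than the typical spacing of $\e\eta$, so an uncontrolled collar of width $2\lambda\e$ could a priori carry order-one spurious energy. The resolution is the existence -- granted by Theorem \ref{thm:pathConn} -- of cut-off functions built along the regular paths of $G_{\e,t}$ that are identically constant on the $3\lambda\e$-neighbourhood of each non-regular cell, have slope controlled by $\Upsilon/t$ along the path, and realize the affine datum outside the collar. This percolation scaffolding is precisely what converts the geometric irregularity of the Poisson cloud into a negligible boundary correction and makes the matching between the cell problem and the interior energy viable.
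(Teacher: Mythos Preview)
Your three-part strategy mirrors the paper's, but two related gaps deserve attention, both stemming from the fact that a Poisson cloud is not periodic.

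In the upper bound you propose a \emph{single} near-minimizer $v_\e$ of $\mathrm{m}(\xi;Q_{T_\e})$ and tile $Q$ by translating it. This does not typecheck: for $x\in\e\eta$ in the tile centered at $z$, the point $x/\e-z$ lies in the translated cloud $\eta-z/\e$, not in $\eta$, so $v_\e$ is simply undefined there. The paper instead takes, for each tile $Q_{1/m}(J)$, a separate near-minimizer of $\mathrm{m}(\xi;\e^{-1}Q_{(1-\delta)/m}(J))$, i.e.\ of the cell problem posed on the appropriate \emph{translated} cube inside the single realization $\eta$. The affine boundary collar then glues the pieces. For the energy estimate to close you need $\e^2\mathrm{m}(\xi;\e^{-1}Q_{(1-\delta)/m}(J))\to\Xi|\xi|^2(1-\delta)^2/m^2$ for each (fixed) $J$; this is not the plain Akcoglu--Krengel limit at the origin but a translated version.

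The same issue bites in the lower bound. After blow-up at $x_0$ and boundary fixing, your competitor is admissible for $\mathrm{m}(\xi;Q_{\rho/\e}(x_0/\e))$, a cell problem on a cube whose center $x_0/\e$ runs off to infinity with $\e$. To recover $\Xi|\xi|^2$ you need the \emph{uniform} subadditive ergodic theorem of Krengel--Pyke, which gives $\mathrm{m}(\xi;Q_T(y_T))/T^2\to\Xi|\xi|^2$ along translates with $|y_T|\le T\,g(|y_T|)$ for some diverging $g$; the paper isolates this as Proposition~\ref{prop:ex} and then carefully chooses $\rho_n$ so that $|x_0|/\rho_n\le g(|x_0|/\e_n)$. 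Your sketch omits this uniformity, and without it the liminf argument does not reach the deterministic $\Xi$. Once these two points are addressed your outline coincides with the paper's proof.
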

This theorem will be proved in Section \ref{sct:gamma}.  More precisely,  in Section \ref{sbsct:cellP} we will prove the relation between $\Xi$ and cell problem \eqref{eqn:cellProb}, in Section \ref{sbsct:liminf}  the  lower bound and finally in Section \ref{sbsct:limsup} the upper bound. 

\section{Proof of the compactness theorem}\label{sct:comp}
This section is entirely devoted to the proof of the compactness Theorem \ref{thm:Compactness}.
We first need a few technical lemmas in order to guarantee that the convergence of Definition \ref{def:conv} is well defined and that any sequence with bounded energy is pre-compact. 
\subsection{Preliminary lemmas} 
For $u\in  L^2(Q;\eta_\e)$ we adopt the shorthand
	\[
	D_{\e} u(x):=\sum_{y\in\eta_\e\cap  B_{\lambda\e}(x)} |u(y)-u(x)|^2.
	\]
For any $\e, t$ fixed we set
	\begin{align*}	\mathrm{Hor}(i;G_{\e,t})&:=\{\mathbf{h}^i_1,\ldots, \mathbf{h}^i_{M_{\e, t}}\}\\	\mathrm{Ver}(j;G_{\e,t})&:=\{\mathbf{v}^j_1,\ldots, \mathbf{v}^j_{M_{\e,t}}\}.
\end{align*}	
Next lemma ensures that we can choose a ``skeleton" of $G_{\e,t}$ connecting two neighboring squares and carrying small energy  (see Figure \ref{fig:skeleton}). This lemma is stated and proved for horizontal neighboring squares but it holds also for vertical neighboring squares with obvious changes in the proof.   It will be used in the proof of Theorem \ref{thm:Compactness} (in particular in the proof of Lemma \ref{lem:localEstimate}). 

\begin{figure}
\includegraphics[scale=1]{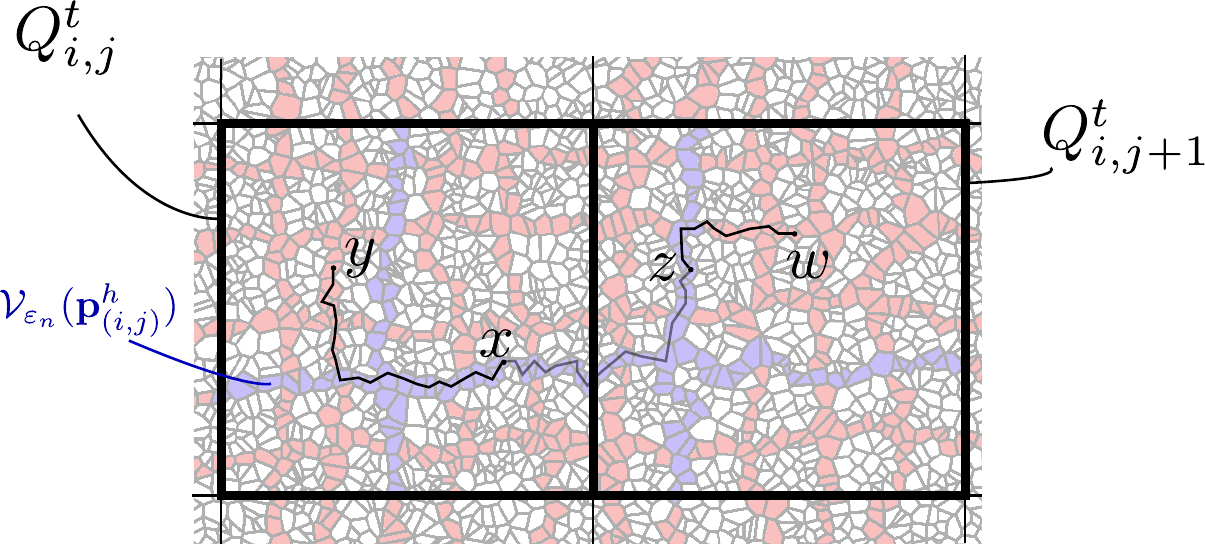}\label{fig:skeleton}
\caption{The ``skeleton" obtained in Lemma \ref{lem:ref}.  The blue part is carrying less energy than the grid (in red).  In particular this allows to link each $y\in Q^{t}_{i,j}$ to $w\in Q^{t}_{i,j+1}$ on particular paths that follows $\mathbf{p}_{(i,j)}^h$. Again, for illustrative reason we are depicting the Voronoi cells of the point clouds. }
\end{figure}

\begin{lemma}\label{lem:ref}
Fix $\e>0$.  Let $t\in \R_+$, $ G_{\e,t} \in \mathcal{G}_{t}(\Upsilon;\eta_\e)$ and $u\in L^2(Q;\eta_\e)$.  For all $Q_{i,j}^t, Q_{i,j+1}^t$ (horizontal) neighboring squares there exists two vertical paths and a horizontal path
\[
\bar{\mathbf{v}}^{j} \in \mathrm{Ver}( j; G_{\e ,t}),\quad \bar{\mathbf{v}}^{j+1} \in \mathrm{Ver}( j+1; G_{\e,t}), \quad\bar{\mathbf{h}}^{i} \in \mathrm{Hor}( i; G_{\e ,t})
\]
such that, setting
\begin{align*}
\mathbf{p}^h_{i,j}&:= (\bar{\mathbf{v}}^{j}\cap Q_{i,j}^t) \cup(\bar{\mathbf{v}}^{j
+1}\cap Q_{i,j+1}^t)\cup (\bar{\mathbf{h}}^{i}\cap (Q_{i,j}^t\cup Q_{i,j+1}^t ) ),
\end{align*}
we have
	\begin{align*}
	\sum_{x \in \p^h_{(i,j)}} D_\e u( x) &\leq  \frac{\Upsilon \e}{t} \F_{\e }(u;(Q_{i,j}^t\cup Q_{i,j+1}^t ) ).
	\end{align*}
\end{lemma}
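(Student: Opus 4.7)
The proof is a pigeonhole/averaging argument exploiting two essential features of a regular grid $G_{\e,t}\in\mathcal{G}_t(\Upsilon;\eta_\e)$: the paths in a given rectangle are mutually separated by at least $3\lambda\e$ (property (e) of Definition \ref{def:pathConn}), and each rectangle contains at least $t/(\Upsilon\e)$ paths (the lower bound in \eqref{eqn:unifBnd}). The idea is that, since a point on a path $\mathbf{v}^{j}_m$ can only see neighbors within distance $\lambda\e$, and any other path $\mathbf{v}^{j}_{m'}$ sits at distance at least $3\lambda\e$, the sets $\eta_\e\cap \mathbf{v}^j_m$ are pairwise disjoint as $m$ varies, and no $\lambda\e$-pair is shared between two distinct paths in the same family. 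This lets us split the total energy across the $M_{\e,t}$ paths and keep the cheapest one.

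Concretely, I would first prove the following averaging statement. Fix a column $j$ and the square $Q^t_{i,j}$. For each $m=1,\ldots,M_{\e,t}$, set
\[
E^v_m := \sum_{x\in \mathbf{v}^j_m \cap Q^t_{i,j}} D_\e u(x).
\]
Because the paths in $\mathrm{Ver}(j;G_{\e,t})$ are pairwise disjoint as sets of points, summing over $m$ gives
\[
\sum_{m=1}^{M_{\e,t}} E^v_m \;=\; \sum_{x\in \bigl(\bigcup_m \mathbf{v}^j_m\bigr)\cap Q^t_{i,j}} D_\e u(x) \;\leq\; \mathcal{F}_\e(u; Q^t_{i,j}),
\]
since $\bigcup_m \mathbf{v}^j_m\cap Q^t_{i,j}\subseteq \eta_\e\cap Q^t_{i,j}$ and $D_\e u(x)$ already equals the full inner sum in the definition of $\mathcal{F}_\e$. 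By pigeonhole there exists an index $m^\star_j$ with
\[
E^v_{m^\star_j} \;\leq\; \frac{1}{M_{\e,t}}\,\mathcal{F}_\e(u; Q^t_{i,j}) \;\leq\; \frac{\Upsilon\e}{t}\,\mathcal{F}_\e(u; Q^t_{i,j}),
\]
using $M_{\e,t}\geq t/(\Upsilon\e)$. Call the corresponding path $\bar{\mathbf{v}}^j$.

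I would then repeat the same pigeonhole, once for the column $j+1$ to select $\bar{\mathbf{v}}^{j+1}\in \mathrm{Ver}(j+1;G_{\e,t})$ with
\[
\sum_{x\in \bar{\mathbf{v}}^{j+1}\cap Q^t_{i,j+1}} D_\e u(x) \;\leq\; \frac{\Upsilon\e}{t}\,\mathcal{F}_\e(u; Q^t_{i,j+1}),
\]
and once for the row $i$, averaging over the $M_{\e,t}$ horizontal paths, to select $\bar{\mathbf{h}}^i\in \mathrm{Hor}(i;G_{\e,t})$ with
\[
\sum_{x\in \bar{\mathbf{h}}^{i}\cap (Q^t_{i,j}\cup Q^t_{i,j+1})} D_\e u(x) \;\leq\; \frac{\Upsilon\e}{t}\,\mathcal{F}_\e\bigl(u; Q^t_{i,j}\cup Q^t_{i,j+1}\bigr).
\]
Summing the three estimates and using subadditivity of $\mathcal{F}_\e$ on disjoint sets (Remark \ref{rmk:subadditivity}) yields the claimed bound, up to a universal multiplicative factor (at most $3$) that can be absorbed by enlarging the constant, consistently with the convention that $\Upsilon$ denotes a generic bound depending only on the grid parameters.

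\textbf{Main obstacle.} The only subtlety is the disjointness step: one must be sure that the set union over $m$ of $\mathbf{v}^j_m\cap Q^t_{i,j}$ is an honest disjoint union of points of $\eta_\e$, so that the sum of $D_\e u(x)$ over this union does not overshoot $\mathcal{F}_\e(u;Q^t_{i,j})$. This is exactly what property (e) of Definition \ref{def:pathConn} guarantees, since $3\lambda\e>0$ is strictly positive. No other non-trivial ingredient is needed; all remaining bookkeeping is formal.
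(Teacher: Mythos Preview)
Your proposal is correct and follows essentially the same approach as the paper's proof: both arguments use the disjointness of the paths (property (e)) to bound the sum over all $M_{\e,t}$ paths by the full energy on the relevant squares, and then apply pigeonhole together with $M_{\e,t}\geq t/(\Upsilon\e)$ to extract a single good path in each family. The only cosmetic difference is that the paper selects a single index $m$ minimizing the \emph{sum} of the two vertical contributions simultaneously, whereas you select the two vertical paths independently; both lead to the same bound up to the same harmless factor you already noted.
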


\begin{proof}
Since both horizontal and vertical paths are all disjoint we have that
	\begin{align*}
	\sum_{m=1}^{M_{\e,t}} \sum_{x \in \mathbf{h}^i_m \cap (Q_{i,j}^t\cup Q_{i,j+1}^t) } D_\e u (x)& \leq \F_{\e}(u;(Q_{i,j}^t\cup Q_{i,j+1}^t) )\\
		\sum_{m=1}^{M_{\e,t}} \sum_{x \in\mathbf{v}^{j}_m \cap Q_{i,j}^t} D_\e u (x)+\sum_{m=1}^{M_{\e,t}} \sum_{x \in \mathbf{v}^{j+1}_m \cap Q_{i,j+1}^t} D_\e u (x)& \leq \F_{\e}(u;Q_{i,j}^t)+ \F_{\e}(u;Q_{i,j+1}^t ).
	\end{align*}
In particular there are three paths $\bar{\mathbf{h}}^i\in \mathrm{Hor}(i;G_{\e,t})$, $\bar{\mathbf{v}}^i\in \mathrm{Ver}(j;G_{\e,t}) , \bar{\mathbf{v}}^{j+1}\in \mathrm{Ver}(j+1;G_{\e,t}) $
 such that
	\begin{align*}
	 \sum_{x \in \bar{\mathbf{h}}^i \cap (Q_{i,j}^t\cup Q_{i,j+1}^t) } D_\e u (x) &\leq \frac{1}{M_{\e,t}} \F_\e(u;Q_{i,j}^t\cup Q_{i,j+1}^t)\\
	 	 \sum_{x \in \bar{\mathbf{v}}^j \cap Q_{i,j}^t} D_\e u(x)+  \sum_{x \in \bar{\mathbf{v}}^{j+1} \cap Q_{i,j+1}^t} D_\e u(x)  &\leq \frac{1}{M_{\e, t}}[\F_{\e}(u;Q_{i,j}^t)+ \F_{\e}(u;Q_{i,j+1}^t)]
	\end{align*}
Note that, by the properties of the grid we have
$ \frac{1}{M_{\e,t} }\leq \Upsilon \frac{\e}{t}$.
Therefore, we conclude.
\end{proof}
We now provide a lemma that allows to estimate the difference of $T^{G_{\e,t}}(u_{\e})$, defined as in \eqref{eqn:operatore},  between neighboring squares of the squares partition $\mathcal{Q}_t$. Once again, we limit ourselves to prove the statement for horizontal neighboring squares since the vertical case follows in the same way up to changing the notation accordingly.
\begin{lemma}\label{lem:localEstimate}
There exists a constant $C>0$ independent of $\e$ and $t$ such that for any $ G_{\e,t}\in \mathcal{G}_{t}(\Upsilon;\eta_\e)$   any $u \in L^2(Q;\eta_\e)$ and any pair $Q_{i,j}^t,Q_{i,j+1}^t$ of  neighboring squares there holds
	\[
	\Bigl|(u)^{G_{\e,t}}_{i,j} - (u)^{G_{\e, t}}_{i,j+1}\Bigr|^2\leq C \F_\e(u;Q_{i,j}^t\cup Q_{i,j+1}^t).
	\] 
\end{lemma}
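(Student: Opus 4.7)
The plan is to turn the difference of averages into a double sum of pointwise differences of $u$ and to control each pointwise difference by a Cauchy--Schwarz estimate along a discrete walk built from the low-energy skeleton $\p^h_{i,j}$ supplied by Lemma~\ref{lem:ref}.

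Set $N_{i,j}=\eta_{\e}(G_{\e,t}\cap Q^t_{i,j})$. Jensen's inequality applied to $x\mapsto|x|^{2}$ and to the uniform probability on pairs of grid points yields
\[
\bigl|(u)^{G_{\e,t}}_{i,j}-(u)^{G_{\e,t}}_{i,j+1}\bigr|^{2} \le \frac{1}{N_{i,j}N_{i,j+1}} \sum_{\substack{y\in G_{\e,t}\cap Q^t_{i,j}\\ w\in G_{\e,t}\cap Q^t_{i,j+1}}} |u(y)-u(w)|^{2},
\]
reducing matters to pointwise bounds on $|u(y)-u(w)|^{2}$. For each pair $(y,w)$ I would build a discrete walk $\gamma(y,w)\subset\eta_\e$ as follows. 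The point $y$ lies on some horizontal or vertical grid path $P_y\in\mathrm{Hor}(i;G_{\e,t})\cup\mathrm{Ver}(j;G_{\e,t})$; inside $Q^t_{i,j}$ this path and the transverse skeleton path (i.e.\ $\bar{\v}^{j}$ or $\bar{\h}^{i}$) both span opposite sides of the square, so by a planar crossing argument together with the $\lambda\e$-connectedness in (g) they share a point or contain two points within $\lambda\e$ of each other. Walking along $P_y$ one therefore reaches $\p^h_{i,j}$ in at most $\Upsilon t/\e$ steps (by the upper bound in (d)), possibly plus one extra $\lambda\e$-jump at the splice. The symmetric construction gives an entry walk from $w$ to $\p^h_{i,j}$ in $Q^t_{i,j+1}$, and concatenation with the portion of the skeleton joining the two entry points produces $\gamma(y,w)$ of length $L\le C\,t/\e$ whose consecutive vertices are $\lambda\e$-close.

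Cauchy--Schwarz along $\gamma(y,w)$, using that each edge squared difference is dominated by $D_\e u$ at one of its endpoints, gives $|u(y)-u(w)|^{2}\le L\sum_{x\in\gamma(y,w)}D_\e u(x)$. Plugging into the Jensen estimate and exchanging the order of summation, I split the sum by the location of $x$. Vertices $x\in\p^h_{i,j}$ appear in every pair, contributing $L\sum_{x\in\p^h_{i,j}}D_\e u(x)\le C\,\F_\e(u;Q^t_{i,j}\cup Q^t_{i,j+1})$ thanks to the cancellation $L\cdot\tfrac{\Upsilon\e}{t}=O(1)$ built into Lemma~\ref{lem:ref}. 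Vertices lying on an entry sub-walk in $Q^t_{i,j}$ appear in at most $\ell(P_y)\cdot N_{i,j+1}$ pairs; combining $\ell(P_y),L\le\Upsilon t/\e$ with the lower bound $N_{i,j}\ge (t/(\Upsilon\e))^{2}$ (the $M_{\e,t}\ge t/(\Upsilon\e)$ horizontal paths in row $i$ are pairwise $3\lambda\e$-separated by (e) and each contributes length $\ge t/(\Upsilon\e)$ inside $Q^t_{i,j}$) gives a uniformly bounded prefactor $L\ell(P_y)/N_{i,j}$, and the entry contribution is bounded by $C\,\F_\e(u;Q^t_{i,j})$. The symmetric argument for $Q^t_{i,j+1}$ closes the estimate with a constant $C$ depending only on $\Upsilon$.

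The delicate step is the construction of $\gamma(y,w)$: one must ensure that whenever $y$ lies on, say, a horizontal grid path inside $Q^t_{i,j}$, this path actually meets (or comes within $\lambda\e$ of) the transverse skeleton path $\bar{\v}^{j}$ inside the very same square, so that the two $\lambda\e$-connected discrete walks can be spliced into one. This rests on a planar topological crossing argument combined with property~(g) and the length bounds of~(d); the rest is combinatorial bookkeeping which balances cleanly only because $N_{i,j}\sim(t/\e)^{2}$ matches the product $L\cdot\ell(P_y)\sim(t/\e)^{2}$ of path-lengths.
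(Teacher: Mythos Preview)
Your argument is correct and follows the same strategy as the paper's proof: route each comparison through the low-energy skeleton $\p^h_{i,j}$ of Lemma~\ref{lem:ref}, use Cauchy--Schwarz along discrete walks of length $O(t/\e)$, and balance the counting against $N_{i,j}\gtrsim (t/\e)^2$. The only organizational difference is that the paper inserts the intermediate skeleton averages $(u)^{\p}_{i,j}$ and $(u)^{\p}_{i,j+1}$ and proves the three comparisons $|(u)^{G}_{i,j}-(u)^{\p}_{i,j}|$, $|(u)^{\p}_{i,j}-(u)^{\p}_{i,j+1}|$, $|(u)^{\p}_{i,j+1}-(u)^{G}_{i,j+1}|$ separately, whereas you do a single Jensen step and build one walk $\gamma(y,w)$ per pair; the underlying path families and multiplicity bounds are identical. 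Two small remarks: your ``$\ell(P_y)$'' in the multiplicity count should really read ``the length of the (at most two) grid paths through $x$'', and the splice you flag as delicate is handled exactly as you suggest---crossing of the Voronoi tubes forces either a shared point or Voronoi-adjacent points of the two paths, which (since both lie in $\e\eta^{\alpha}(\lambda)$ with cell diameter $<\lambda\e/2$) are within $\lambda\e$; the paper glosses over this in the phrase ``until we meet $\h^i_m$''.
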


\begin{proof}
Let $\bar{\mathbf{h}}^i,\bar{\mathbf{v}}^j,\bar{\mathbf{v}}^{j+1} $ be the paths given by Lemma \ref{lem:ref}. Let $\mathbf{p}:=\mathbf{p}_{i,j}^h$ and let 
	\begin{align*}
	(u)_{i,j}^{\mathbf{p}}:=\frac{1}{\eta_n(\mathbf{p}\cap Q_{i,j}^t)} \sum_{x\in \mathbf{p}\cap Q_{i,j}^t} u(x), \qquad	(u)_{i,j+1}^{\mathbf{p}}:=\frac{1}{\eta_\e (\mathbf{p}\cap Q_{i,j+1}^t)} \sum_{x\in \mathbf{p}\cap Q_{i,j+1}^t} u(x).
	\end{align*}
We will make use of the above quantities (namely the average of the function $u$ on the skeleton $\mathbf{p}$) to estimate $(u)^{G_{\e,t}}_{i,j}$. \smallskip
	
	For any $x\in \mathbf{p}\cap Q_{i,j}^t$ we now build a family of paths $\mathcal{P}_{i,j}(x)$ which links $x$ to all points $y\in  G_{\e,t} \cap Q_{i,j}^t$.  The construction will be such that  we may control the number of times a path path passes through a point $y\in  G_{\e,t} \cap Q_{i,j}^t$.  We say that $y\in  G_{\e,t} \cap Q_{i,j}^t$ is a \textit{horizontal point} if it belongs to some horizontal path $\mathbf{h} \in \mathrm{Hor}(i;G_{\e,t})$ and does not belongs to any  $\mathbf{v}\in \mathrm{Ver}(j;G_{\e,t})$. We say instead that it is \textit{vertical} if the converse happens.  We say that it is \textit{nodal} if it belongs to the intersection of horizontal and vertical paths. We briefly describe the construction: if $y$ is horizontal, say it belongs to $\mathbf{h}^i_m$ the $m$-th horizontal path, then we consider the path starting from $x$, following $\mathbf{p}$ until we meet $ \mathbf{h}^i_m$ and then following $\mathbf{h}^i_m$ until we reach $y$.  If instead $y$ is vertical and belongs to $\mathbf{v}^j_l$ we start from $x$,  follow $\mathbf{p}$ until we meet $\mathbf{v}^j_l$ and then follow $\mathbf{v}^j_l$ to reach  $y$. If it is nodal we follow any of the two possibilities. This family of paths, call it $\mathcal{P}_{i,j}(x)$, have the following property, which is crucial in what follows. For any $x\in \mathbf{p}\cap Q_{i,j}^ t$, each point $y\in (G_{\e,t}\cap  Q_{i,j}^t)\setminus \mathbf{p}$ belongs to not more than $\frac{C t}{\e}$ paths $\mathbf{t}\in \mathcal{P}_{i,j}(x)$.  Indeed, if  $y\in \mathbf{h}_m^i$ then it belongs only to the paths with initial points in $\mathbf{h}_m^i$, which do not exceed $Ct/ \e$.  Moreover, the length of each path does not exceed $Ct/\e$ as well. 
	
	 We now divide the rest of the proof in two steps.  \smallskip

\textbf{Step one:} \textit{Comparison between $(u)_{i,j}^{\mathbf{p}}$ and $(u)_{i,j}^{G_{\e,t}}$}.  For the sake of shortness set
	\[
	N_j:= \eta_\e(\mathbf{p}\cap Q_{i,j}^t), \ \ N_j':= \eta_\e(G_{\e,t}\cap Q_{i,j}^t)
	\]
Then, by using Jensen's inequality twice and property (g) of Definition \ref{def:pathConn} we have
	\begin{align*}
	\left|(u)_{i,j}^{G_{\e,t}}-(u)_{i,j}^{\mathbf{p}}\right|^2\leq&\frac{1}{N_j N_j'} \sum_{x\in \mathbf{p}\cap Q_{i,j}^t} \sum_{y\in G_{\e,t}\cap Q_{i,j}^t} |u(x)-u(y)|^2\\
	\leq&\frac{C}{N_j N_j'}\sum_{x\in \mathbf{p}\cap Q_{i,j}^t} \sum_{\mathbf{t}\in \mathcal{P}_{i,j}(x)} \ell(\mathbf{t})\sum_{y\in \mathbf{t}} D_{\e}u(y)
\end{align*}
By invoking property (d) we can further deduce
\begin{align*}
	\left|(u)_{i,j}^{G_{\e,t}}-(u)_{i,j}^{\mathbf{p}}\right|^2\leq&\frac{C t}{\e N_j N_j'}\sum_{x\in \mathbf{p}\cap Q_{i,j}^t}  \sum_{y\in G_{\e,t}\cap Q_{i,j}^t} D_{\e}u(y) \sum_{\mathbf{t}\in \mathcal{P}_{i,j}(x)} \ca_{\mathbf{t}}(y)\\
	\leq&\frac{Ct}{\e N_j N_j'}\sum_{x\in \mathbf{p}\cap Q_{i,j}^t}  \sum_{y\in (G_{\e,t}\cap Q_{i,j}^t)\setminus \mathbf{p}} D_{\e}u_n(y) \sum_{\mathbf{t}\in \mathcal{P}_{i,j}(x)} \ca_{\mathbf{t}}(y)\\
	&+\frac{Ct}{\e N_j N_j'}\sum_{x\in \mathbf{p}\cap Q_{i,j}^t}  \sum_{y\in  \mathbf{p}} D_{n}u_n(y) \sum_{\mathbf{t}\in \mathcal{P}_{i,j}(x)} \ca_{\mathbf{t}}(y)\\
	\leq&\frac{C t^2}{(\e)^2 N_j N_j'}\sum_{x\in \mathbf{p}\cap Q_{i,j}^t}  \sum_{y\in (G_{\e,t}\cap Q_{i,j}^t)\setminus \mathbf{p}} D_{\e}u(y) +\frac{Ct^3}{(\e)^3 N_j N_j'}\sum_{x\in \mathbf{p}\cap Q_{i,j}^t}  \sum_{y\in  \mathbf{p}} D_{\e}u(y)\\
	\leq & \frac{Ct^2}{\e^2 N_j'} \F_{\e}(u;Q_{i,j}^t)+\frac{Ct^3}{\e^3 N_j'}  \frac{\e}{t} \F_{\e}(u;Q_{i,j}^t\cup Q_{i,j+1}^t),
	\end{align*}
where the last inequality follows from the particular choice of $\mathbf{p}$ given by Lemma \ref{lem:ref}. Since the properties of the grid (property (d) of Theorem \ref{thm:pathConn}) also imply that
	\[
	N_j' \geq \frac{Ct^2}{\e^2},
	\]
we conclude that
	\begin{equation}\label{eqn:p1}
	\left|(u)_{i,j}^{G_{\e, t}}-(u)_{i,j}^{\mathbf{p}}\right|^2\leq C\F_{\e}(u;Q_{i,j}^t\cup Q_{i,j+1}^t).
	\end{equation}
The same exact computation shows also that
	\begin{equation}\label{eqn:p12}
	\left|(u)_{i,j+1}^{G_{\e,t}}-(u)_{i,j+1}^{\mathbf{p}}\right|^2\leq C\F_{\e}(u;Q_{i,j}^t\cup Q_{i,j+1}^t).
	\end{equation}
	\smallskip
	
\textbf{Step two:} \textit{Comparison between $(u)_{i,j+1}^{\mathbf{p}}$ and $(u)_{i,j}^{\mathbf{p}}$}. For $x\in \mathbf{p}\cap Q_{i,j}^t$ let $\mathcal{P}_{j+1}'(x)$ be the family of paths $\mathbf{t}$ on $\mathbf{p}$ which link each point of $\mathbf{p}\cap Q_{i,j+1}^t$ to $x$.  By considering one path for each $y\in \mathbf{p}\cap Q_{i,j+1}^t$ we can build $\mathcal{P}_{j+1}'(x)$ in a way that each $\mathbf{t}\in \mathcal{P}_{j+1}'(x)$ contains not more than $Ct/\e)$ points.  Moreover, each point $z\in \mathbf{p}$ is contained in at most $Ct/\e$ paths. With the notation introduced above, we then compute
\begin{align}
	\left|(u)_{i,j}^{\mathbf{p}}-(u)_{i,j+1}^{\mathbf{p}}\right|^2\leq& \frac{1}{N_j  N_{j+1}}\sum_{x \in\mathbf{p}\cap Q_{i,j}^t} \sum_{y\in\mathbf{p}\cap Q_{i,j+1}^t}|u(x)-u(y)|^2 \nonumber\\
	\leq& \frac{C}{N_j N_{j+1}}\sum_{x \in\mathbf{p}\cap Q_{i,j}^t} \sum_{\mathbf{t}\in \mathcal{P}'_{j+1}(x)}\ell(\mathbf{t}) \sum_{y\in\mathbf{t}} D_\e u(y) \nonumber\\
		\leq& \frac{Ct}{\e N_j N_{j+1}}\sum_{x \in\mathbf{p}\cap Q_{i,j}^t} \sum_{y\in\mathbf{p}\cap (Q_{i,j}^t\cup Q_{i,j+1}^t)} D_\e u(y) \sum_{\mathbf{t}\in \mathcal{P}'_{j+1}(x)} \ca_{\mathbf{t}}(y) \nonumber\\
		\leq& \frac{Ct^2}{\e^2 N_j N_{j+1}}\sum_{x \in\mathbf{p}\cap Q_{i,j}^t} \sum_{y\in\mathbf{p}\cap (Q_{i,j}^t\cup Q_{i,j+1}^t)} D_\e u(y) \nonumber\\
			\leq& C \sum_{x \in\mathbf{p}\cap Q_{i,j}^t} \frac{\e}{t} \F_\e(u;Q_{i,j}^t\cup Q_{i,j+1}^t) \nonumber\\
			\leq& C\F_\e(u;Q_{i,j}^t\cup Q_{i,j+1}^t).\label{eqn:p2}
\end{align}

 \textbf{Conclusion}. By means of Step one and Step two, in particular by collecting \eqref{eqn:p1}, \eqref{eqn:p12} and \eqref{eqn:p2} and by means of a triangular inequality we conclude.
\end{proof}

\subsection{Properties of the convergence for sequences with equibounded energy}
We now state and prove some useful properties of the convergence in Definition \ref{def:conv}. We start with the following lemma, which ensures that the limit of sequences with equibounded energy,  when it exists,  is unique and does not depend on the choice of the sequence of regular grids $\{G_{\e,t}\in \mathcal{G}_{t}(\Upsilon;\eta_\e)\}_{\e, t>0}$ when $\e,t\rightarrow 0$.
\begin{lemma}\label{lem:comparisongrid}
If $\{u_\e \in L^2(Q;\eta_\e)\}_{\e>0}$ is a sequence of function satisfying \eqref{eqn:CMPbnds} and $\{G_{\e,t}\in \mathcal{G}_{t}(\Upsilon;\eta_\e)\}_{\e,t>0},\{\bar{G}_{\e,t}\in \mathcal{G}_{t}(\Upsilon';\eta_\e) \}_{\e,t>0}$ (with possibly $ \Upsilon\neq \Upsilon'$) are two sequences of regular grids such that
	\[
	T^{G_{\e,t}}(u_\e)\stackrel{\e\to 0}{\longrightarrow } u^t, \ \ \ T^{\bar{G}_{\e,t}}(u_\e)\stackrel{\e\to 0}{\longrightarrow } \bar{u}^t
	\]
then
	\begin{align*}
\lim_{t\rightarrow  0} \int_Q \left|u^t(x)-\bar{u}^t(x)\right|^2\d x=0.
	\end{align*}
\end{lemma}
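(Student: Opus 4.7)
The plan is to bound
\[
\|u^t - \bar u^t\|^2_{L^2(Q_{tk_t})} = \lim_{\e\to 0} \sum_{i,j=1}^{k_t} t^2 \bigl|(u_\e)^{G_{\e,t}}_{i,j} - (u_\e)^{\bar G_{\e,t}}_{i,j}\bigr|^2
\]
by a quantity of order $t^{2}$, which then yields the conclusion by sending $t\to 0$. The square-by-square strategy is analogous to Lemma~\ref{lem:localEstimate}, but with the difference that, instead of comparing the average on two neighbouring squares for a single grid, we must compare the averages of two different grids on the same square $Q^t_{i,j}$.

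The geometric heart of the argument is the following \emph{crossing lemma}. Inside every $Q^t_{i,j}$, any vertical path $\mathbf{v}^j\in\mathrm{Ver}(j;G_{\e,t})$ must intersect, in the planar topological sense, any horizontal path $\bar{\mathbf{h}}^i\in\mathrm{Hor}(i;\bar G_{\e,t})$, because the former crosses $Q^t_{i,j}$ from top to bottom and the latter from left to right. Since both paths lie in $\e\eta^{\alpha}(\lambda)$, the Voronoi cells of their sites have diameter at most $\alpha^{-1}\e$; at any topological crossing we can thus find $p\in\mathbf{v}^j$ and $q\in\bar{\mathbf{h}}^i$ with $|p-q|\leq 2\alpha^{-1}\e\leq \lambda\e$, using the choice $\lambda>2/\alpha$ from Theorem~\ref{thm:pathConn}. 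Hence $p,q$ interact and $|u_\e(p)-u_\e(q)|^2\leq D_\e u_\e(p)$.

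Fix now $Q^t_{i,j}$. Invoking the pigeonhole argument of Lemma~\ref{lem:ref} for both grids separately, I choose $\mathbf{v}^j\in\mathrm{Ver}(j;G_{\e,t})$ and $\bar{\mathbf{h}}^i\in\mathrm{Hor}(i;\bar G_{\e,t})$ carrying energy at most $(\Upsilon+\Upsilon')\e/t$ times $\F_\e(u_\e;Q^t_{i,j})$, together with the associated ``skeletons'' that allow to compare the grid averages with the one-path averages
\[
(u_\e)^{\mathbf{v}^j}_{i,j}:=\frac{1}{\eta_\e(\mathbf{v}^j\cap Q^t_{i,j})}\sum_{x\in\mathbf{v}^j\cap Q^t_{i,j}}u_\e(x),\qquad (u_\e)^{\bar{\mathbf{h}}^i}_{i,j}:=\frac{1}{\eta_\e(\bar{\mathbf{h}}^i\cap Q^t_{i,j})}\sum_{y\in\bar{\mathbf{h}}^i\cap Q^t_{i,j}}u_\e(y).
\]
The proof of Lemma~\ref{lem:localEstimate}, Step one, applied separately to each grid, then gives
\[
\bigl|(u_\e)^{G_{\e,t}}_{i,j}-(u_\e)^{\mathbf{v}^j}_{i,j}\bigr|^2 + \bigl|(u_\e)^{\bar G_{\e,t}}_{i,j}-(u_\e)^{\bar{\mathbf{h}}^i}_{i,j}\bigr|^2 \leq C\F_\e(u_\e;Q^t_{i,j}).
\]
For the remaining difference $|(u_\e)^{\mathbf{v}^j}_{i,j}-(u_\e)^{\bar{\mathbf{h}}^i}_{i,j}|^2$ I route every pair $(x,y)\in (\mathbf{v}^j\cap Q^t_{i,j})\times(\bar{\mathbf{h}}^i\cap Q^t_{i,j})$ through the crossing $p\to q$ provided by the crossing lemma (following $\mathbf{v}^j$ from $x$ to $p$, jumping to $q$ via the single $\lambda\e$-interaction, then following $\bar{\mathbf{h}}^i$ from $q$ to $y$); each site on $\mathbf{v}^j\cup\bar{\mathbf{h}}^i$ is traversed by at most $Ct/\e$ such routes, all of length $\leq Ct/\e$. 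The Jensen-plus-telescoping computation of Step two in Lemma~\ref{lem:localEstimate} then delivers
\[
\bigl|(u_\e)^{\mathbf{v}^j}_{i,j}-(u_\e)^{\bar{\mathbf{h}}^i}_{i,j}\bigr|^2 \leq C\F_\e(u_\e;Q^t_{i,j}).
\]

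Summing over $(i,j)$, absorbing the bounded-multiplicity overlap on the enlargements $(Q^t_{i,j})_{\lambda\e}$, and using a triangle inequality yields
\[
\sum_{i,j=1}^{k_t}\bigl|(u_\e)^{G_{\e,t}}_{i,j}-(u_\e)^{\bar G_{\e,t}}_{i,j}\bigr|^2 \leq C\,\F_\e(u_\e;A),
\]
which, together with the uniform bound \eqref{eqn:CMPbnds}, gives $\|T^{G_{\e,t}}(u_\e)-T^{\bar G_{\e,t}}(u_\e)\|^2_{L^2(Q_{tk_t})}\leq C t^2$. Letting first $\e\to 0$ and then $t\to 0$ proves the lemma. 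The main obstacle is the crossing lemma itself: to make the two paths effectively ``talk'' we really need both the quantitative bound $\mathrm{diam}(C(x,\eta))<\alpha^{-1}\e$ on the Voronoi cells along the paths and the threshold condition $\lambda>2/\alpha$, which is exactly the condition built into the choice of $\eta^\alpha(\lambda)$ in Theorem~\ref{thm:pathConn}.
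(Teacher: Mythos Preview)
Your argument is correct and follows the same architecture as the paper: pick low-energy skeletons in each grid via Lemma~\ref{lem:ref}, compare the grid averages to the skeleton averages by Step one of Lemma~\ref{lem:localEstimate}, link the two skeletons inside each $Q^t_{i,j}$, bound the difference by $C\F_\e(u_\e;Q^t_{i,j})$, sum, and obtain the $Ct^2$ estimate.

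The only substantive difference is the crossing step. You prove a \emph{metric} crossing lemma: at the topological crossing of $\mathbf{v}^j$ and $\bar{\mathbf{h}}^i$ you locate $p\in\mathbf{v}^j$ and $q\in\bar{\mathbf{h}}^i$ with $|p-q|\le 2\alpha^{-1}\e\le\lambda\e$, so a single interaction joins the two paths. The paper instead uses a stronger combinatorial fact: both skeletons are nearest-neighbour paths in the \emph{same} Voronoi tessellation of $\eta_\e$, and by planarity of the Delaunay graph a top--bottom path and a left--right path in the same square must share a vertex, so $\mathbf{p}\cap Q^t_{i,j}$ and $\bar{\mathbf{p}}\cap Q^t_{i,j}$ actually have common points (no jump needed). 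Your version is slightly weaker but sufficient, and has the merit of making the role of the hypothesis $\lambda>2/\alpha$ explicit; the paper's version is cleaner but hides that dependence. The final estimate is identical in both cases.
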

\begin{proof}
Fix $Q_{i,j}^t\in \mathcal{Q}_t$ and let $\mathbf{p}$, $\bar{\mathbf{p}}$ be the union of paths given by Lemma \ref{lem:ref} relative to $Q_{i,j}^t$ (and one of its neighbors, say $Q_{i,j+1}^t$ without loss of generality) and to to the grid $G_{\e,t}, \bar{G}_{\e,t}$ respectively.  Now by construction we have that $\mathbf{p}\cap Q_{i,j}^t,\bar{\mathbf{p}}\cap Q_{i,j}^t$ share at least two points.  In particular, for any $x\in\mathbf{p}\cap Q_{i,j}^t$ we can still build a family $\mathcal{P}(x)$ of paths that link each $y\in \bar{\mathbf{p}}\cap Q_{i,j}^t$ to $x$ containing only points in $\mathbf{p}\cup \bar{\mathbf{p}}\cap Q_{i,j}^t$. We can also ensure that each $\mathbf{t}\in\mathcal{P}(x)$ contains not more than $Ct/\e$ points and that any point in $z\in \mathbf{p}\cup \bar{\mathbf{p}}\cap Q_{i,j}^t$ is contained in at most $Ct/\e$ paths in $\mathcal{P}(x)$. With the notation introduced in the proof of Lemma \ref{lem:localEstimate} we now compute
	\begin{align*}
\left| (u_\e)_{i,j}^{\mathbf{p}}-  (u_\e)_{i,j}^{\bar{\mathbf{p}}} \right|^2&\leq \frac{1}{N_j \bar{N}_j} \sum_{x\in \mathbf{p}\cap Q_{i,j}^t} \sum_{y\in \bar{\mathbf{p}}\cap Q_{i,j}^t} |u_\e(x)-u_\e(y)|^2\\
 &\leq \frac{1}{N_j \bar{N}_j} \sum_{x\in \mathbf{p}\cap Q_{i,j}^t} \sum_{\mathbf{t}\in \mathcal{P}(x)} \ell(\mathbf{t})\sum_{y\in \mathbf{t}} D_\e u(x)\\
  &\leq C \F_{\e}(u_\e;Q_{i,j}^t\cup Q_{i,j+1}^t),
\end{align*}
where we have used the properties of $\mathbf{p},\bar{\mathbf{p}}$ and of $\mathbf{t}\in \mathcal{P}(x)$. Now, by recalling that \eqref{eqn:p1},\eqref{eqn:p12} are in force respectively on $G_{\e,t},\mathbf{p}$ and $\bar{G_{\e,t}},\bar{\mathbf{p}}$ by means of the same arguments used to prove Step one of Lemma \ref{lem:localEstimate}, with a triangular inequality we obtain
	\begin{align*}
	\left|(u_\e)^{G_{\e,t}}_{i,j}- (u_\e)^{\bar{G}_{\e,t}}_{i,j}\right|^2\leq C \F_\e(u_\e;Q_{i,j}^t \cup Q_{i,j+1}^t).
	\end{align*}
If we sum up over all $i,j$ and we observe that the energy of each square is counted at most a finite number of time (independent of $t,n$) we reach
	\[
	\sum_{i,j=1}^{k_t} \left|(u_\e)^{G_{\e,t}}_{i,j}- (u_\e)^{\bar{G}_{\e,t}}_{i,j}\right|^2\leq C \F_\e(u_\e;Q_{tk_t})\leq C \F_\e(u_\e;A).
	\]
By means of this last relation we have
	\[
	\int_{Q}\left|T^{G_{\e,t}}(u_\e)(x)-T^{\bar{G}_{\e,t}}(u_\e)(x)\right|^2\d x=t^2\sum_{i,j=1}^{k_t} \left|(u_\e)^{G_{\e,t}}_{i,j}- (u_\e)^{\bar{G}_{\e,t}}_{i,j}\right|^2\leq C t^2
	\]
In particular, if $T^{G_{\e,t}}(u_\e)\stackrel{ _\e}{\longrightarrow} u^t$, $T^{\bar{G}_{\e,t}}(u_\e)\stackrel{ _\e}{\longrightarrow} \bar{u}^t$ then
\[
	\int_{Q}\left|u^t(x)-\bar{u}^t(x)\right|^2\d x\leq C t^2
	\]
and we conclude.
\end{proof}

Now we proceed to state and prove Proposition \ref{prop:L2ongrid} and Proposition \ref{prop:equivalence}, which will give us a useful characterization of the convergence in Definition \ref{def:conv}. 
We will make use of the notion of piecewise-constant extension introduced in Section \ref{sbsct:pwCextension}.

\begin{proposition}[$L^2$ convergence on the grids] \label{prop:L2ongrid}
Let $\{u_{\e}\in L^2(Q;\eta_{\e})\}_{\e>0}$ be a sequence  satisfying \eqref{eqn:CMPbnds} and assume that $u_\e\rightarrow u$ in the sense of Definition \ref{def:conv}. Then
\begin{equation}\label{eqn:keybounds2} 
		\lim_{t\rightarrow 0} \limsup_{\e\rightarrow 0} \int_{\mathcal{V}_{\e}(G_{\e,t}) \cap Q} |\hat{u}_\e(x)-u(x)|^2\d x=0
		\end{equation}
for any sequence of regular grids $ \{G_{\e,t}\in \mathcal{G}_{t}(\Upsilon;\eta_\e)\}_{\e,t>0}$. 
\end{proposition}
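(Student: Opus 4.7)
The plan is to split the integrand via the triangle inequality
\[
|\hat{u}_{\e} - u|^2 \leq 2|\hat{u}_{\e} - T^{G_{\e,t}}(u_{\e})|^2 + 2|T^{G_{\e,t}}(u_{\e}) - u|^2
\]
and estimate the two resulting integrals separately. The second piece is handled immediately by the assumption $u_{\e} \to u$: since $Q \subset Q_{tk_t}$, Definition \ref{def:conv} gives $T^{G_{\e,t}}(u_{\e}) \to u^t$ in $L^2(Q)$ as $\e \to 0$ with $u^t \to u$ in $L^2(Q)$ as $t \to 0$, so
\[
\lim_{t \to 0}\,\limsup_{\e \to 0} \int_{Q} |T^{G_{\e,t}}(u_{\e}) - u|^2 \d x = 0.
\]

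The heart of the proof is the bound on the first piece. On each Voronoi cell $C(y, \eta_{\e})$ with $y \in G_{\e,t} \cap Q^t_{i,j}$, $\hat{u}_{\e}$ equals $u_{\e}(y)$ while $T^{G_{\e,t}}(u_{\e})$ equals $(u_{\e})^{G_{\e,t}}_{i,j}$ (up to the small boundary effect due to the few $y$ whose Voronoi cell straddles two mesoscopic squares, which is easily absorbed). Since grid points lie in $\e\,\eta^{\alpha}(\lambda)$, one has $\mathrm{diam}(C(y, \eta_{\e})) \leq \e/\alpha$ and hence $|C(y, \eta_{\e})| \leq C\e^2$, so
\[
\int_{\mathcal{V}_{\e}(G_{\e,t}) \cap Q^t_{i,j}} |\hat{u}_{\e} - T^{G_{\e,t}}(u_{\e})|^2 \d x \leq C\e^2 \sum_{y \in G_{\e,t} \cap Q^t_{i,j}} \bigl|u_{\e}(y) - (u_{\e})^{G_{\e,t}}_{i,j}\bigr|^2.
\]
The central technical claim is the Poincar\'e-type inequality on the grid
\[
\sum_{y \in G_{\e,t} \cap Q^t_{i,j}} \bigl|u_{\e}(y) - (u_{\e})^{G_{\e,t}}_{i,j}\bigr|^2 \leq C \frac{t^2}{\e^2}\, \F_{\e}(u_{\e}; Q^{t,*}_{i,j}),
\]
where $Q^{t,*}_{i,j}$ denotes the union of $Q^t_{i,j}$ with its mesoscopic neighbours. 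Granting this, summing over $(i,j)$ with $Q^t_{i,j} \cap Q \neq \emptyset$, using the bounded overlap of the enlargements, and invoking \eqref{eqn:CMPbnds}, I obtain
\[
\int_{\mathcal{V}_{\e}(G_{\e,t}) \cap Q} |\hat{u}_{\e} - T^{G_{\e,t}}(u_{\e})|^2 \d x \leq C\,t^2 M,
\]
uniformly in $\e$, which vanishes as $t \to 0$, giving \eqref{eqn:keybounds2}.

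The main obstacle is the Poincar\'e-type inequality, which I plan to establish by reusing the path-counting machinery of Lemma \ref{lem:localEstimate}. Fix a skeleton $\mathbf{p}$ provided by Lemma \ref{lem:ref}. For each $x \in \mathbf{p} \cap Q^t_{i,j}$, the family $\mathcal{P}_{i,j}(x)$ constructed in the proof of Lemma \ref{lem:localEstimate} supplies, for every $y \in G_{\e,t} \cap Q^t_{i,j}$, a path in the grid joining $x$ to $y$ of length at most $Ct/\e$, with each grid point belonging to at most $Ct/\e$ such paths. Telescoping along the path together with Jensen's inequality and property (g) of Definition \ref{def:pathConn} yield
\[
\sum_{y \in G_{\e,t} \cap Q^t_{i,j}} |u_{\e}(y) - u_{\e}(x)|^2 \leq C \frac{t^2}{\e^2}\, \F_{\e}(u_{\e}; Q^{t,*}_{i,j}),
\]
and a further Jensen average over $x \in \mathbf{p} \cap Q^t_{i,j}$ controls $\sum_y |u_{\e}(y) - (u_{\e})^{\mathbf{p}}_{i,j}|^2$ by the same right-hand side. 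Finally, \eqref{eqn:p1} from Step one of Lemma \ref{lem:localEstimate} allows the replacement of $(u_{\e})^{\mathbf{p}}_{i,j}$ by $(u_{\e})^{G_{\e,t}}_{i,j}$ at a cost of $\eta_{\e}(G_{\e,t} \cap Q^t_{i,j}) \cdot C\F_{\e}(u_{\e}; Q^{t,*}_{i,j}) \leq C(t^2/\e^2)\F_{\e}(u_{\e}; Q^{t,*}_{i,j})$, completing the bound. I do not expect any genuinely new difficulty here, since the calculation is essentially the variance analogue of Lemma \ref{lem:localEstimate}; only the path-bookkeeping has to be arranged with the correct counting multiplicities.
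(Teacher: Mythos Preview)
Your proposal is correct and follows essentially the same approach as the paper: both arguments hinge on the Poincar\'e-type estimate $\e^2\sum_{y\in G_{\e,t}\cap Q}|u_\e(y)-T^{G_{\e,t}}(u_\e)(y)|^2\le Ct^2\F_\e(u_\e;A)$ derived via the path-counting of Lemma~\ref{lem:localEstimate} and Lemma~\ref{lem:ref} (the paper's inequality \eqref{eqn:avAndPw}), followed by the triangle inequality together with the convergence hypothesis. The only cosmetic difference is that the paper obtains the pointwise bound $|u_\e(x)-(u_\e)^{\mathbf p}_{i,j}|^2\le C\F_\e(u_\e;Q^t_{i,j}\cup Q^t_{i,j+1})$ for each $x\in G_{\e,t}\cap Q^t_{i,j}$ and then multiplies by $\#(G_{\e,t}\cap Q^t_{i,j})\le Ct^2/\e^2$, whereas you sum over $y$ first and average over $x\in\mathbf p$ afterwards; the counting is the same.
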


\begin{proof}
By means of similar computations as the ones used in the proof of Lemmas \ref{lem:localEstimate} and \ref{lem:comparisongrid}, we can infer also that for any $x\in G_{\e,t}\cap Q_{i,j}^t$ (adopting the same notation)
	\begin{align}
	\left|u_\e(x) - (u_{\e})_{i,j}^{\mathbf{p}}\right|^2\leq &\frac{C}{N_j}\sum_{y\in \mathbf{p}\cap Q_{i,j}^t} |u_\e(x)-u_\e(y)|^2\nonumber
	\leq \frac{C}{N_j}\sum_{\mathbf{t}\in \mathcal{P}_{i,j}(x)} \sum_{y\in \mathbf{t}} D_\e u(y)\nonumber\\
		\leq &\frac{C}{N_j}\sum_{y\in G_{\e,t}\cap Q_{i,j}^t} D_\e u_\e(y)\sum_{\mathbf{t}\in \mathcal{P}_{i,j}(x)}  \ca_{\mathbf{t}}(y)\nonumber\\
		\leq &\frac{C t }{N_j \e }\sum_{y\in G_{\e,t}\cap Q_{i,j}^t\setminus \mathbf{p}} D_\e u_\e(y)+ \frac{C t^2 }{N_j\e^2 }\sum_{y\in \mathbf{p}\cap Q_{i,j}^t} D_\e u_\e(y)\nonumber\\
		\leq &\frac{C t }{N_j \e }\sum_{y\in G_{\e,t}\cap Q_{i,j}^t\setminus \mathbf{p}} D_\e u_\e(y)+ \frac{C   t }{N_j \e }\F_\e (u_\e;Q_{i,j}^t\cup Q_{i,j+1}^t)\nonumber\\
		\leq & C \F_\e(u_\e;Q_{i,j}^t\cup Q_{i,j+1}^t) \label{eq:p3}
	\end{align}
for a constant independent of $t,\e$. In particular, by collecting \eqref{eq:p3} and \eqref{eqn:p1} we have
	\[
	\left|u_\e(x)-(u_\e)_{i,j}^{G_{\e,t}} \right|^2\leq C\F(u_\e;Q_{i,j}^t\cup Q_{i,j+1}^t),
	\]
which, summed up over $x\in G_{\e,t}$ and $i,j\in \{1,\ldots,k_t\}$, and taking into accounting property (d) of Definition \ref{def:pathConn} (implying that $\#(G_{\e,t}\cap Q_{i,j}^t)\leq C \sfrac{t^2}{\e^2}$ ),  yields
	\begin{align}\label{eqn:avAndPw}
	\e^2 \sum_{x\in G_{\e,t}\cap Q}\left|u_\e(x)-T^{G_{\e,t}}(u_\e)(x)\right|^2\leq C t^2 \F_{\e}(u_\e;A).
	\end{align}
In particular, if $u_\e \rightarrow u$, and  $\{u^t \in X_t \}_{t\in \R_+}$ denotes the sequence of intermediate functions with respect to $G_{\e,t}$, then
\begin{align*}
	t^2\sum_{J\in \mathcal{I}_t(Q)} |(u_\e)_{J}^{G_{\e,t}}- u_J^{t}|^2 \stackrel{ _\e}{\longrightarrow} 0
\end{align*}	
and thus
	\begin{align*}
		\e^2\sum_{x \in G_{\e,t}\cap Q} |u_\e(x)- u^{t}(x)|^2 \leq & \e^2 \sum_{x \in G_{\e,t}\cap Q} \left|u_\e(x)- T^{G_{\e,t}}(u_\e)(x)\right|^2+C t^2 \sum_{J\in \mathcal{I}_t(Q)}  \left|  (u_\e)_{i,j}^{G_{\e,t}}-u_J^{t}\right|^2\\
		\leq & C t^2 \F_\e(u_\e;A)+ C t^2\sum_{J\in \mathcal{I}_t(Q)} \left| (u_\e)_{J}^{G_{\e,t}}-u_J^{t}\right|^2.
	\end{align*}
This implies that
	\begin{align*}
	\int_{\mathcal{V}_{\e}(G_{\e,t})\cap Q} |\hat{u}_\e(x)-u(x)|^2\d x &\leq\int_{\mathcal{V}_{\e}(G_{\e,t})\cap Q} |\hat{u}_\e(x)-u^t(x)|^2\d x +\int_{G_{\e,t}} |u^t(x)-u(x)|^2\d x\\
	&\leq C t^2 \left(1+ \sum_{J\in \mathcal{I}_t(Q)} \left| (u_\e)_{J}^{G_{\e,t}}-u_J^{t}\right|^2 \right)  + \int_Q |u^t(x)-u(x)|^2\d x
	\end{align*}
	and then also \eqref{eqn:keybounds2} holds for any sequence of regular grids $ \{G_{\e,t}\in \mathcal{G}_{t}(\Upsilon;\eta_\e)\}_{\e,t>0}$.
\end{proof}

\begin{proposition}\label{prop:equivalence}
Let $\e_n, t_n$ be two sequences. Set $\eta_n:=\eta_{\e_n}$. Let $\{u_n\in L^2(Q;\eta_n))\}_{n\in \N}$ be a sequence satisfying \eqref{eqn:unifBnd}.  If there exists a sequence of regular grids $\{G_n\in \mathcal{G}_{t_n}(\Upsilon;\eta_n)\}_{n\in \N}$ such that
	\begin{equation}\label{eqn:convOnL2one}
	\lim_{n\rightarrow +\infty} \int_{\mathcal{V}_{\e_n}(G_n)\cap Q} |\hat{u}_n(x)-u(x)|^2\d x=0.	
	\end{equation}
Then $u_n\rightarrow u$ in the sense of Definition \ref{def:conv}.
\end{proposition}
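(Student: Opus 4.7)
The strategy is to apply the Compactness Theorem to extract a subsequential limit $v$ of $u_n$ in the sense of Definition \ref{def:conv}, and then to show $v=u$ by identifying the coarse-grained averages $T^{G_n}(u_n)$ as the common $L^2(Q)$-limit of both functions. Once this is established for an arbitrary subsequential limit, the full sequence $u_n\to u$ follows by the standard subsequence argument. The energy bound \eqref{eqn:CMPbnds} allows us to invoke Theorem \ref{thm:Compactness}, which furnishes $v\in W^{1,2}(Q)$ with $u_n\to v$ in the sense of Definition \ref{def:conv} along a subsequence (not relabeled).

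The core step is to show $T^{G_n}(u_n)\to u$ in $L^2(Q)$ as $n\to\infty$ (with $t_n\to 0$). Estimate \eqref{eqn:avAndPw} applied to $G_n$, together with the fact that each Voronoi cell of $\mathcal{V}_{\e_n}(G_n)$ has area comparable to $\e_n^2$, gives
\[
\int_{\mathcal{V}_{\e_n}(G_n)\cap Q}\bigl|\hat u_n - T^{G_n}(u_n)\bigr|^2\d x \le C\, t_n^2\,\F_{\e_n}(u_n;A),
\]
whose right-hand side vanishes as $n\to\infty$ thanks to the uniform energy bound. Combined with the hypothesis \eqref{eqn:convOnL2one}, this yields $L^2$-convergence of $T^{G_n}(u_n)$ to $u$ on the subset $\mathcal{V}_{\e_n}(G_n)$. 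To upgrade it to $L^2(Q)$, I compare $T^{G_n}(u_n)$ to the $t_n$-scale Lebesgue average $\bar U^{t_n}$ of $u$ on the partition $\{Q^{t_n}_{i,j}\}$, using (i) the above bound, (ii) the grid density bound $|\mathcal{V}_{\e_n}(G_n)\cap Q^{t_n}_{i,j}|\ge c\,t_n^2$ implied by \eqref{eqn:unifBnd}, and (iii) the Poincar\'e inequality for $u\in W^{1,2}(Q^{t_n}_{i,j})$. Since $\bar U^{t_n}\to u$ in $L^2(Q)$ as $t_n\to 0$, this delivers $T^{G_n}(u_n)\to u$ in $L^2(Q)$.

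An analogous computation yields $T^{G_n}(u_n)\to v$ in $L^2(Q)$: Proposition \ref{prop:L2ongrid} applied to $u_n\to v$ ensures $L^2$-convergence of $\hat u_n$ to $v$ on Voronoi cells of arbitrary $\Upsilon$-regular grids. Extending $\{G_n\}_{n\in\mathbb N}$ to a full $(\e,t)$-indexed family of $\Upsilon$-regular grids via Theorem \ref{thm:pathConn}, and performing a diagonal subsequence extraction in the double limit of Proposition \ref{prop:L2ongrid}, we obtain along a further subsequence that $\int_{\mathcal{V}_{\e_n}(G_n)}|\hat u_n - v|^2\d x\to 0$, and the same Poincar\'e-type upgrade then delivers $T^{G_n}(u_n)\to v$ in $L^2(Q)$. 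Uniqueness of the $L^2$-limit forces $u=v$ almost everywhere on $Q$, which completes the identification. The most delicate point is this diagonal step, which reconciles the pointwise-in-$(\e,t)$ freedom allowed by Proposition \ref{prop:L2ongrid} with the fixed sequence of scales $(\e_n,t_n)$ along which the convergence must be extracted.
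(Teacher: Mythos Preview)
Your argument is \textbf{circular}. You invoke Theorem~\ref{thm:Compactness} (Compactness) to obtain a subsequential limit $v$ in the sense of Definition~\ref{def:conv}, but in the paper the proof of Theorem~\ref{thm:Compactness} itself relies on Proposition~\ref{prop:equivalence} in its final step (to pass from $L^2$-convergence on a diagonal family of grids to convergence in the sense of Definition~\ref{def:conv}). So Proposition~\ref{prop:equivalence} must be established \emph{before} compactness is available, and you cannot appeal to compactness here.

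There is also a secondary gap in your Step~3. Proposition~\ref{prop:L2ongrid} gives the iterated limit $\lim_{t\to 0}\limsup_{\e\to 0}$ for an arbitrary $(\e,t)$-indexed family of grids, but this does not by itself control the diagonal along the \emph{prescribed} scales $(\e_n,t_n)$: a diagonal extraction from the double limit produces some sequence of scales, not necessarily the one carrying the given grids $G_n$. So even if compactness were legitimately available, the identification $T^{G_n}(u_n)\to v$ would need a further argument.

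The paper's proof avoids both issues by arguing directly. Your Step~2, showing $T^{G_n}(u_n)\to u$ in $L^2(Q)$, is exactly the paper's first step. From there, rather than invoking compactness, the paper \emph{constructs}, for each fixed $t>0$, a rearranged grid $\tilde G_{n,t}\in\mathcal{G}_t(2\Upsilon;\eta_n)$ out of $G_n$ (by selecting and relabeling the paths of $G_n$ that fall inside each coarse rectangle $R_i^h(t)$, $R_j^v(t)$; since $t_n\ll t$, each coarse rectangle contains $\sim t/t_n$ fine rectangles, hence the required number of paths). A direct computation comparing $(u_n)^{\tilde G_{n,t}}_J$ to the Lebesgue averages $(u)_{Q_t(J)}$, using $T^{G_n}(u_n)\to u$, shows $T^{\tilde G_{n,t}}(u_n)\to u^t:=\sum_J (u)_{Q_t(J)}\ca_{Q_t(J)}$, and of course $u^t\to u$ in $L^2(Q)$. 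This exhibits \emph{one} family of regular $t$-grids along which the convergence of Definition~\ref{def:conv} holds; Lemma~\ref{lem:comparisongrid} then upgrades this to all regular grids.
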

\begin{proof}
We first show that
	\begin{equation}\label{consequence}
	\lim_{n\rightarrow +\infty} \int_Q |T^{G_n}(u_n(x))-u(x)|^2\d x=0. 
	\end{equation}
Indeed, by means of the same computation as in the proof of Proposition \ref{prop:L2ongrid} we can achieve \eqref{eqn:avAndPw}; that is,
	\[
	\e_n^2 \sum_{x\in G_n\cap Q} |u_n(x)-T^{G_n}(u_n)(x)|^2\leq C t_n^2 \F_{\e_n}(u_n;Q).
	\]
This implies that
	\[
	\lim_{n\rightarrow +\infty} \int_{\mathcal{V}_{\e_n} (G_n)\cap Q}  |\hat{u}_n(x)-T^{G_n}(u_n)(x)|^2\d x=0,
	\]
which, together with hypothesis \eqref{eqn:convOnL2one} implies \eqref{consequence}. 
\smallskip

We now fix $t\in \R$ and we provide a suitable rearrangement of $G_n$ in order to obtain a grid $\tilde{G}_{\e_n,t}\in  \mathcal{G}_{t}(\Upsilon;\eta_n)$, provided $n$ is large enough.  
We describe the construction.
Inside each $R_i^h(t)$ we can find at least $c_n=\lfloor\sfrac{(t-2t_n)}{t_n}\rfloor$ horizontal rectangles $R_{i'}^{h}(t_n),\ldots R_{i'+c_n} ^{h}(t_n)$ strictly contained in $R_i^h(t)$ and each containing $M_n$ disjoint horizontal paths $\mathbf{h}_1^i,\ldots , \mathbf{h}_{M_n}^i$.  Note that 
\begin{equation}\label{jojo}
 \frac{t}{2}  \leq   t_n c_n   \leq 2 t
\end{equation}
In particular, by relabeling the paths of $G_{n}$ as $\{ \mathbf{h}_m^i, \ m=1,\ldots, M_n c_n \} \subset R_i^h(t)$ for each horizontal rectangle accordingly. We repeat the same argument for vertical paths and we obtain a grid $\tilde{G}_{n,t}$. Thanks to \eqref{jojo}  we have that $\tilde{G}_{n,t}\in \mathcal{G}_{t}(2\Upsilon, \eta_n)$.  

\smallskip
We now show that $T^{\tilde{G}_{n,t}}(u_n)\rightarrow u^t$ and $u^t\rightarrow u$ in $L^2(Q)$.  Let now $J\in \mathcal{I}_t(Q)$ and consider
	\[
	\mathcal{I}_n(Q_t(J)):=\{ J'\in \mathcal{I}_{t_n}(Q_t(J)) : \ Q_{t_n}(J')\subset Q_t(J)\}, \ \qquad r_n(J):=\#(\mathcal{I}_n(Q_t(J))).
	\]
Recalling the notation
	\begin{align*}
	(u_n)_{J'}^{G_n} = \frac{1}{\eta_n(G_n\cap Q_t(J') )  } \sum_{x\in G_n\cap Q_t(J') } u_n(x), \quad
		(u_n)_{J}^{\tilde{G}_{n,t} } = \frac{1}{\eta_n(\tilde{G}_{n,t} \cap Q_t(J) ) } \sum_{x\in \tilde{G}_{n,t} \cap Q_t(J) } u_n(x),
	\end{align*}
we have
		\begin{align*}
		(u_n)_{J}^{\tilde{G}_{n,t} } &= \frac{1}{\eta_n(\tilde{G}_{n,t} \cap Q_{t_n}(J) ) } \sum_{x\in \tilde{G}_{n,t} \cap Q_{t_n}(J) } u_n(x) \\
		&= \frac{1}{\eta_n(\tilde{G}_{n,t} \cap Q_t(J) ) } \sum_{J'\in 	\mathcal{I}_n(Q_t(J))} \sum_{ x\in \tilde{G}_{n,t} \cap Q_{t_n}(J') } u_n(x) + R_n(J)\\
		&= \sum_{J'\in 	\mathcal{I}_n(Q_t(J))} \frac{\eta_n(\tilde{G}_{n,t} \cap Q_{t_n}(J') ) }{\eta_n(\tilde{G}_{n,t} \cap Q_t(J) ) } (u_n)_{J'}^{G_n} + R_n(J)\\
		&= \sum_{J'\in 	\mathcal{I}_n(Q_t(J))} \kappa_n(J') (u_n)_{J'}^{G_n} + R_n(J),
	\end{align*}
	where
	\begin{align*}
	\kappa_n(J') &= \frac{\eta_n(\tilde{G}_{n,t} \cap Q_{t_n}(J') ) }{\eta_n(\tilde{G}_{n,t} \cap Q_t(J) ) }\\
	R_n(J)&= \frac{1}{\eta_n(\tilde{G}_{n,t} \cap Q_t(J) ) } \sum_{J'\notin \mathcal{I}_n(Q_t(J))} \sum_{x\in G_n\cap Q_{t_n}(J')\cap Q_t(J)} u_n(x).
	\end{align*}
Observing that
	\begin{align*}
	(u)_{Q_t(J)}=\frac{t_n^2}{t^2}\sum_{J'\in \mathcal{I}_n(Q_t(J))} (u)_{Q_{t_n}(J')}+\frac{1}{t^2}\sum_{J' \in \mathcal{I}_{t_n}(\partial Q_t(J)))}\int_{Q_{t_n}(J')\cap Q_t(J)} u(y) \d y
	\end{align*}
we have
\begin{align}
\int_{Q}& \Bigl|T^{\tilde{G}_{n,t}}(u_n)(x) - \sum_{ J\in \mathcal{I}_t(Q)} (u)_{Q_t(J)} \ca_{Q_t(J)}(x) \Bigr|^2\d x\nonumber\\
=& \sum_{ J\in \mathcal{I}_t(Q)} t^2 |(u_n)_{J}^{\tilde{G}_{n,t} }  - (u)_{Q_t(J)}|^2 \nonumber\\
=&\ C\sum_{ J\in \mathcal{I}_t(Q)} t^2 \Bigl| \sum_{J'\in 	\mathcal{I}_n(Q_t(J))} \kappa_n(J') (u_n)_{J'}^{G_n} - (u)_{Q_t(J)}\Bigr|^2+\sum_{ J\in \mathcal{I}_t(Q)} t^2 |R_n(J)|^2\nonumber\\
\leq &\ C\sum_{ J\in \mathcal{I}_t(Q)} t^2 \Bigl| \sum_{J'\in 	\mathcal{I}_n(Q_t(J))} \kappa_n(J') (u_n)_{J'}^{G_n} - \frac{t_n^2}{t^2} (u)_{Q_{t_n}(J')}\Bigr|^2 +C\sum_{ J\in \mathcal{I}_t(Q)} t^2 |R_n'(J)|^2,
\label{w1}
\end{align}
where we have set
	\[
	R_n'(J):=R_n(J)+\frac{1}{t^2}\sum_{J' \in \mathcal{I}_{t_n}(\partial Q_t(J)))}\int_{Q_{t_n}(J')\cap Q_t(J)} u(y) \d y.
	\]
We now concentrate on proving separately the estimates required.
\begin{align}
& \hskip-3cm\Bigl| \sum_{J'\in 	\mathcal{I}_n(Q_t(J))} \kappa_n(J') (u_n)_{J'}^{G_n} - \frac{t_n^2}{t^2} (u)_{Q_{t_n}(J')}\Bigr|^2 \nonumber  \\
  \leq  &  \Bigl| \sum_{J'\in 	\mathcal{I}_n(Q_t(J))} \kappa_n(J') \Bigl[(u_n)_{J'}^{G_n} -  (u)_{Q_{t_n}(J')}\Bigr]\Bigr|^2\nonumber \\
& + \sum_{J'\in 	\mathcal{I}_n(Q_t(J))}  \Bigl|  \kappa_n(J')  - \frac{t_n^2}{t^2} \Bigr|^2  |(u)_{Q_{t_n}(J')}|^2\nonumber\\
  \leq  & \sum_{J'\in 	\mathcal{I}_n(Q_t(J))} \kappa_n(J')^2 \frac{r_n(J)}{t_n^2}   \int_{Q_{t_n}(J')} |(u_n)_{J'}^{G_n} -  u(x)|^2 \d x  \nonumber \\
& + \sum_{J'\in 	\mathcal{I}_n(Q_t(J))}  \Bigl|  \kappa_n(J')  - \frac{t_n^2}{t^2} \Bigr|^2  |(u)_{Q_{t_n}(J')}|^2\label{w11}
\end{align}
Observe that
$
	r_n(J)\leq \frac{t^2}{t_n^2},  \ \ \kappa_n(J')\leq C \frac{t_n^2}{t^2}
$,
and then
\begin{align}
 &\hskip-1cm\sum_{J'\in 	\mathcal{I}_n(Q_t(J))}  \kappa_n(J')^2 \frac{r_n(J)}{t_n^2}   \int_{Q_{t_n}(J')} |(u_n)_{J'}^{G_n} -  u(x)|^2 \d x \nonumber\\
 \leq& \frac{C}{t^2}  \sum_{J'\in 	\mathcal{I}_n(Q_t(J))}     \int_{Q_{t_n}(J')} |(u_n)_{J'}^{G_n} -  u(x)|^2 \d x 
 \leq \frac{C}{t^2} \int_{Q_t(J)} |T^{G_n}(u_n)(x)-u(x)|^2\d x. \label{w111}
\end{align}
We also have
\begin{align}
\sum_{J'\in 	\mathcal{I}_n(Q_t(J))} & \Bigl|  \kappa_n(J')  - \frac{t_n^2}{t^2} \Bigr|^2  |(u)_{Q_{t_n}(J')}|^2\leq C \frac{t_n^2}{t^4} \sum_{J'\in 	\mathcal{I}_n(Q_t(J))} \int_{Q_{t_n}(J') } |u|^2\d x\nonumber\\
&\leq C \frac{t_n^2}{t^4} \int_{Q_{t}(J) } |u|^2\d x\label{w112}.
\end{align}
In particular, \eqref{w111} and \eqref{w112} yield
 \begin{equation}\label{w1f}
 \begin{split}
 \sum_{ J\in \mathcal{I}_t(Q)} t^2 \Bigl| \sum_{J'\in 	\mathcal{I}_n(Q_t(J))} \kappa_n(J') (u_n)_{J'}^{G_n} - \frac{t_n^2}{t^2} (u)_{Q_{t_n}(J')}\Bigr|^2\leq & C \int_{Q} |T^{G_n}(u_n)(x) - u(x)|^2\d x\\
 &+ \frac{Ct_n^2 }{t^2} \int_Q |u|^2\d x.
 \end{split}
 \end{equation}
Finally, we estimate  
	\begin{align}
 t^2|R_n(J)|^2&\leq t^2\Bigl(\sum_{J'\in \mathcal{I}_t(\partial Q_t(J) )}  \kappa_n(J') (u_n)^{G_n}_{J'}\Bigr)^2\nonumber\\
	&\leq t^2 \#( \mathcal{I}_{t_n}(\partial Q_t(J) )) \sum_{J'\in \mathcal{I}_{t_n}(\partial Q_t(J) )}  \kappa_n(J')^2 | (u_n)^{G_n}_{J'} |^2 \nonumber\\
	&\leq \frac{Ct_n}{t} \sum_{J'\in \mathcal{I}_{t_n}(\partial Q_t(J) )} t_n^2 | (u_n)^{G_n}_{J'} |^2\nonumber\\
	&\leq \frac{Ct_n}{t} \sum_{J'\in \mathcal{I}_{t_n}(\partial Q_t(J) )} \int_{Q_{t_n}(J')} | T^{G_n}(u_n)(x) |^2\d x.\label{w2}
	\end{align}
	Also, we have
		\begin{align}
\frac{1}{t^2}&\Bigl(\sum_{J' \in \mathcal{I}_{t_n}(\partial Q_t(J)))}\int_{Q_{t_n}(J')\cap Q_t(J)} u(y) \d y\Bigr)^2\nonumber\\
&\leq C\frac{\#( \mathcal{I}_{t_n}(\partial Q_t(J) ))}{t^2}\sum_{J' \in \mathcal{I}_{t_n}(\partial Q_t(J)))}\Bigl(\int_{Q_{t_n}(J')\cap Q_t(J)} u(y) \d y\Bigr)^2\nonumber\\
&\leq C\frac{t_n}{t}\sum_{J' \in \mathcal{I}_{t_n}(\partial Q_t(J)))}\int_{Q_{t_n}(J')\cap Q_t(J)} u(y)^2 \d y  \label{w3}
		\end{align}
By collecting \eqref{w1f},  \eqref{w2} and \eqref{w3} we then have 
	\begin{align*}
	\int_{Q}& \Bigl|T^{\tilde{G}_{n,t}}(u_n)(x) - \sum_{ J\in \mathcal{I}_t(Q)} (u)_{Q_t(J)} \ca_{Q_t(J)}(x) \Bigr|^2\d x\\
	\leq &\ C \int_{Q} |T^{G_n}(u_n)(x) - u(x)|^2\d x + \frac{Ct_n^2 }{t^2} \int_Q |u|^2\d x\\
&+\frac{C t_n}{t} \sum_{ J\in \mathcal{I}_t(Q)}  \sum_{J'\in \mathcal{I}_{t_n}(\partial Q_t(J) )} \int_{Q_{t_n}(J')} ( | T^{G_n}(u_n)(x) |^2+|u(x)|^2)\d x\\
\leq &\ C \int_{Q} |T^{G_n}(u_n)(x) - u(x)|^2\d x + \frac{Ct_n^2 }{t^2} \int_Q |u|^2\d x\\
& +\frac{C t_n}{t}  \int_{Q} (| T^{G_n}(u_n)(x) |^2+|u(x)|^2)\d x.
	\end{align*}
Hence,  by taking the limit as $n\rightarrow +\infty$ we obtain $T^{\tilde{G}_{n,t}}(u_n) \rightarrow u^t$ in $L^2(Q)$ where
	\[
	u^t:=\sum_{ J\in \mathcal{I}_t(Q)} (u)_{Q_t(J)} \ca_{Q_t(J)}(x).
	\]
Note that $u^t\rightarrow u$ in $L^2(Q)$.  In particular we exhibit a sequence of regular grids along which convergence in the sense of Definition \ref{def:conv} holds.  Lemma \ref{lem:comparisongrid} ensures then that it occurs on any sequence of regular grids, achieving thus the proof. 
\end{proof} 

\begin{remark}\rm
Note that Propositions \ref{prop:L2ongrid} and \ref{prop:equivalence} imply also that the stronger convergence on the piecewise-constant functions implies the convergence in the sense of Definition \ref{def:conv},  provided the sequence has equibounded energy.  Indeed if $u_\e$ is a sequence satisfying \eqref{eqn:unifBnd} and
	\[
	\lim_{\e\rightarrow 0} \int_{\mathcal{V}_{\e}(\eta_{\e})\cap Q}  |\hat{u}_\e(x)-u(x)|^2\d x=0
	\]
then, in particular it converges when restricted to any sequence of regular grids, allowing us to invoke Proposition \ref{prop:equivalence} and thus to conclude that $u_{\e}\rightarrow u$ in the sense of Definition \ref{def:conv}.
 This has some useful consequence, as in the proof of the locality of the convergence (Lemma \ref{lem:tecConv}) and the fact that we can diagonalize in the $L^2$ convergence (Lemma \ref{lem:diagonal}). \end{remark}

\begin{lemma}\label{lem:tecConv}
Let $u_{\e}\rightarrow u$ in the sense of Definition \ref{def:conv} and satisfying \eqref{eqn:CMPbnds}.  Let $A\subseteq Q$ and suppose that $u_{\e}=w$  on $A$ for some $w\in C^1(A)$ and for all $\e>0$. Then $u=w$ on $A$. 
\end{lemma}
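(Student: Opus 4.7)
The plan is to use Definition \ref{def:conv} directly through the intermediate piecewise-constant functions $u^t$. The key observation is that, since by hypothesis $u_\e(y)=w(y)$ for every $y\in A\cap \e\eta$, whenever a mesoscopic square $Q^t_{i,j}$ sits inside $A$, the grid average $(u_\e)^{G_{\e,t}}_{i,j}$ is nothing but an average of values of $w$ at points of $Q^t_{i,j}$, and by continuity of $w$ this is close to $w$ itself with an error controlled uniformly in $\e$. Pushing this information through the $L^2$-convergence furnished by Definition \ref{def:conv} (first $\e\to 0$, then $t\to 0$) gives $u=w$ on $A$.

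First, fix any sequence of regular grids $\{G_{\e,t}\in \mathcal{G}_t(\Upsilon;\eta_\e)\}_{\e,t>0}$ (which exists for $\e$ small by Theorem \ref{thm:pathConn}) and let $K\subset\subset A$ be compact. For $t$ small enough, every $Q^t_{i,j}$ meeting $K$ is contained in $A$, so all points of $G_{\e,t}\cap Q^t_{i,j}$ lie in $A$ and carry the value $u_\e=w$. Denoting by $c_{i,j}$ the center of $Q^t_{i,j}$ and by $\omega_w$ a modulus of continuity of $w$ on a compact neighborhood of $K$ contained in $A$,
\[
\bigl|(u_\e)^{G_{\e,t}}_{i,j}-w(c_{i,j})\bigr|\;=\;\Bigl|\,\tfrac{1}{\eta_\e(G_{\e,t}\cap Q^t_{i,j})}\sum_{x\in G_{\e,t}\cap Q^t_{i,j}}\!(w(x)-w(c_{i,j}))\,\Bigr|\;\leq\;\omega_w(t)
\]
for every such $(i,j)$, independently of $\e$.

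Next, by Definition \ref{def:conv}, $T^{G_{\e,t}}(u_\e)\to u^t$ in $L^2(Q_{tk_t})$; since both sides belong to the finite-dimensional space $X_t$, where the $L^2$-norm reads $t^2\sum_{i,j}|c_{i,j}|^2$, this convergence is equivalent to the coefficientwise convergence $(u_\e)^{G_{\e,t}}_{i,j}\to u^t_{i,j}$ as $\e\to 0$ for each fixed $(i,j)$. Letting $\e\to 0$ in the bound above therefore gives $|u^t_{i,j}-w(c_{i,j})|\leq \omega_w(t)$ for every $Q^t_{i,j}\subset A$, and a triangle inequality with $|x-c_{i,j}|\leq t$ yields
\[
\bigl|u^t(x)-w(x)\bigr|\;\leq\;2\,\omega_w(t)\qquad \text{for all $x\in K$.}
\]
Hence $u^t\to w$ uniformly on $K$ as $t\to 0$. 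Since also $u^t\to u$ in $L^2(Q)$ by Definition \ref{def:conv}, the two limits must agree, so $u=w$ a.e.\ on $K$, and by arbitrariness of $K\subset\subset A$ on all of $A$.

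The argument is essentially mechanical once the continuity of $w$ is exploited; the only tiny subtlety is the finite-dimensional passage from $L^2$-convergence on $X_t$ to convergence of the coefficients. In particular the energy bound \eqref{eqn:CMPbnds} and the full machinery of Proposition \ref{prop:L2ongrid} are not needed here.
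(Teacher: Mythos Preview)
Your argument is correct and takes a genuinely different route from the paper's. The paper invokes Proposition~\ref{prop:L2ongrid} (which needs the energy bound \eqref{eqn:CMPbnds}) to obtain
\[
\lim_{t\to 0}\limsup_{\e\to 0}\int_{\mathcal{V}_\e(G_{\e,t})\cap A}|\hat u_\e-u|^2\,\d x=0,
\]
then shows directly that $\int_{\mathcal{V}_\e(G_{\e,t})\cap A}|\hat u_\e-w|^2\,\d x\to 0$ using the controlled size of regular Voronoi cells, and finally concludes via a weak-$L^2$ limit of the indicator functions $\ca_{\mathcal{V}_\e(G_{\e,t})}$, which have a weak limit bounded below by a positive constant.

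You instead bypass both Proposition~\ref{prop:L2ongrid} and the weak-limit trick by working at the level of the mesoscopic intermediates $u^t$: since on squares $Q^t_{i,j}\subset A$ the grid average is just a sample average of $w$, continuity of $w$ pins $(u_\e)^{G_{\e,t}}_{i,j}$ near $w(c_{i,j})$ uniformly in $\e$, and the finite-dimensionality of $X_t$ lets you pass this through the $L^2$-limit in $\e$. This is more elementary and, as you note, does not actually use \eqref{eqn:CMPbnds}. The paper's approach, on the other hand, fits naturally into the broader equivalence between Definition~\ref{def:conv} and $L^2$-convergence on grids that is developed in that section, and illustrates how the grid geometry (positive density of regular cells) is used elsewhere. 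One small notational slip: you use $c_{i,j}$ both for the center of $Q^t_{i,j}$ and, implicitly, for the coefficients in the $X_t$-norm formula; the meaning is clear but worth disambiguating.
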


\begin{proof}
Let $\{ G_{\e,t }\in \mathcal{G}_{t}(\Upsilon,\eta_{\e})\}_{\e,t>0}$ be a sequence of regular grids.  Then, Proposition \ref{prop:L2ongrid} gives
	\begin{align*}
	\lim_{t \rightarrow 0} \limsup_{\e \rightarrow 0 }\int_{\mathcal{V}_{\e} (G_{\e ,t })\cap A } |\hat{u}_{\e}(x)-u(x)|^2\d x=0.
\end{align*}	  
Moreover, since $u_{\e}(x)=w(x)$ on $\eta_{\e}\cap A$ and since $|x-y|\leq \lambda\e$ for $x\in G_{\e,t}$, $y\in C(x;\eta_\e)$ (property (g)  of Definition \ref{def:pathConn}) and $|C(x;\eta_\e)|\leq C\e^2$, we have 
\begin{align*}
\int_{\mathcal{V}_{\e} (G_{\e,t})\cap A} |\hat{u}_{\e}(y)-w(y)|^2\d y=&\sum_{x\in G_{\e,t} \cap A} \int_{C(x;\eta_{\e})\cap A} |w(x)-w(y)|^2\d y\\
\leq &\ C \|\nabla w\|_{\infty}^2 \sum_{x\in \eta_{\e} \cap A}  \e^4\leq C\e^2,
\end{align*}
where $C$ depends on $\alpha,\lambda$ and $|A|$.  Here we have used Proposition \ref{propo:count}.  Then,  for some subsequence $\{\e_n,t_n\}_{n\in \N}$ and by means of a triangular inequality we have
\[
\lim_{n\rightarrow +\infty} \int_{\mathcal{V}_{\e_n} (G_{\e_n,t_n})\cap A} |u(x)-w(x)|^2\d x=0.
\]
Observe now that $\ca_{\mathcal{V}_{\e_n} (G_{\e_n,t_n})}\rightharpoonup f$ weakly $L^2(A)$ and $f(x)\geq s_0>0$  due to the good properties of the grids.  In particular 
	\[
	s_0\int_{A}|u(x)-w(x)|\d x=\lim_{n\rightarrow +\infty} \int_{\mathcal{V}_{\e_n} (G_{\e_n,t_n})\cap A} |u(x)-w(x)|\d x=0.
	\]
Being $s_0>0$ we conclude $u=w$ on $A$.
\end{proof}
\begin{lemma}\label{lem:diagonal}
Let $\{u_{n,r}\in L^2(Q;\eta_{\e_n}), \ r>\e_n>0\}$ be a sequence such that
\begin{itemize}
\item[a)] For any $r>0$,  $u_{n,r}\rightarrow u_r$ as $n$ goes to $+\infty$ in the sense of Definition \ref{def:conv};
\item[b)] $\{u_r\}_{r>0} \subset  W^{1,2}(Q)$,  $u_r\rightarrow u$ as $r$ goes to $0$ in $L^2(Q)$ for some $u\in W^{1,2}(Q)$;
\item[c)] $\sup_{r>\e_n>0}\{\F_{\e_n}(u_{n,r};A)\}<+\infty$ on some open set $A\supset Q$.
\end{itemize}
Then there is a sequence $r_{n}>\!>\e_n$ such that $r_{n} \rightarrow 0$ and for which $u_{n,r_{n}}\rightarrow u$ in the sense of Definition \ref{def:conv}.
\end{lemma}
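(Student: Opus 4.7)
The plan is to apply a standard Attouch-style diagonal argument, using Propositions \ref{prop:L2ongrid} and \ref{prop:equivalence} to reduce the convergence in the sense of Definition \ref{def:conv} to $L^2$-convergence of piecewise-constant extensions on a single sequence of regular grids. First fix any auxiliary family $\{G_{n,t}\}_{n\in\N,\,t>0}$ with $G_{n,t}\in \mathcal{G}_{t}(\Upsilon;\eta_{\e_n})$, which by Theorem \ref{thm:pathConn} exist once $\e_n$ is small enough relative to $t$. For each fixed $r>0$, assumption (a) combined with the equibounded energies in (c) allows Proposition \ref{prop:L2ongrid} to be applied to the sequence $\{u_{n,r}\}_n$, yielding
\begin{equation*}
\lim_{t\to 0}\limsup_{n\to\infty}\int_{\mathcal{V}_{\e_n}(G_{n,t})\cap Q}|\hat{u}_{n,r}(x)-u_{r}(x)|^2\d x=0.
\end{equation*}

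Now I would perform the diagonalization. For each $k\in\N$, pick $t_k\le 1/k$ such that $\limsup_{n\to\infty}\int_{\mathcal{V}_{\e_n}(G_{n,t_k})\cap Q}|\hat{u}_{n,1/k}(x)-u_{1/k}(x)|^2\d x\le 1/k$, and then pick an increasing sequence $N_k\to\infty$ such that for all $n\ge N_k$ the following hold simultaneously: the grid $G_{n,t_k}$ exists (an $\e_n\le \e_0(\omega,t_k)$ condition from Theorem \ref{thm:pathConn}), $\e_n<1/k$ (so that $r_n>\!>\e_n$), and $\int_{\mathcal{V}_{\e_n}(G_{n,t_k})\cap Q}|\hat{u}_{n,1/k}(x)-u_{1/k}(x)|^2\d x\le 2/k$. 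Define $r_n:=1/k$ and $\tilde{G}_n:=G_{n,t_k}$ for $N_k\le n<N_{k+1}$, so that $r_n\to 0$ and $r_n>\!>\e_n$ as required.

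The conclusion follows from the triangle inequality. For $n\in[N_k,N_{k+1})$,
\begin{align*}
\int_{\mathcal{V}_{\e_n}(\tilde{G}_n)\cap Q}|\hat{u}_{n,r_n}(x)-u(x)|^2\d x &\le 2\int_{\mathcal{V}_{\e_n}(\tilde{G}_n)\cap Q}|\hat{u}_{n,r_n}(x)-u_{r_n}(x)|^2\d x \\ &\quad +2\|u_{r_n}-u\|_{L^2(Q)}^2 \le \frac{4}{k}+2\|u_{1/k}-u\|_{L^2(Q)}^2,
\end{align*}
both terms vanishing as $n\to\infty$, the first by construction and the second by assumption (b). Combined with the uniform energy bound from (c), Proposition \ref{prop:equivalence} applied along the grids $\tilde{G}_n\in\mathcal{G}_{t_k}(\Upsilon;\eta_{\e_n})$ then yields $u_{n,r_n}\to u$ in the sense of Definition \ref{def:conv}. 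The only delicate point is the simultaneous calibration of the threshold $N_k$ so that grid existence (Theorem \ref{thm:pathConn}), the separation $r_n>\!>\e_n$, and the $L^2$-smallness on the chosen grid all occur past a common index; all of the real analytic content is already packaged in Propositions \ref{prop:L2ongrid} and \ref{prop:equivalence}.
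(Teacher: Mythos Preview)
Your proof is correct and follows the same approach as the paper: invoke Proposition~\ref{prop:L2ongrid} to reduce to $L^2$-convergence on grids, diagonalize, then apply Proposition~\ref{prop:equivalence} to return to convergence in the sense of Definition~\ref{def:conv}. Your treatment is in fact more explicit than the paper's, which compresses the simultaneous diagonalization over $t$ and $r$ into a single sentence and a somewhat informal ``up to a subsequence''; your careful calibration of $t_k$, $N_k$ and the grid existence threshold makes the argument cleaner.
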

\begin{proof}
By invoking Proposition \ref{prop:L2ongrid} we have (up to a subsequence)
	\[
	\limsup_{n \rightarrow +\infty} \int_{\mathcal{V}_{\e_n}(G_n)\cap Q} |\hat{u}_{n,r}(x)-u_r(x)|^2\d x=0.
	\]
Since also $u_r\rightarrow u$ we have
	\[
	\lim_{r\rightarrow 0} \limsup_{n \rightarrow +\infty} \int_{\mathcal{V}_{\e_n}(G_n)\cap Q} |\hat{u}_{n,r}(x)-u(x)|^2\d x=0.
	\]
Hence,  we can select the sought sequence $r_n$ to satisfy
	\[
	  \limsup_{n \rightarrow +\infty} \int_{\mathcal{V}_{\e_n}(G_n)\cap Q} |\hat{u}_{n,r_n}(x)-u(x)|^2\d x=0.
	\]
Property (c) and Proposition \ref{prop:equivalence} imply now that $u_{n,r_n}\rightarrow u$ in the sense of Definition \ref{def:conv}.
\end{proof}
\subsection{Proof of the compactness theorem}
We are now ready to prove Theorem \ref{thm:Compactness}.  We rely on the following Lemma \ref{lem:alici}, which comes as a consequence of \cite[Theorem 3.1]{alicandro2004general}. Recall that
	\[
	\mathcal{I}_t(Q):=\{J=(i,j)\in t \Z^2\cap Q\}.
	\]
If $|J-J'|=t$ we write $\langle J,J'\rangle$,  meaning that the square $Q_J^t=Q_{i,j}^t$ and $Q_{J'}^t=Q_{i',j'}^t$ are neighboring squares.
\begin{lemma}\label{lem:alici}
Let $\{u^t \in X_t\}_{t\in \R_+}$ be a sequence of function such that
	\[
	\sup_{t\in \R_+}\Bigl\{\sum_{J\in \mathcal{I}_t(Q)}  \sum_{\substack{J'\in  \mathcal{I}_t(Q):\\ \langle J,J' \rangle}} |u_J^t-u^t_{J'}|^2+t^2\sum_{J\in \mathcal{I}_t(Q)} |u^t_J|^2\Bigr\} <+\infty.
	\]
Then there exists a function $u\in W^{1,2}(Q)$ and a subsequence $\{t_l\}_{l\in \N}$ such that the piecewise-constant interpolation of $u_{t_l}$ converge to $u$ in $L^2(Q)$.
\end{lemma}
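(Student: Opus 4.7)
The two hypotheses on $u^t$ read as a discrete Dirichlet-type bound on nearest-neighbour differences on the lattice $t\Z^2$ together with a discrete $L^2$ bound; this is exactly the scaling under which \cite[Theorem 3.1]{alicandro2004general} produces $W^{1,2}$ limits. My plan is to pass from $u^t$ to a piecewise-affine interpolant, apply Rellich--Kondrachov, and then verify that the piecewise-constant $u^t$ shares the same $L^2$-limit.

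The first step is to associate to each $u^t$ a piecewise-affine interpolant $\hat u^t$ on a fixed triangulation of $Q_{t k_t}$ obtained by halving each square $Q^t_J$ along a diagonal and linearly interpolating the nodal values $u^t_J$ (extending $u^t$ by $0$ at those nodes of $t\Z^2$ that lie outside $Q_{t k_t}$, if needed). On each triangle the gradient is a universal multiple of $(u^t_J-u^t_{J'})/t$ for a pair of neighbouring nodes, so
\[
\int_{Q_{t k_t}}|\nabla \hat u^t|^2\d x\le C\sum_{J\in \mathcal{I}_t(Q)}\sum_{\substack{J'\in \mathcal{I}_t(Q):\\ \langle J,J'\rangle}} |u^t_J-u^t_{J'}|^2,\qquad \int_{Q_{t k_t}}|\hat u^t|^2\d x\le C\, t^2\sum_{J\in \mathcal{I}_t(Q)}|u^t_J|^2.
\]
Both right-hand sides are uniformly bounded by assumption, so $\{\hat u^t\}_{t>0}$ is bounded in $W^{1,2}(Q)$. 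Rellich--Kondrachov then yields a subsequence $t_l\to 0$ and $u\in W^{1,2}(Q)$ with $\hat u^{t_l}\to u$ in $L^2(Q)$.

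The second step is to compare $\hat u^t$ with the piecewise-constant extension $u^t$. On each square $Q^t_J$ they coincide at the vertex $J$ and differ elsewhere by at most the oscillation with respect to adjacent nodes, and each square has area $t^2$; hence
\[
\int_Q|\hat u^t - u^t|^2\d x\le C\, t^2\sum_{J\in \mathcal{I}_t(Q)}\sum_{\substack{J'\in \mathcal{I}_t(Q):\\ \langle J,J'\rangle}} |u^t_J-u^t_{J'}|^2\le C\, t^2 \longrightarrow 0.
\]
Combined with the previous step this gives $u^{t_l}\to u$ in $L^2(Q)$, as claimed. The only point requiring care, and the expected obstacle, is the handling of the boundary strip $Q_{t k_t}\setminus Q$ when defining $\hat u^t$: one needs an extension of $u^t$ to the missing nodes of $t\Z^2$ (for instance by zero on a slightly enlarged window, or by reflection across $\partial Q_{tk_t}$) that does not deteriorate either of the two bounds above. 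Once this is in place, the argument is the standard quadratic nearest-neighbour case treated in \cite[Theorem 3.1]{alicandro2004general}.
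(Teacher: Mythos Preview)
Your proposal is correct and matches the paper's approach: the paper does not give an independent proof of this lemma but simply records that it ``comes as a consequence of \cite[Theorem 3.1]{alicandro2004general}'', and your argument (piecewise-affine interpolation, uniform $W^{1,2}$ bound, Rellich--Kondrachov, comparison with the piecewise-constant interpolant) is precisely the standard proof of that cited result in the quadratic nearest-neighbour case. Your remark about the boundary strip is a valid caveat but is handled routinely, as you indicate.
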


\begin{proof}[Proof of Theorem \ref{thm:Compactness}]
Let $\alpha,  \lambda$ be fixed and ensuring the validity of Theorem \ref{thm:pathConn} and choose $\{G_{\e,t}\in\mathcal{G}_{t}(\Upsilon;\eta_{\e})\}_{\e,t}$ to be a sequence of regular grids. Then, by invoking Lemma \ref{lem:localEstimate} and by summing up on $i,j\in \{1,\ldots,k_t\}$, we infer that
	\begin{equation}\label{in:enbnd}
	\sum_{J\in \mathcal{I}_t(Q)} \sum_{\substack{J'\in  \mathcal{I}_t(Q):\\ \langle J,J' \rangle}} \left|(u_\e)_J^{G_{\e,t}}-(u_\e)_{J'}^{G_{n,t}}\right|^2\leq C\F_\e(u_\e;A)<+\infty,
	\end{equation}
where we have adopted the shorthand $(u_\e)_J^{G_{\e,t}}=(u_\e)_{i,j}^{G_{\e,t}}$ for $J=(i,j)$. Moreover, by Jensen's inequality and the properties of the grid we have
	\begin{align}
	t^2\sum_{J\in \mathcal{I}_t(Q)}   \left|(u_\e)_J^{G_{\e,t}}\right|^2&\leq  C \e^2\sum_{J\in \mathcal{I}_t(Q)} \sum_{y\in G_{\e,t}\cap Q_J^t} |u_\e(y)|^2\nonumber\\
		&\leq C\e^2 \sum_{J\in \mathcal{I}_t(Q)} \sum_{y\in \cap Q_J^t} |u_\e(y)|^2\leq C \e^2\sum_{y\in \cap Q} |u_\e(y)|^2 <+\infty \label{in:normbnd}.
	\end{align}
Fix now a sequence $\{t_l\}_{\in \N} \in \R_+$ going to zero and observe that, for any $J\in \mathcal{I}_{t_l}(Q)$ we have
	\begin{align*}
	\left|(u_\e)_J^{G_{\e,t_l}} \right|^2 \leq \frac{1}{\eta_\e(G_{\e,t_l}\cap Q_J^{t_l})}\sum_{x\in G_{\e,t_l}\cap Q_J^{t_l} }  |u_\e(x)|^2 \leq \frac{\e^2}{t_l^2} \sum_{x\in G_{\e,t_l}\cap Q_J^{t_l}}  |u_\e(x)|^2<+\infty. 
	\end{align*}
Therefore, for such a fixed $t_l\in \R_+$, we can find  $\{\e_m\}_{m\in \N}$ such that
	$	(u_{\e_m})_J^{G_{\e_m,t_l}}\rightarrow u^{t_l}_J
	$.
Up to a diagonal extraction argument we may find $\{\e_m\}_{m\in \N}$ such that
\[	(u_{\e_m})_J^{G_{\e_m,t_l}}\rightarrow u^{t_l}_J \ \ \text{for any $J\in \{1,\ldots, k_{t_l} \}^2$, $l \in  \N$}.
	\]
This means that we can find a sequence of functions $\{u^{t_l}\in X_{t_l}\}_{l\in \N}$ and a subsequence $\{\e_m\}_{m\in \N}$ such that
	\[
	T^{G_{\e_m,t_l}}(u_{\e_m})\stackrel{ _m} {\longrightarrow} u^{t_l} \ \ \ \text{in $L^2(Q)$}.
	\] 
If we now invoke Lemma \ref{lem:alici}, combined with estimates \eqref{in:enbnd} and \eqref{in:normbnd}, we can find a subsequence of $\{t_l\}_{l\in \N}$ and a function $u\in W^{1,2}(Q)$ such that, with a slight abuse of notation, $u^{t_{l}}\rightarrow u$ in $L^2(Q)$ along the subsequence. But then, by Proposition \ref{prop:L2ongrid} we have
	\[
	\lim_{l\rightarrow +\infty} \limsup_{m\rightarrow +\infty} \int_{\mathcal{V}_{\e_m}(G_{\e_m,t_l})\cap Q}|\hat{u}_{\e_m}(x)-u(x)|^2\d x=0.
	\]
and by invoking Proposition \ref{prop:equivalence} this means that there is a subsequence of $\{ u_{\e_m}\}_{m\in \N}$ converging to $u$ in the sense of Definition \ref{def:conv}.
\end{proof}
%

\section{Proof of the $\Gamma$-convergence Theorem \ref{thm:Gamma}}\label{sct:gamma}
In this section we prove Theorem \ref{thm:Gamma}. We state and prove some preliminary results, subordinated to the identification of the constant $\Xi$ and to the development of the technical machinery required to present the proof.

\subsection{The cell problem}\label{sbsct:cellP}
We recall the notation for the boundary-value problem 
	\begin{equation*}
	\mathrm{m}(\xi ;A):=\inf\biggl\{ \sum_{x\in \eta \cap A} \sum_{y\in \eta\cap  B_\lambda(x)} |v(x)-v(y)|^2 \ \biggl| \begin{array}{c}
	v:\eta\rightarrow \R\\
 \ v(x)=\xi \cdot x  \ \ \text{for all $x\in \eta$ such that}  \\ 
 \mathrm{dist}(x,\partial A)\leq 2\lambda
\end{array}	 \biggr\}
	\end{equation*}
The first thing we need is the following lemma on the asymptotic behavior of the ``cell problem'' $\mathrm{m}(\xi;Q_T)$ when the boundary values are fixed on a cube $Q_T$ and $T$ diverges. 

\begin{lemma}\label{lem:lim}
For any $\xi\in \R^2$ it holds
	\[
	\Xi |\xi|^2:= 
	 \lim_{T\rightarrow +\infty} \frac{\mathrm{m}(\xi ;Q_T)}{T^2}
	\]
where $\Xi$ is a constant independent of the realization.
\end{lemma}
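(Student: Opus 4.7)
The plan is to realize $A\mapsto \mathrm{m}(\xi;A)$ as an almost-subadditive stationary set function indexed by cubes of $\R^2$ and to apply a spatial subadditive ergodic theorem of Akcoglu--Krengel / Nguyen--Zessin type. Three ingredients are needed: a linear upper bound $\mathrm{m}(\xi;Q_T)\leq C|\xi|^2 T^2$, an almost-subadditive decomposition over essentially disjoint sub-cubes, and the stationarity/ergodicity of the Poisson point process under integer translations. The upper bound follows immediately by plugging in the admissible competitor $v(x)=\xi\cdot x$: each interacting pair contributes at most $|\xi|^2\lambda^2$, and the total number of such pairs in $Q_T$ is $O(T^2)$ almost surely by Proposition \ref{propo:count}. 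The same scaling argument, using $sv$ in place of $v$, shows $\mathrm{m}(s\xi;A)=s^2\mathrm{m}(\xi;A)$, so any limit must be quadratic in $\xi$.

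The heart of the argument is approximate subadditivity: if $A$ is partitioned into essentially disjoint cubes $A_1,\dots,A_N$, then
\[
\mathrm{m}(\xi;A)\leq \sum_{k=1}^N \mathrm{m}(\xi;A_k) + R_A,
\]
with a remainder $R_A$ of lower order as $|A|\to+\infty$. The gluing is transparent: pick almost-minimizers $v_k$ of $\mathrm{m}(\xi;A_k)$ and define $v:=v_k$ on $A_k$ and $v(x):=\xi\cdot x$ on the complement. Since each $v_k$ equals $\xi\cdot x$ on a $2\lambda$-collar of $\partial A_k$ by the boundary condition in the cell problem, the glued $v$ is affine on a neighborhood of every internal interface. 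Any interaction across an interface therefore involves only the affine function and contributes at most $|\xi|^2\lambda^2$, so $R_A$ is bounded by $C|\xi|^2$ times the number of points in the $O(\lambda)$-neighborhood of $\bigcup_k\partial A_k$. By Proposition \ref{propo:count} this number is $O(\sum_k|\partial A_k|)$, a boundary-type term of lower order compared with $T^2$ at fixed partition scale. I expect this subadditivity step to be the main obstacle, because one must carefully arrange the gluing so that only affine interactions occur across interfaces and quantify the collar contribution so it is genuinely negligible after the $T^{-2}$ normalization.

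Combining almost-subadditivity with stationarity (since $\tau_x\eta$ has the same distribution as $\eta$) and with the well-known ergodicity of the unit-intensity Poisson process under the action of integer translations, the subadditive ergodic theorem yields, almost surely,
\[
\lim_{T\to+\infty}\frac{\mathrm{m}(\xi;Q_T)}{T^2}=:\Xi(\xi),
\]
with $\Xi(\xi)$ deterministic. The quadratic homogeneity established above gives $\Xi(\xi)=\Xi(\xi/|\xi|)|\xi|^2$, and since the distribution of the Poisson process is invariant under rotations while $B_\lambda$ is spherically symmetric, $\Xi(\xi/|\xi|)$ is independent of the direction of $\xi$. This produces a single deterministic constant $\Xi$ such that $\Xi(\xi)=\Xi|\xi|^2$, as claimed.
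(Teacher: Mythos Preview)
Your proposal is correct and follows the same overall architecture as the paper: existence of the deterministic limit via a subadditive ergodic theorem, then quadratic homogeneity and rotational invariance to identify the limit as $\Xi|\xi|^2$. Two small points of comparison are worth making.

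First, you overcomplicate the subadditivity step. Thanks to the $2\lambda$-collar in the definition of $\mathrm{m}(\xi;\cdot)$, the glued competitor is \emph{exactly} admissible and every cross-interface interaction is already accounted for in the individual energies $\F_1(v_k;A_k)$ (recall that $\F_1(\cdot;A_k)$ sums over $x\in A_k$ but $y$ ranges over all of $\eta\cap B_\lambda(x)$). Hence $\mathrm{m}(\xi;A\cup B)\le \mathrm{m}(\xi;A)+\mathrm{m}(\xi;B)$ with no remainder, as the paper simply states; the step you flag as the ``main obstacle'' is in fact immediate.

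Second, your homogeneity argument---replacing $v$ by $sv$ to get $\mathrm{m}(s\xi;A)=s^2\mathrm{m}(\xi;A)$ at the level of the cell problem---is more direct than the paper's, which rescales the cloud $\eta\mapsto\eta/r$ and the range $\lambda\mapsto\lambda/r$ and compares the resulting limits. Conversely, your rotation step is a bit compressed: rotational invariance of $\eta$ gives $\mathrm{m}(\xi;Q_T)\stackrel{d}{=}\mathrm{m}(M\xi;M Q_T)$, so one still needs that the limit over rotated cubes coincides with the limit over axis-aligned cubes. The paper handles this by invoking the uniform version of the subadditive ergodic theorem (their Proposition~\ref{prop:ex}), which covers rotated and translated cubes simultaneously; a sandwich argument would also work.
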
 

\begin{remark}\rm
In light of Lemma \ref{lem:lim}, the constant $\Xi$ can be identified as
	\[
	\Xi:= \lim_{T\rightarrow +\infty} \frac{\mathrm{m} (e_1 ;Q_T)}{T^2}.
	\]
\end{remark}

The proof of Lemma \ref{lem:lim} comes as a consequence of Proposition \ref{prop:ex} below.  \smallskip

We denote by $M(\vartheta)$ a clockwise rotation of an angle $\vartheta$ around the origin. 

 \begin{proposition}\label{prop:ex}
For all $\xi\in \R^2$ there exists almost surely the limit
 	\[
 	f(\xi):=\lim_{T\rightarrow +\infty} \frac{\mathrm{m}(\xi;Q_T)}{T^d}
 	\]
independent of the realization. Moreover,  there exists a function $g: \R_+\rightarrow \R_+$ such that 
	\[
	\lim_{R\rightarrow +\infty} g(R)=+\infty
	\]
and such that, for any sequence $y_T$ with $|y_T|\leq Tg(y_T)$ and any rotation $M(\vartheta)$ we have
 	\begin{align*}
 \lim_{T\rightarrow +\infty} \frac{\mathrm{m} ((\xi; M(\vartheta)Q_T(y_T))}{T^d}=  \lim_{T\rightarrow +\infty} \frac{\mathrm{m} (\xi  ;Q_T)}{T^d}=f (\xi).
 	\end{align*}
 \end{proposition}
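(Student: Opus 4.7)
The plan is to apply the Akcoglu--Krengel subadditive ergodic theorem to the set function $A \mapsto \mathrm{m}(\xi; A)$, exploiting the stationarity, ergodicity, and isotropy of the Poisson process $\eta$. The first step is to verify subadditivity: given essentially disjoint cubes $Q^1,\ldots,Q^k$ whose union is $Q_T$, any family of admissible competitors $v_i$ for $\mathrm{m}(\xi; Q^i)$ glues into an admissible competitor for $\mathrm{m}(\xi; Q_T)$. Indeed, by the boundary condition each $v_i$ coincides with $\xi\cdot x$ on the $2\lambda$-neighbourhood of $\partial Q^i$, so the glued function is well-defined on $\eta$ and still equal to $\xi\cdot x$ on the $2\lambda$-neighbourhood of $\partial Q_T$; hence $\mathrm{m}(\xi; Q_T) \leq \sum_i \mathrm{m}(\xi; Q^i)$. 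The linear upper bound $\mathbb{E}[\mathrm{m}(\xi; Q_T)] \leq C|\xi|^2 T^d$ follows by testing against $v(x)=\xi\cdot x$ itself, whose energy is controlled by Corollary~\ref{cor:est}.

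Since the Poisson process is stationary and mixing (hence ergodic) under the action of $\R^d$-translations, and since $\mathrm{m}(\xi;\cdot)$ transforms covariantly under integer translations of $\eta$, the Akcoglu--Krengel theorem applied in this setting yields both the almost-sure and $L^1$ existence of the limit
\[
f(\xi) := \lim_{T\to+\infty} \frac{\mathrm{m}(\xi; Q_T)}{T^d},
\]
and ergodicity forces $f(\xi)$ to be deterministic. Since $\eta$ is isotropic, for any rotation $M(\vartheta)$ the variable $\mathrm{m}(\xi; M(\vartheta) Q_T)$ has the same law as $\mathrm{m}(\xi; Q_T)$; the subadditive argument applied to a grid of rotated cubes then gives an a.s.\ deterministic limit for the rotated sequence, which must coincide with $f(\xi)$ by equality in law.

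To handle translated cubes $Q_T(y_T)$ with centers moving with $T$, I would exploit the $L^1$ rate $\varphi(T):=\mathbb{E}\bigl|\mathrm{m}(\xi;Q_T)/T^d - f(\xi)\bigr| \to 0$ furnished by the subadditive ergodic theorem and choose $g: \R_+ \to \R_+$ diverging slowly enough that the cardinality $O(g(T)^d)$ of a covering of $\{|y|\leq T g(T)\}$ by integer translates $Q_T(y_j)$, multiplied by $\varphi(T)$, still vanishes along the dyadic scale $T_n=2^n$. A Markov inequality plus Borel--Cantelli then upgrades this to
\[
\sup_{|y|\leq T g(T)} \Bigl| \frac{\mathrm{m}(\xi; Q_T(y))}{T^d} - f(\xi) \Bigr| \longrightarrow 0 \quad \text{almost surely,}
\]
and a standard interpolation using the subadditivity of Step~1 fills in non-dyadic scales. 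This covers any sequence $y_T$ with $|y_T|\leq T g(|y_T|)$, and combined with the rotation invariance established above yields the full statement.

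The main obstacle is precisely this uniformization step. The Akcoglu--Krengel theorem only provides convergence along the canonical sequence of cubes centered at the origin, whereas the proposition demands a uniform limit over a polynomially growing family of translates and over arbitrary rotations. The construction of $g$ must delicately balance the count of translates against the (non-explicit) subadditive rate, and this quantitative upgrade from almost-sure to uniformly almost-sure convergence is the technical heart of the proof; rotation invariance itself, by contrast, is nearly free from the isotropy of $\eta$.
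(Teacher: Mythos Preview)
Your overall strategy---verify subadditivity of $A\mapsto\mathrm{m}(\xi;A)$, then invoke a subadditive ergodic theorem---is exactly what the paper does. The difference lies in how the uniform statement over moving centers and rotations is obtained.

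The paper does not construct $g$ by hand. Instead it observes the subadditivity $\mathrm{m}(\xi;A\cup B)\le\mathrm{m}(\xi;A)+\mathrm{m}(\xi;B)$ for essentially disjoint $A,B$, and then appeals directly to the \emph{uniform} version of the subadditive ergodic theorem due to Krengel (\cite{krengel1987uniform}, Theorem~1), following the argument in \cite[Lemma~5.1]{braides2019homogenization}. That theorem is precisely designed to output a function $g$ with $g(R)\to\infty$ such that the normalized limit exists a.s.\ and is deterministic uniformly over translates $|y_T|\le T g(|y_T|)$ and over rotations $M(\vartheta)$. So the ``technical heart'' you identify is absorbed into a single citation.

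Your alternative route---extract an $L^1$ rate $\varphi(T)$, pick $g$ slowly diverging, cover the admissible centers by $O(g(T)^d)$ lattice translates, and run Markov plus Borel--Cantelli along dyadic scales---is essentially a re-derivation of (a special case of) Krengel's uniform theorem. It is morally correct, but as written it has two soft spots. First, Borel--Cantelli needs the sequence $g(T_n)^d\varphi(T_n)$ to be \emph{summable} in $n$, not merely to vanish; you should say explicitly that $g$ is chosen so that, e.g., $g(2^n)^d\varphi(2^n)\le n^{-2}$. Second, you cover only integer translates $y_j$ and then gesture at ``standard interpolation'' for general $y$; this step does go through via subadditivity (sandwich $Q_T(y)$ between slightly smaller and larger unions of lattice cubes), but it is not entirely free and deserves a line. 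If you prefer not to reprove Krengel's result, a clean substitute is simply to cite it, as the paper does.
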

\begin{proof}
Note that 
	\[
	\mathrm{m}(\xi;A\cup B)\leq \mathrm{m}(\xi;A)+\mathrm{m}(\xi;B)
	\]
whenever $|A\cap B|=0$.  In particular,  by arguing exactly as in the proof of \cite[Lemma 5.1]{braides2019homogenization},  invoking the uniform version of the sub-additive ergodic theorem, \cite[Theorem 1]{krengel1987uniform}, we can achieve the existence of $g$ and $f$ such that for any family of translations $\{y_T\}_{T\in \N}$ satisfying $|y_T|\leq T g(|y_T|)$ it holds
	\[
	f(\xi)=\lim_{T\rightarrow +\infty} \frac{\mathrm{m}(\xi ;M(\vartheta) Q_T(y_T))}{T^d}.
	\]
as desired.
\end{proof}
We now focus on proving Lemma \ref{lem:lim}. During the proof we find it convenient to explicit the dependence of $\F_{1}$ and  $\mathrm{m}_{\delta}$ on $\eta$ and $\lambda$. In particular, if $\eta'$ is another Poisson point process, $\lambda'\in \R$ we write
	\[
	\F_1(v;A,\eta'\lambda'):=\sum_{x\in \eta'\cap A} \sum_{y\in \eta'\cap B_{\lambda'(x)}} |v(x)-v(y)|^2,
	\]
and
	\begin{equation*}
	\mathrm{m}(\xi; A,\eta',\lambda'):=\inf\left\{ \F_1(v;A,\eta'\lambda') \ \left| \begin{array}{c}
	v:\eta'\rightarrow \R\\
 \ v(x)=\xi\cdot x  \ \ \text{for all $x\in \eta'$ such that}  \\ 
 \mathrm{dist}(x,\partial A)\leq 2\lambda'
\end{array}	\right. \right\}.
	\end{equation*}
We also refer, whenever needed, to $f_{\eta'}(\xi,\lambda')$ as the limit in $T$ of $\frac{\mathrm{m}(\xi; A,\eta',\lambda')}{T^2}$ (which exists because the argument in Proposition \ref{prop:ex} applies to a generic Poisson point process $\eta'$). 
\begin{proof}[Proof of Lemma \ref{lem:lim}]
We argue as follows. We prove the following two relations on $f$:
\begin{align}
f (M(\vartheta)\xi)&=f(\xi) \ \ \ \text{for all $\vartheta\in [0,2\pi)$} \label{a}\\
f(r\xi)&=r^2f(\xi)  \ \ \ \text{for all $r\in \R_+$} \label{d}.
\end{align} 
Equations \eqref{a} and \eqref{d} tells us that, setting
$
	\Xi:=f(e_1)
$
then
$
	f(\xi)=\Xi |\xi|^2
$.
We proceed then to the proof of \eqref{a} and \eqref{d} separately.
\smallskip

\textbf{Step one:} \textit{invariance by rotation}. Observe that
\begin{align*}
	\F_1(v;Q_T,\eta,\lambda)&=\sum_{x\in \eta\cap Q_T} \sum_{y\in \eta\cap B_{\lambda}(x)} |v(x)-v(y)|^2\\
	&=\sum_{z\in M(\vartheta) \eta\cap (M(\vartheta) Q_T)} \sum_{w\in M(\vartheta) \eta\cap B_{\lambda}(z)} |v(M(-\vartheta) z)-v(M(-\vartheta) w)|^2\\
	&=\F_1(v(M(-\vartheta)\cdot ), M(\vartheta) Q_T, M(\vartheta) \eta, \lambda).
\end{align*}
If $v=\xi \cdot x$ on $(\partial Q_T)_{2\lambda}$, then $v(M(-\vartheta)x)=\xi \cdot( M(-\vartheta)x)$ for $x\in  (\partial (M(\vartheta)Q_T))_{2\lambda}$. In particular
	\begin{align*}
	\mathrm{m}  (M(-\vartheta)\xi;M(-\vartheta)Q_T, \eta,\lambda)=\mathrm{m}(\xi; Q_T, M(\vartheta)\eta,\lambda)
\end{align*}	 
By dividing by $T^2$ and taking the limit, recalling that the limit exists (Proposition \ref{prop:ex}) , we get
	\begin{align}
	f_{M(\vartheta)\eta}(\xi,\lambda)&=\lim_{T\rightarrow +\infty} \frac{\mathrm{m} (\xi; Q_T, M(\vartheta)\eta,\lambda)}{T^2}\nonumber\\
	&=\lim_{T\rightarrow +\infty} \frac{\mathrm{m} (M(-\vartheta)\xi; M(-\vartheta)Q_T, \eta,\lambda)}{T^2}=f_{\eta}(M(-\vartheta)\xi,\lambda).\label{a1}
	\end{align}
Noting that
$M(\vartheta)\eta=\eta$ in distribution,
we conclude that
$\mathrm{m}(\xi;Q,M(\vartheta) \eta,\lambda)=	\mathrm{m}(\xi;Q_T, \eta,\lambda)$ in distribution.
This equality in distribution implies that
$	f_{\eta}(\xi,\lambda)=f_{M(\vartheta) \eta}(\xi,\lambda)$ in distribution.
Since $f_{\eta}$ and  $f_{M(\vartheta)\eta}$ are independent of the realizations, as stated in Proposition \ref{prop:ex}, we conclude that
	\begin{equation}\label{a2}
	f_{\eta}(\xi,\lambda)=f_{M(\vartheta)\eta}(\xi,\lambda).
	\end{equation}
Thus, by combining \eqref{a1}, \eqref{a2} we get \eqref{a}.
\smallskip

\textbf{Step two:} \textit{positive homogeneity of degree two}. Fix $r\in \R_+$ and observe that
	\begin{align*}
	\F_1(v;Q_T,\eta,\lambda) =&\sum_{x\in \eta\cap Q_T}\sum_{y\in B_{\lambda}(x)} |v(x)-v(y)|^2\\
	=&\sum_{z\in \eta/r\cap Q_{T/r}}\sum_{w \in B_{\lambda/r}(z)} |v(r z)-v(r w)|^2\\
	=&r^2 \F_1(\sfrac{v(r\cdot)}{r}; Q_{T/r},\eta/r,\lambda/r).
	\end{align*}
If $v(x)=\xi \cdot x$ on $x\in (\partial Q_T)_{2\lambda}$ then $\frac{v(r x)}{r}=\xi \cdot x$ on $x \in (\partial Q_{T/r})_{2\lambda/r}$. In particular, we also get that
$
	\mathrm{m} (\xi ;Q_T,\eta,\lambda)=	r^2\mathrm{m}  (\xi ;Q_{T/r},\eta/r,\lambda/r)
$.
By dividing by $T^2$ and taking the limit, using the existence result in Proposition \ref{prop:ex}, we get (setting $\tilde{T}=T/r$)
	\begin{align}
	f_{\eta}(\xi,\lambda)&=\lim_{T\rightarrow +\infty} \frac{\mathrm{m} (\xi ;Q_T,\eta,\lambda)}{T^2}=\lim_{T\rightarrow +\infty} \frac{r^2\mathrm{m} (\xi ;Q_{T/r},\eta/r,\lambda/r)}{T^2}\nonumber\\
	&=\lim_{\tilde{T}\rightarrow +\infty} \frac{\mathrm{m} (\xi ;Q_{\tilde{T}},\eta/r,\lambda/r)}{\tilde{T}^2}=f_{\eta/r}(\xi,\lambda/r).\label{b}
	\end{align}
Analogously we have
	\[
	\F_1(v;Q_T,\eta,\lambda)=r^4\F_1(\sfrac{v(r\cdot )}{r^2};Q_{T/r},\eta/r,\lambda/r).
	\]
Thus, if $v(x)=(r\xi\cdot x)$ on $(\partial Q_{T})_{2\lambda}$ then $\frac{v(rx)}{r^2}=\xi\cdot x$ on $(\partial Q_{T/r})_{2\lambda/r}$. As a consequence we also have the equality
	\begin{align*}
	\mathrm{m} (r \xi  ;Q_T,\eta,\lambda)=	r^4\mathrm{m}  (\xi;Q_{T/r},\eta/r,\lambda/r),
	\end{align*}
which, dividing by $T$ and sending to $T\rightarrow +\infty$,  still by Proposition \ref{prop:ex} yields
	\begin{align}
	f_{\eta}(r \xi,\lambda)&=\lim_{T\rightarrow}\frac{ \mathrm{m}(r \xi ;Q_T,\eta,\lambda)}{T^2}=r^4\lim_{T\rightarrow +\infty} \frac{\mathrm{m}  (\xi ;Q_{T/r},\eta/r,\lambda/r)}{T^2}\nonumber\\
	&=r^2\lim_{\tilde{T}=T/r \rightarrow +\infty} \frac{\mathrm{m} (\xi ;Q_{\tilde{T}},\eta/r,\lambda/r)}{\tilde{T}}\nonumber\\
	&=r^2 f_{\eta/r}(\xi,\lambda/r)\label{c}.
	\end{align}
By combining \eqref{b},\eqref{c} we thus get $
	f_{\eta}(r \xi,\lambda)=r^2 f_{\eta}(\xi,\lambda)$; that is, \eqref{d}.
\end{proof}

\begin{figure}
\includegraphics[scale=0.6]{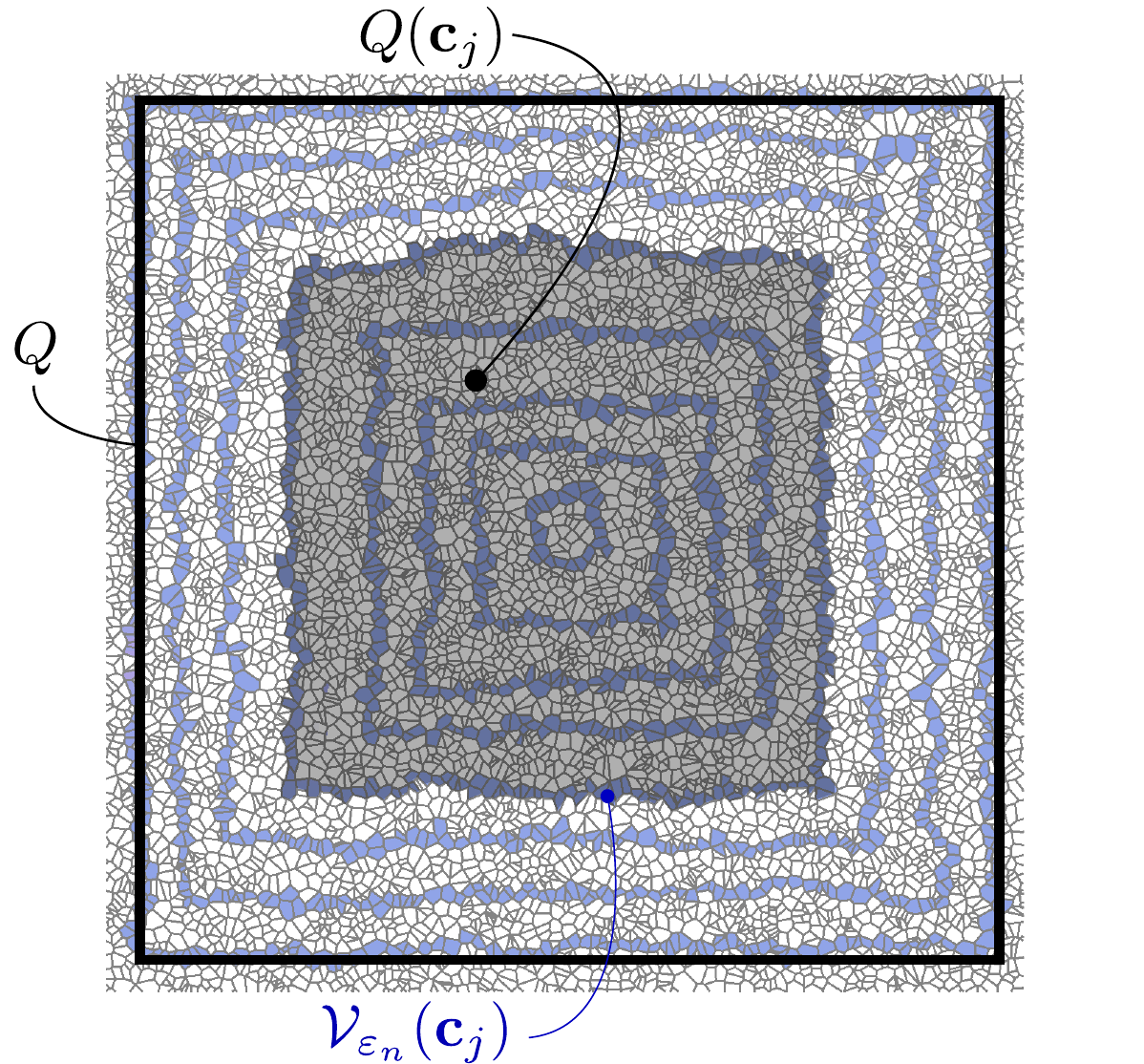}\\
\includegraphics[scale=0.5]{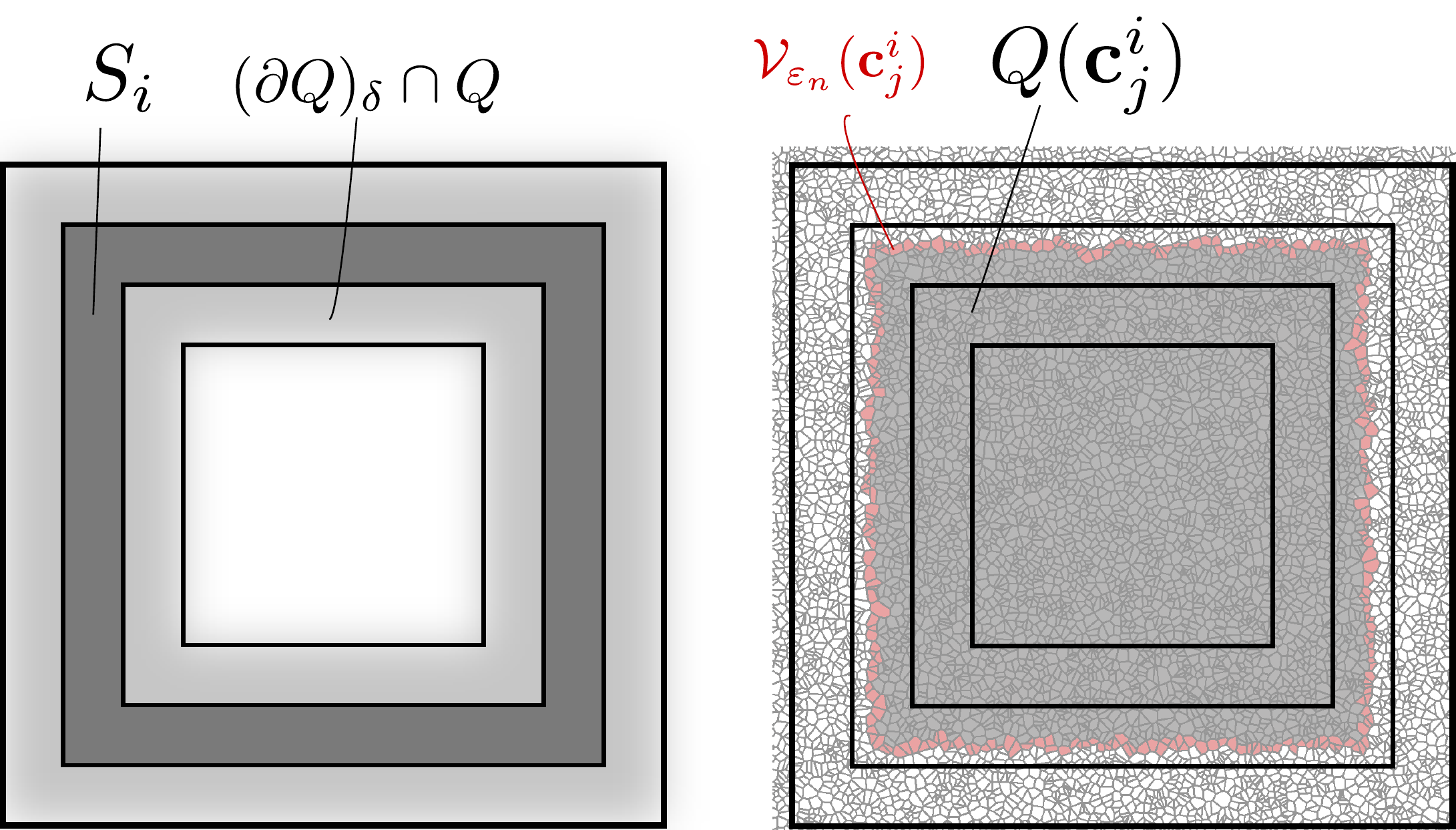}
\caption{In the proof of Proposition \ref{prop:bndDATA} we refer to the notation depicted here. .}\label{fig:bdFix}
\end{figure}
\subsection{A boundary-value fixing argument}

We now concentrate on a key ingredient of these types of results; that is, the possibility of  modifying  boundary values.   Before proceeding, we introduce the notation by referring to Figure \ref{fig:bdFix}.   We fix $\delta>0,N>0$ and we divide $(\partial Q_1)_{\delta}\cap Q$ (depicted in soft grey on the left in Figure \ref{fig:bdFix}) in $N$ sectors $S_i$ of size $\delta/N$ (see one of them in dark grey on the left).  Given $G_{\e_n,t_n}\in \mathcal{G}_{t_n}(\Upsilon;\eta_n)$ inside each sector we can find $\mathbf{c}_1^i,\ldots, \mathbf{c}_{K_n}^i$ disjoint ``annuli'' (one of them is depicted in red on the right) composed of portions of paths from the grid and all contained in $S_i$.  Still the good properties of the grid allows us to estimate $K_n\approx \delta/\e_n$.  We consider this annuli labelled increasingly from the outer one, and we call $Q(\mathbf{c}^i_j)$ the portion of the square bounded by the Voronoi cells of $\mathbf{c}^i_j$ and including them (on the right: the union of the region depicted in dark grey and all the red regions).  Clearly $Q(\mathbf{c}^i_{j'})\subset Q(\mathbf{c}^i_j)$ for $j'\geq j$.  We use this geometry to build cut off functions $h_i$ that we will use to change the boundary data of a sequence of functions $u_n$ converging to $u$ in the sense of Definition \ref{def:conv}.

\begin{proposition}\label{prop:bndDATA}
Let $U\subset Q$, be an open set with Lipschitz boundary.  Let $\{x_n\}_{n\in \N}$ be a sequence of points and pick $\{G_{\e_n,t_n}\in \mathcal{G}_{t_n}(\Upsilon;\eta_n-\e_n x_n)\}_{n\in \N}$ a sequence of grids and a sequence of functions $\{u_n\in L^2(Q;\e_n(\eta-x_n) ) \}_{n\in \N}$ such that
	\[
	  \lim_{n\rightarrow +\infty} \int_{\mathcal{V}_{\e_n}(G_{\e_n,t_n})\cap U} |\hat{u}_{n}(x)-u(x)|^2\d x =0
	\]
for some $u\in C^1(U)$. Suppose that the sequence also satisfies
	\[
	\sup_{n\in \N} \{\F_n(u_n;A,\eta_{\e_n}-x_n\e_n),\lambda)\}=\zeta <+\infty
	\]
for a supset $A\supset Q$. Then, for any $\delta>0$  there exists a sequence $\{v_n\}_{n\in \N}$ such that 
\begin{equation*}
\begin{array}{c}
	\displaystyle v_n=u \ \ \ \ \text{on $(\eta_{\e_n}-x_n\e_n) \cap (\partial U)_{\delta} \cap U   $},\\
	\text{}\\
\displaystyle \lim_{n\rightarrow +\infty} \int_{\mathcal{V}_{\e_n}(G_{\e_n,t_n})\cap U} |\hat{v}_n(x)-u(x)|^2\d x =0
\end{array}
\end{equation*}
and 
   \begin{equation}\label{eqn:finalBNDfixing}
	\liminf_{n\rightarrow +\infty} \F_n(v_n;U,\eta_{\e_n}-x_n\e_n,\lambda)  \leq \liminf_{n\rightarrow +\infty} \F_n(u_n;U,\e_n(\eta-x_n),\lambda) + CP(U) \|\nabla u\|_{\infty}^2\delta
   \end{equation}
	where $C=C(\alpha,\lambda)$ depends on $\alpha$ and $\lambda$ only and $P(U)$ denotes the perimeter of $U$. 
\end{proposition}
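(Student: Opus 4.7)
The plan is a Fonseca--M\"uller style slicing argument tailored to the Poisson cloud, exploiting the $K_n\gtrsim \delta/\e_n$ concentric annular paths $\mathbf{c}_1^i,\ldots,\mathbf{c}_{K_n}^i$ inside each sector $S_i$ as candidate discrete interfaces between $u_n$ and the boundary datum $u$. For each multi-index $\mathbf{j}=(j_1,\ldots,j_N)$ I would define
\[
v_n^{\mathbf{j}}(x):=\begin{cases} u(x),& x\in\bigcup_i S_i\cap Q(\mathbf{c}_{j_i}^i),\\ u_n(x),&\text{otherwise,}\end{cases}
\]
so that $v_n^{\mathbf{j}}\equiv u$ on the $\delta$-strip and $v_n^{\mathbf{j}}\equiv u_n$ in the bulk, for \emph{any} choice of $\mathbf{j}$; the final $v_n$ will arise from a pigeonhole selection $\mathbf{j}=\mathbf{j}_n$. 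Decomposing
\[
\F_n(v_n^{\mathbf{j}};U)\le \F_n(u_n;U)+\F_n\Bigl(u;\bigcup_i S_i\cap Q(\mathbf{c}_{j_i}^i)\Bigr)+\sum_{i=1}^N T_{j_i}^i,
\]
where $T_j^i$ collects the contributions of pairs $(x,y)\in\eta_n^2$ with $|x-y|\le\lambda\e_n$ straddling $\mathbf{c}_j^i$, the second term is handled by Corollary~\ref{cor:est} applied to the $C^1$ function $u$:
\[
\limsup_n\F_n\Bigl(u;\bigcup_i S_i\cap Q(\mathbf{c}_{j_i}^i)\Bigr)\le C\|\nabla u\|_\infty^2\Bigl|\bigcup_i S_i\cap Q(\mathbf{c}_{j_i}^i)\Bigr|\le CP(U)\|\nabla u\|_\infty^2\delta,
\]
which is exactly the permitted excess in~\eqref{eqn:finalBNDfixing}.

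The discrete Leibniz bound $|u_n(x)-u(y)|^2\le 2|u_n(x)-u_n(y)|^2+2|u_n(y)-u(y)|^2$ applied to each straddling pair yields
\[
T_j^i\le 2\F_n(u_n;(\mathbf{c}_j^i)_{\lambda\e_n})+2\!\!\sum_{y\in\eta_n\cap(\mathbf{c}_j^i)_{\lambda\e_n}}\!\!\eta_n(B_{\lambda\e_n}(y))|u_n(y)-u(y)|^2.
\]
Since the tubular neighbourhoods $(\mathbf{c}_j^i)_{\lambda\e_n}$ are pairwise disjoint as $j$ varies within a sector (property (e) of Definition~\ref{def:pathConn}), summing over $j$ controls $\sum_{j=1}^{K_n}T_j^i\le 2\zeta+ 2\Sigma_i^n$ with $\Sigma_i^n:=\sum_{y\in\eta_n\cap S_i}\eta_n(B_{\lambda\e_n}(y))|u_n(y)-u(y)|^2$. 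Pigeonhole then furnishes $j_i^\star$ with $T_{j_i^\star}^i\le K_n^{-1}(2\zeta+2\Sigma_i^n)$, and the desired vanishing $\sum_i T_{j_i^\star}^i\to 0$ reduces to showing $\Sigma^n:=\sum_i\Sigma_i^n=o(K_n)=o(\delta/\e_n)$.

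The control of $\Sigma^n$ is the main technical obstacle. Writing $|u_n(y)-u(y)|^2=|C(y;\eta_n)|^{-1}\int_{C(y)}|\hat u_n(x)-u(y)|^2\mathrm{d}x$ and splitting $|\hat u_n(x)-u(y)|^2\le 2|\hat u_n(x)-u(x)|^2+2\|\nabla u\|_\infty^2|x-y|^2$, together with the regularity of Voronoi cells along grid paths (properties (a) and (f) giving $|C(y)|\gtrsim\e_n^2$, $\mathrm{diam}\,C(y)\lesssim\e_n$ and $\eta_n(B_{\lambda\e_n}(y))\le \alpha^{-1}\lambda^2$), give the na\"ive bound
\[
\Sigma^n\lesssim\e_n^{-2}\int_{\mathcal V_{\e_n}(G_{\e_n,t_n})\cap U}|\hat u_n-u|^2\mathrm{d}x+\|\nabla u\|_\infty^2 P(U)\delta.
\]
The first summand need not be $o(\delta/\e_n)$ from the bare hypothesis $\int|\hat u_n-u|^2\to 0$, but since~\eqref{eqn:finalBNDfixing} is a $\liminf$ statement one can extract a subsequence along which $\int|\hat u_n-u|^2\ll\e_n\delta$ (via the diagonal procedure of Lemma~\ref{lem:diagonal}); along this subsequence $K_n^{-1}\Sigma^n\to 0$, whence $\liminf_n\F_n(v_n^{\mathbf{j}_n};U)\le\liminf_n\F_n(u_n;U)+CP(U)\|\nabla u\|_\infty^2\delta$. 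Finally, since $v_n^{\mathbf{j}_n}$ agrees with $u\in C^1$ on the $\delta$-strip and with $u_n$ outside, Lemma~\ref{lem:tecConv} combined with the grid $L^2$-convergence $u_n\to u$ ensures that the modified sequence $v_n^{\mathbf{j}_n}$ still converges to $u$ in the sense of Definition~\ref{def:conv}, completing the proof.
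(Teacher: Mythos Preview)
Your sharp-cut construction has a genuine gap. After pigeonholing on the annuli $\mathbf{c}_j^i$ you obtain a single factor $K_n^{-1}\approx \e_n/\delta$, whereas the quantity it multiplies, once correctly reduced to grid points, is essentially $\sum_{z\in G_n}|u_n(z)-u(z)|^2$, which is only $o(\e_n^{-2})$ from the $L^2$-on-grid hypothesis. Hence $K_n^{-1}\Sigma^n=o(\e_n^{-1})$, not $o(1)$. Your attempted remedy---extracting a subsequence along which $\int|\hat u_n-u|^2\ll\e_n\delta$---is unjustified: the hypothesis gives no rate, and Lemma~\ref{lem:diagonal} is a two-parameter diagonalisation device, not a rate-extraction tool; there is simply no reason the subsequence realising $\liminf_n\F_n(u_n;U)$ should also satisfy $\int|\hat u_n-u|^2=o(\e_n)$. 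The paper circumvents this by replacing the sharp cut with a \emph{graded} cutoff $h_n$ taking the $K_n$ values $0,\tfrac{1}{K_n},\ldots,1$ across the successive annuli, and setting $v_n=(1-h_n)u+h_n u_n$. For straddling pairs one then has $(h_n(x)-h_n(y))^2=K_n^{-2}\approx\e_n^2$, so the cross term carries an extra factor of $\e_n$ which exactly compensates the $\e_n^{-2}$ coming from the grid sum; no rate is needed. The role of the $N$ sectors in the paper is different from yours: it is a pigeonhole on \emph{sectors} (to make the $u$- and $u_n$-energies carried by the transition layer $\le C/N$), followed by the graded cutoff using \emph{all} $K_n$ annuli in the chosen sector---not a second pigeonhole on annuli.

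There is also a secondary issue in your bound on $\Sigma_i^n$: you invoke Voronoi-cell regularity $|C(y)|\gtrsim\e_n^2$ and the neighbour bound $\eta_n(B_{\lambda\e_n}(y))\le\alpha^{-1}\lambda^2$ for \emph{every} $y\in\eta_n\cap S_i$, but properties (a) and (f) of Definition~\ref{def:pathConn} apply only to points on, or within $3\lambda\e_n$ of, grid paths, not to arbitrary cloud points. The paper handles this by relating $|u_n(x)-u(x)|$ for a non-grid point $x$ near an annulus to a nearby grid point $z_x\in\mathbf{c}_j$ through the triangle inequality, so that only energy increments $|u_n(x)-u_n(z_x)|^2$, Lipschitz increments $|u(x)-u(z_x)|^2\le\|\nabla u\|_\infty^2\lambda^2\e_n^2$, and \emph{grid} values $|u_n(z_x)-u(z_x)|^2$ ever appear.
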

\begin{proof}
For the sake of simplicity we will prove the result only in the case $U=Q$ since the general case results only in a heavier notation.  So we assume $U=Q$ and we fix $\delta>0$ and $N>0$.  Consider
	\[
	S_{i}:=Q_{1-\frac{(i-1)}{N}\delta}\setminus Q_{1-\frac{i}{
N}\delta}, \ \ \ i=1,\ldots, N.
	\] 
For any fixed $n>0$, if $G_{\e_n,t_n}\in  \mathcal{G}_{t_n}(\Upsilon;\eta_n-\e_n x_n)$ is a regular grid, by joining the paths of the grid suitably,  as in the proof of Theorem \ref{thm:pathConn} (see Appendix)
%
%
for any $S_i$ we can find (and eventually relabel) annuli $\mathbf{c}^i_1,\ldots,\mathbf{c}^i_{K_n}$ where
	\[
	\frac{\delta }{\Upsilon \e_n} \leq K_n\leq \frac{\Upsilon\delta  }{\e_n}
	\] 
for a $\Upsilon$ uniform in $n$ (see Figure \ref{fig:bdFix}). We moreover observe,  due to the properties of the grid, that
	\begin{align}
 \mathrm{dist}(\mathbf{c}_j^i, \mathbf{c}_{j'}^{i'} )\geq 3\lambda \e_n \ \ \ \ &   \text{for all $i,j,i'j'$}; \label{prop1}\\
\mathrm{dist}(\mathbf{c}_j^i,\partial S_i)  \geq 3\lambda \e_n \ \ \ \ &   \text{for all $i,j$}.\label{prop2}
	\end{align} 
Moreover, we have
	\begin{align}\label{prop3}
	\text{if $x\in (\mathbf{c}_j^i)_{3\lambda\e_n}\cap \tilde{\eta}_n$ then $\tilde{\eta}_n(B_{\lambda\e_n}(x))\leq \frac{1}{\alpha} \lambda^2$}. 
	\end{align}
	
	\begin{figure}
\includegraphics[scale=.7]{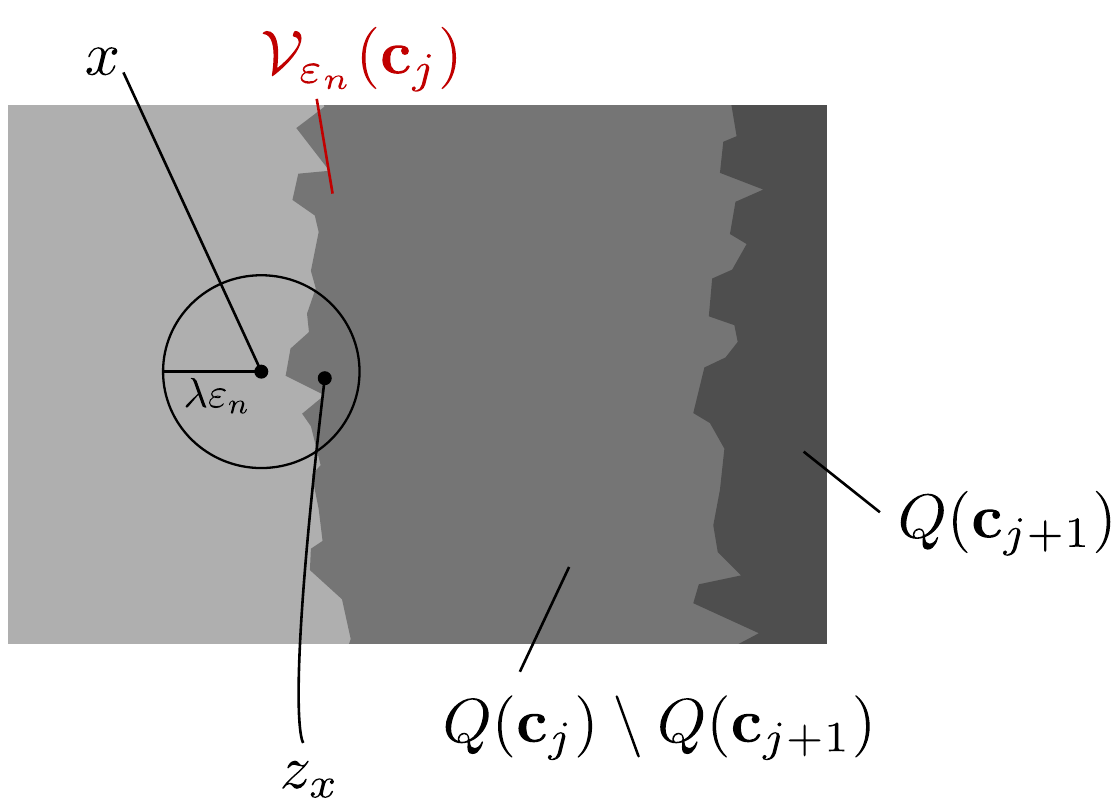}
\caption{The situation in the proof of Proposition \ref{prop:bndDATA}.  The difference $h_n(x)-h_n(y)$ is different from $0$ only on pairs $(x,y)$ satisfying the relation \eqref{ghig}. }\label{fig:Pointcrown}
\end{figure}
Both properties \eqref{prop1} and \eqref{prop2} derive from property (e) of Definition \ref{def:pathConn} and from the fact that $\mathbf{c}_j^i$ is made of paths of $G_{\e_n,t_n}$.  Property \eqref{prop3} is instead consequence of Property (f) of  Definition \ref{def:pathConn}.  Let $Q(\mathbf{c}^i_j)$ denote the portion of the square bounded by $\mathbf{c}^i_j$ and containing the Voronoi cells of points in $\mathbf{c}_j^i$ (we refer to Figures \ref{fig:bdFix} and \ref{fig:Pointcrown}).  For $i=1,\ldots, N$ we set
\begin{equation}
h^i_{n}(x):=
\left\{
\begin{array}{ll}
0 \ \ \ \ \ &  \text{if $x\in Q(\mathbf{c}^i_1)^c$} \\
\frac{s}{K_n} \ \ \ \ \ &  \text{if $x\in Q(\mathbf{c}^i_s)\setminus Q(\mathbf{c}^i_{s-1})$}\\
1 \ \ \ \ \ &  \text{if $x\in Q(\mathbf{c}_{K_n})$}.
\end{array}
\right.
\end{equation}
Finally we adopt the shorthand $\tilde{\eta}_n:=\e_n(\eta-x_n)$. Note that
\begin{align*}
	\sum_{i=1}^N \sum_{x\in S_i\cap\tilde{\eta}_n}& \sum_{y\in B_{\lambda\e_n}(x)\cap \tilde{\eta}_n} (u_n(x)-u_n(y))^2
\leq 2 \F_n(u_n;Q \setminus Q_{1-\delta},\tilde{\eta}_n,\lambda),
	\end{align*}
where the constant $2$ arises since the interaction around $\partial S_i$ are counted twice when summed up over $i$.  We can thus pick $i=1,\ldots, N$ for which it holds
	\begin{align*}
	\sum_{x\in S_i\cap\tilde{\eta}_n}& \sum_{y\in B_{\lambda\e_n}(x)\cap \tilde{\eta}_n} (u_n(x)-u_n(y))^2+(u(x)-u(y))^2\\
	\leq& \frac{2}{N} (\F_n(u_n;Q\setminus Q_{1-\delta},\tilde{\eta}_n,\lambda)+\F_n(u;Q\setminus Q_{1-\delta},\tilde{\eta}_n,\lambda))
	\end{align*}

Then, we set $h_{n}=h^i_{n}$, $\mathbf{c}_j=\mathbf{c}^i_j$ and
\begin{align*}
v_{n}(x):=(1-h_{n}(x))u(x)+h_{n}(x)u_n(x)
\end{align*}
Set also $G_n:=G_{\e_n,t_n}$.  We immediately conclude that
 	\begin{align*}
 	\e_n^2\sum_{x\in G_{n}\cap Q} (v_n(x)-u(x))^2&=\e_n^2 \sum_{x\in G_{n} \cap S_i}  (v_n(x)-u(x))^2(1-h_n(x))^2\\
 	&\leq  \e_n^2  \sum_{x\in G_{n} \cap Q}  (u_n(x)-u(x))^2,
 	\end{align*}
giving $v_n\rightarrow u$. We also note that
\begin{align*}
v_{n}(x)-v_{n}(y)=&(1-h_n(y))(u(y)-u(x))+h_{n}(y) (u_n(x)-u_n(y))\\
&+(h_{n}(x)-h_{n}(y))(u_n(x)-u(x)).
\end{align*}
Thanks to the structure of $v_{n}$ we thus have
	\begin{align*}
	\sum_{x\in Q\cap\tilde{\eta}_n} \sum_{y\in B_{\lambda\e_n}(x)\cap \tilde{\eta}_n} (v_{n}(x)-v_{n}(y))^2 \leq& \sum_{x\in (Q\setminus S_i)\cap\tilde{\eta}_n} \sum_{y\in B_{\lambda\e_n}(x)\cap \tilde{\eta}_n} (v_{n}(x)-v_{n}(y))^2 \\
	&+ \sum_{x\in S_i\cap\tilde{\eta}_n} \sum_{y\in B_{\lambda\e_n}(x)\cap \tilde{\eta}_n} (v_{n}(x)-v_{n}(y))^2  \\
&\leq 	
	\F_n(u_n;Q,\tilde{\eta}_n,\lambda)+\F_n(u;Q\setminus Q_{1-\delta},\tilde{\eta}_n,\lambda)\\
&+\sum_{x\in S_i\cap\tilde{\eta}_n} \sum_{y\in B_{\lambda\e_n}(x)\cap \tilde{\eta}_n} (v_{n}(x)-v_{n}(y))^2,
	\end{align*}
	where the second inequality exploits property \eqref{prop2} and the fact that $v_n$ agrees with $u$ and $u_n$ on a slightly bigger sets than the two connected components of $Q\setminus S_i$.  The choice of $i$ now allows to estimate
	\begin{align*}
	\sum_{x\in S_i\cap \tilde{\eta}_n}\sum_{y\in B_{\lambda\e_n}(x)\cap \tilde{\eta}_n} (v_{n}(x)-v_{n}(y))^2 \leq& \frac{C}{N}( \F_n(u_n;Q\setminus Q_{1-\delta},\tilde{\eta}_n,\lambda)+\F_n(u;Q\setminus Q_{1-\delta},\tilde{\eta}_n,\lambda))\\
	&+C\sum_{x\in S_i\cap\tilde{\eta}_n}(u_n(x)-u(x))^2\sum_{y\in B_{\lambda\e_n}(x)\cap\tilde{\eta}_n} (h_{n}(x)-h_{n}(y))^2,
	\end{align*}
where $C$, here and in the rest of the proof,  stands for a constant depending on $\alpha,\lambda$ only and that may vary from line to line.  We now exploit property \eqref{prop1}: the annuli paths lie at a certain distance between each other and therefore we have that
	\begin{align}
	(h_{n}(x)-h_{n}(y))^2\ca_{B_{\lambda\e_n}(x)}(y)&\leq C \e_n^2 \ \ \ \begin{array}{c}
	\text{if  $x\notin Q(\mathbf{c}_j)$,  $y\in \tilde{\eta}_n\cap  Q(\mathbf{c}_j)$  for some $j$ }\\
	\text{and $|x-y|\leq \lambda\e_n$}
	\end{array}
	\label{ghig}\\
	(h_{n}(x)-h_{n}(y))^2\ca_{B_{\lambda\e_n}(x)}(y) &=0 \ \ \ \ \ \ \ \text{otherwise}.
	\end{align}
In particular, we get
\begin{align*}
\sum_{x\in S_i\cap \tilde{\eta}_n}&(u_n(x)-u(x))^2\sum_{y\in B_{\lambda\e_n}(x)\cap \tilde{\eta}_n} (h_{n}(x)-h_{n}(y))^2\\
=& \sum_{\substack{(x,y)\\ \text{satisfies \eqref{ghig}} }} (u_n(x)-u(x))^2 (h_{n}(x)-h_{n}(y))^2\\
\leq &C\frac{\e_n^2}{\delta^2} \sum_{j=1}^{K_n} \sum_{\substack{x\in\tilde{\eta}_n: \\
x\in  Q(\mathbf{c}_j)^c\cap (\mathbf{c}_j)_{\lambda\e_n}}}  \sum_{y\in B_{\lambda\e_n}(x)\cap Q(\mathbf{c}_j) } (u_n(x)-u(x))^2\\
\leq &C\frac{\e_n^2}{\delta^2} \sum_{j=1}^{K_n} \sum_{\substack{x\in\tilde{\eta}_n: \\
x\in  Q(\mathbf{c}_j)^c\cap (\mathbf{c}_j)_{\lambda\e_n}} }  (u_n(x)-u(x))^2,
\end{align*}
where the last equality follows from property \eqref{prop3} of the annuli.
For $x\in \tilde{\eta}_n\cap  Q(\mathbf{c}_j)^c\cap (\mathbf{c}_j)_{\lambda\e_n}$, let $z_x\in \mathbf{c}_j$ be such that $|x-z_x|\leq \lambda \e_n $ (see Figure \ref{fig:Pointcrown}). 
 Then 
	\begin{align*}
 \sum_{\substack{x\in\tilde{\eta}_n: \\
x\in  Q(\mathbf{c}_j)^c\cap (\mathbf{c}_j)_{\lambda\e_n}} }  (u_n(x)-u(x))^2\leq &\ C\Bigl(  \sum_{x\in Q(\mathbf{c}_j)^c\cap (\mathbf{c}_j)_{\lambda\e_n}}  (u_n(x)-u_n(z_x))^2+\hskip-.8cm  \sum_{\substack{x\in\tilde{\eta}_n: \\
x\in  Q(\mathbf{c}_j)^c\cap (\mathbf{c}_j)_{\lambda\e_n}} }  (u(x)-u(z_x))^2\\
 &+   \sum_{\substack{x\in\tilde{\eta}_n: \\
x\in  Q(\mathbf{c}_j)^c\cap (\mathbf{c}_j)_{\lambda\e_n}} }(u_n(z_x)-u(z_x))^2\Bigr)\\
 \leq & C  \sum_{x\in \mathbf{c}_j}  \sum_{y\in B_{\lambda\e_n}(x)\cap \tilde{\eta}_n} (u_n(x)-u_n(y))^2+(u(x)-u(y))^2\\
  &+C \sum_{x\in \mathbf{c}_j }  (u_n(x)-u(x))^2 ,
	\end{align*}
where the last inequality follow from property \eqref{prop3} and the fact that $|x-z_x|\leq \lambda\e_n$.  Thus, by summing up over $j=1,\ldots ,K_n$ we obtain
	\begin{align*}
	 \sum_{j=1}^{K_n} \sum_{\substack{x\in\tilde{\eta}_n: \\
x\in  Q(\mathbf{c}_j)^c\cap (\mathbf{c}_j)_{\lambda\e_n}} }  (u_n(x)-u(x))^2\leq &\ C \F_n(u_n;Q ,\tilde{\eta}_n,\lambda) + C \F_n(u;Q ,\tilde{\eta}_n,\lambda)\\
  &+C \sum_{x\in G_n }  (u_n(x)-u(x))^2 
\end{align*}	 Then we conclude that
\begin{align*}
	\F_n(v_n; Q,\tilde{\eta}_n,\lambda) \leq&\ \F_n(u_n; Q,\tilde{\eta}_n,\lambda) + \F_n(u; Q\setminus Q_{1-\delta} ,\tilde{\eta}_n,\lambda)\\
	&+ \frac{C}{N}  (\F_n(u_n; Q\setminus Q_{1-\delta} ,\tilde{\eta}_n,\lambda)+  \F_n(u; Q\setminus Q_{1-\delta} ,\tilde{\eta}_n,\lambda) )\\
	&+\frac{C}{\delta^2}\left[\e_n^2 \F_n(u_n;S_i,\tilde{\eta}_n,\lambda) +\e_n^2 \F_n(u;S_i,\tilde{\eta}_n,\lambda) + \e_n^2 \sum_{x\in G_n }  (u_n(x)-u(x))^2  \right],
\end{align*}	 
where the constant $C$ is independent of $n,N,\delta$.  We now use Proposition \ref{propo:count} (by observing that $\eta-x_n$ has the same distribution than $\eta$) and consider the limit in $n$ and achieve 
\begin{align*}
\liminf_{n\rightarrow +\infty} \F_n(v_{n};Q,\tilde{\eta}_n,\lambda)\leq& \liminf_{n\rightarrow +\infty} \F_n(u_n;Q,\tilde{\eta}_n,\lambda) + C P(Q) \|\nabla u\|_{\infty}^2 \delta  \\
&+\frac{C}{N}( 1+ P(Q)\|\nabla u\|_{\infty}^2 \delta).
\end{align*}
A further limit in $N$ yields \eqref{eqn:finalBNDfixing}.
\end{proof}

\begin{proposition}[Blow-up]\label{prop:blw}
Let $u_n\rightarrow u$, $u\in W^{1,2}(Q)$ and pick $x_0\in Q$ a Lebesgue point of $\nabla u$. Fix a sequence of regular grids $\{G_{\e_n,t} \in \mathcal{G}_{t}(\Upsilon;\eta_{\e_n})\}_{t\in \R_+}$ such that
	\[
\lim_{t\rightarrow 0} \limsup_{n\rightarrow+\infty} \int_{G_{\e_n,t}\cap Q} |u_n(x)-u(x)|^2\d x= 0.
	\]
Then, for any $\rho>0$ it holds
\[
G^{\rho}_{\e_n,t}(x_0):=\frac{G_{\e_n,t}\cap Q_{\rho}-x_0}{\rho }\in \mathcal{G}_{t/\rho} \left(\Upsilon;  \eta_{\e_n}^{\rho,x_0}  \right),
\]
where
\[
\eta_{\e_n}^{\rho,x_0}:= \frac{\eta_{\e_n} - x_0}{\rho}.
\]
Moreover, for any $\delta>0$ we can choose two sequences $t_n$, $\rho_n\rightarrow 0$ such that
	\begin{equation}\label{eqn:BL1}
	\frac{ t_n}{\rho_n}\rightarrow 0, \ \ \ \frac{\e_n}{\rho_n}\rightarrow 0, \ \ \frac{|x_0|}{\rho_n}\leq g\left(\frac{|x_0|}{\e_n}\right),
	\end{equation}
where $g$ is the function given by Proposition \ref{prop:ex}.
Finally, setting 
	\[
	G_n(x_0):= \frac{G_{\e_n,t_n}\cap Q_{\rho_n}-x_0}{\rho_n}, 
	\]
 we have
	\begin{equation}\label{eqn:BL2}
	\lim_{n\rightarrow +\infty} \int_{\mathcal{V}_{\e_n}(G_{n}(x_0))\cap Q} |\hat{u}^{\rho_n,x_0}_{n}(x)-\nabla u(x_0)\cdot x|^2\d x=0,
	\end{equation}
	where
	 \begin{align*}
 u^{\rho, x_0}_{n}(x):=\frac{u_n(x_0+\rho x)-u(x_0)}{\rho} \ \ \ \ \text{for $x\in \eta^{\rho,x_0}_{\e_n}\cap Q $}
 \end{align*}
 
\end{proposition}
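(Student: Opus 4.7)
My plan is to handle the three assertions in turn.

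\textbf{(i) Scaling invariance of the grid.} Writing $\tilde{\e}_n=\e_n/\rho$, the affine map $\Phi:y\mapsto(y-x_0)/\rho$ sends $\eta_{\e_n}$ to $\eta^{\rho,x_0}_{\e_n}=\tilde{\e}_n(\eta-x_0/\e_n)$, where $\eta-x_0/\e_n$ is equal in distribution to $\eta$, and transports Voronoi cells equivariantly. Hence in-radii, diameters, separations between paths, neighbor distances, and the ball $B_{\lambda\e_n}\mapsto B_{\lambda\tilde{\e}_n}$ in the local density bound all dilate correctly; the horizontal/vertical rectangles of side $t$ in $Q_\rho$ become rectangles of side $t/\rho$ in $Q_1$; the numerical bounds in (d) are unchanged. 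Thus properties (a)--(g) of Definition \ref{def:pathConn} transfer with the same $\Upsilon$, giving $G^\rho_{\e_n,t}(x_0)\in \mathcal{G}_{t/\rho}(\Upsilon;\eta^{\rho,x_0}_{\e_n})$.

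\textbf{(ii) Diagonal selection of $t_n,\rho_n$.} Set $\phi(n,t):=\int_{\mathcal{V}_{\e_n}(G_{\e_n,t})\cap Q}|\hat{u}_n-u|^2\,dy$; by hypothesis (via Proposition \ref{prop:L2ongrid}), $\lim_{t\to 0}\limsup_n\phi(n,t)=0$. A Cantor-type diagonal extraction produces $t_n\downarrow 0$ with $\phi(n,t_n)\to 0$. I then pick $\rho_n\downarrow 0$ slowly enough that
\begin{equation*}
\frac{t_n}{\rho_n}\to 0,\quad \frac{\e_n}{\rho_n}\to 0,\quad \rho_n\geq \frac{|x_0|}{g(|x_0|/\e_n)},\quad \frac{\phi(n,t_n)}{\rho_n^4}\to 0
\end{equation*}
hold simultaneously. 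The third is compatible with $\rho_n\to 0$ since $g(R)\to\infty$ forces $|x_0|/g(|x_0|/\e_n)\to 0$; the fourth is achievable by requiring $\rho_n\gg \phi(n,t_n)^{1/4}$.

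\textbf{(iii) The convergence \eqref{eqn:BL2}.} I would decompose
\begin{equation*}
\bigl|\hat{u}^{\rho_n,x_0}_n(x)-\nabla u(x_0)\cdot x\bigr|^2\leq 2A_n(x)+2B_n(x),
\end{equation*}
with $A_n(x):=\bigl|\hat{u}^{\rho_n,x_0}_n(x)-(u(x_0+\rho_n x)-u(x_0))/\rho_n\bigr|^2$ and $B_n(x):=\bigl|(u(x_0+\rho_n x)-u(x_0))/\rho_n-\nabla u(x_0)\cdot x\bigr|^2$. Using $\hat{u}^{\rho_n,x_0}_n(x)=(\hat{u}_n(x_0+\rho_n x)-u(x_0))/\rho_n$ together with the change of variable $y=x_0+\rho_n x$,
\begin{equation*}
\int_Q A_n(x)\,\ca_{\mathcal{V}_{\e_n}(G_n(x_0))}(x)\,dx=\frac{1}{\rho_n^4}\int_{Q_{\rho_n}(x_0)\cap\mathcal{V}_{\e_n}(G_{\e_n,t_n})}\bigl|\hat{u}_n(y)-u(y)\bigr|^2\,dy\leq \frac{\phi(n,t_n)}{\rho_n^4}\to 0
\end{equation*}
by step (ii), and the same change of variable gives
\begin{equation*}
\int_Q B_n(x)\,dx=\frac{1}{\rho_n^4}\int_{Q_{\rho_n}(x_0)}\bigl|u(y)-u(x_0)-\nabla u(x_0)\cdot(y-x_0)\bigr|^2\,dy \to 0
\end{equation*}
by the Calder\'on--Zygmund $L^2$-differentiability of $W^{1,2}$ functions, valid at a.e. point of $Q$ and in particular at a.e. Lebesgue point of $\nabla u$.

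The main obstacle will be the diagonal construction of $\rho_n$: it must vanish strictly slower than $\phi(n,t_n)^{1/4}$ (to kill the localized grid error in $A_n$), but still vanish (to invoke $L^2$-differentiability of $u$ at $x_0$ in $B_n$), while keeping $\e_n,t_n\ll\rho_n$ so that the rescaled grid $G_n(x_0)$ retains many Voronoi cells per unit length and the constraints from Proposition \ref{prop:ex} for cell problems on translated cubes are met. This compatibility---the crux of the Fonseca--M\"uller blow-up technique---is available precisely because $\phi(n,t_n)\to 0$ and $g\to\infty$ leave ample slack in the speed of $\rho_n$.
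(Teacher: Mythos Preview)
Your proposal is correct and follows essentially the same route as the paper: verification of the grid properties under the affine rescaling for part (i), and a diagonalization in $n,t,\rho$ combined with $L^2$-differentiability of $u$ at $x_0$ for parts (ii)--(iii). The paper's own proof is extremely terse (it literally says ``a diagonalization argument in $n,t,\rho$ and by exploiting that $x_0$ is a Lebesgue point of $\nabla u$''); your explicit $A_n/B_n$ splitting and the quantitative requirement $\phi(n,t_n)/\rho_n^4\to 0$ on the speed of $\rho_n$ make precise the content the paper leaves implicit.
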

 \begin{proof}
We start by proving that $G_{\e_n,t}^{\rho}(x_0)\in\mathcal{G}_{t/\rho} \left(\Upsilon;  \eta_{\e_n}^{\rho,x_0}  \right)$.  Indeed, property (b) and (c) follow immediately by construction.  By scaling we also get property  (e), (f) and (g) from their validity on $G_{\e_n,t}$.  Since $G_{\e_n,t}\in \e_n\eta^{\alpha}(\lambda)$ we also obtain property (a): $G_{\e_n,t}^{\rho}(x_0)\subset \left(\frac{\eta^{\alpha}(\lambda)-x_0}{\e_n}\right)$. Property (d) is immediate since, by replacing $t$ with $t/\rho$ and $\e_n$ with $\e_n/\rho$ the bounds \eqref{eqn:unifBnd} given by $\Upsilon$ are still in force. \smallskip

The second part of the statement comes just from a diagonalization argument in $n, t, \rho$ and by exploiting that $x_0$ is a Lebesgue point of $\nabla u$. 
 \end{proof}

\subsection{Proof of the lower bound} \label{sbsct:liminf}
We follow the blow-up method by Fonseca and M\"uller \cite{FM} (see also \cite{BMS} for its adaptation to homogenization).
Let $u_\e\rightarrow u$ in the sense of Definition \ref{def:conv}.  Without loss of generality we can assume that 
	\[
	\liminf_{\e\rightarrow 0} \F_{\e}(u_{\e}; Q )<+\infty
	\]
Fix $x_0\in Q$ a Lebesgue point of $\nabla u$ and $u$, and a subsequence $\e_n\rightarrow 0$ achieving the $\liminf$. Define
	\[
	\mu_{n}(A):=\F_{\e_n}(u_{\e_n};A).
	\]
Then $\mu_n\wt \mu$ (up to a subsequence) for some measure $\mu$, and $u_n:=u_{\e_n}\rightarrow u$ in the sense of Definition \ref{def:conv}.  Moreover $u\in W^{1,2}(Q)$ due to Theorem \ref{thm:Compactness}. We would like to show that
	\[
	\frac{\d \mu}{\d \L^2}(x)\geq\Xi |\nabla u(x)|^2
	\]
for $\L^2$-almost every $x\in Q$.  This would imply that
	\[
	\liminf_{\e\rightarrow 0} \F_{\e}(u_{\e};Q)=\lim_{n\rightarrow +\infty} \F_{\e_n}(u_n;Q)\geq \Xi \int_Q |\nabla u(x)|^2\d x.
	\]
	
Note that
	\begin{align*}
	\frac{\d \mu}{\d \L^2}(x_0)=\lim_{\rho\rightarrow 0} \frac{\mu(Q_{\rho}(x_0))}{\rho^2}=\lim_{\rho\rightarrow 0}\lim_{n\rightarrow +\infty} \frac{\mu_n(Q_{\rho}(x_0))}{\rho^2}.
	\end{align*}
Since $u_n\rightarrow u$ then, for any $t\in \R_+$ and any grid $G_{\e_n,t}$ there exists a $u^t\in X_t$ such that $T^{G_{\e_n,t}}(u_n)\rightarrow u^t$. With fixed $\delta$,  by invoking Proposition \ref{prop:blw} we can find two subsequences $\rho_n,t_n$  such that \eqref{eqn:BL1} and \eqref{eqn:BL2} holds. 
By relabeling $\tilde{\e}_n:=\e_n/\rho_n$,  $\tilde{t}_n=t_n/\rho_n$, $x_n:=x_0/\e_n$ and by invoking Proposition \ref{prop:bndDATA} we can find $\{v_n\in L^2(Q,\tilde{\e}_n(\eta-x_n))\}_{n\in \N}$ such that
	\[
	v_n(x)=\nabla u(x_0)\cdot x \ \ \ \ \text{on $Q_1\setminus Q_{1-\delta}$}
	\]
and 
\[
\liminf_{n\rightarrow +\infty} \F_{\tilde{\e}_n} (v_n;Q, \tilde{\e}_n(\eta-x_n),\lambda)\leq\liminf_{n\rightarrow +\infty} \F_{\tilde{\e}_n} (u_n^{\rho_n,x_0} ;Q, \tilde{\e}_n(\eta-x_n),\lambda) + C|\nabla u(x_0)|^2 \delta.
\]
Observe now that
	\[
	\frac{\d \mu}{\d \L^2}(x)=\lim_{n\rightarrow +\infty} \frac{\F_n(u_n;Q_{\rho_n}(x_0);\e_n \eta,\lambda)}{\rho_n^2}.
	\]
Moreover the following holds
	\begin{align*}
	\F_n(u_n;Q_{\rho_n}(x_0);\e_n \eta,\lambda)&= \sum_{x\in Q_{\rho_n}(x_0) \cap \e_n\eta} \sum_{y\in B_{\lambda\e_n}(x) \cap \e_n\eta} |u_n(x)-u_n(y)|^2\\
	&= \sum_{x\in Q_{\rho_n}  \cap (\e_n\eta-x_0)} \sum_{y \in B_{\lambda\e_n}(x) \cap (\e_n\eta-x_0)} |u_n(x_0+x)-u_n(x_0+y)|^2\\
&= \sum_{x \in Q \cap \frac{\e_n}{\rho_n}(\eta-x_n)} \sum_{y \in B_{\lambda\frac{\e_n}{\rho_n} }(x) \cap \frac{\e_n}{\rho_n}(\eta-x_n)} |u_n(x_0+\rho_n x)-u_n(x_0+\rho_n y)|^2\\
&=\rho_n^2 \sum_{x \in Q \cap \tilde{\e}_n (\eta-x_n)} \sum_{y \in B_{\lambda \tilde{\e}_n }(x) \cap \tilde{\e}_n (\eta-x_n)} \left|u_n^{\rho_n,x_0} (x)-u_n^{\rho_n,x_0} (y)\right|^2\\
&=\rho_n^2 \F_{\tilde{\e}_n} ( u_n^{\rho_n,x_0} ; Q, \tilde{\e}_n(\eta-x_n),\lambda).
	\end{align*}
In particular, we have  
	\begin{align*}
	\frac{\d \mu}{\d \L^2} (x) =&\lim_{n\rightarrow +\infty} \frac{\F_n(u_n;Q_{\rho_n}(x_0);\e_n \eta,\lambda)}{\rho_n^2}\\
	&\geq  -C\delta|\xi|^2+ \liminf_{n\rightarrow +\infty}   \F_{\tilde{\e}_n} (v_n;Q,\tilde{\e}_n(\eta-x_n),\lambda),
	\end{align*}
and finally
\begin{align*}
\F_{\tilde{\e}_n} (v_n;Q,\tilde{\e}_n(\eta-x_n),\lambda)&=\sum_{x \in Q \cap \tilde{\e}_n (\eta-x_n)} \sum_{y \in B_{\lambda \tilde{\e}_n }(x) \cap \tilde{\e}_n (\eta-x_n)} \left|v_n(x)-v_n (y)\right|^2\\
&=\tilde{\e}_n^2 \sum_{ x \in Q_{1/\tilde{\e_n}}(x_n) \cap \eta } \sum_{y \in B_{\lambda  }(x) \cap \eta } \left|\frac{v_n\left(\tilde{\e}_n (x-x_n)\right)}{\tilde{\e}_n}-\frac{v_n (\tilde{\e}_n (y-x_n))}{\tilde{\e}_n}\right|^2.
\end{align*}
Since 
$v_n(x)=\nabla u(x_0)\cdot x $ for $x\in (Q\setminus Q_{1-\delta})\cap \tilde{\e}_n(\eta-x_n)$, 
we get \[
\frac{v_n\left( \tilde{\e}_n(x-x_n) \right)}{\tilde{\e}_n}+\nabla u(x_0)\cdot x_n =\nabla u(x_0)\cdot \left(\frac{y}{\tilde{\e}_n}+  x_n\right)=\nabla u(x_0)\cdot x
\]
for $x\in \left(Q_{1/\tilde{\e}_n }(x_n) \setminus Q_{1/\tilde{\e}_n(1-\delta)}(x_n) \right) \cap \eta$.
In particular, we have
	\[
	\F_{\tilde{\e}_n}(v_n;Q,\tilde{\e}_n(\eta-x_n),\lambda) \geq \tilde{\e}_n^2 \mathrm{m}(\nabla u(x_0)\cdot x;Q_{1/\tilde{\e}_n}(x_n) )  \geq\tilde{\e}_n^2 \mathrm{m}(\nabla u(x_0)\cdot x;Q_{1/\tilde{\e}_n}(x_n) ) 
	\]
Considering $T_n=\frac{1}{\tilde{\e}_n}$ and observing that
	\[
	|x_n|=\frac{|x_0|}{\rho_n \tilde{e}_n}\leq T_n \frac{|x_0|}{\rho_n}\leq T_n g_{\delta}(|x_n|),
	\]
we conclude  that
 \[
 \lim_{n\rightarrow +\infty} \tilde{\e}_n^2 \mathrm{m}(\nabla u(x_0)\cdot x;Q_{1/\tilde{\e}_n}(x_n) ) = \lim_{n\rightarrow +\infty} \frac{ \mathrm{m}(\nabla u(x_0)\cdot x;Q_{T_n}(x_n) )}{T_n}= f(\nabla u(x_0))=\Xi |\nabla u(x_0)|^2
 \]
by Proposition \ref{prop:ex}.
Hence,
 	\begin{align*}
 	\frac{\d \mu}{\d \L^2}(x_0)\geq -C \delta |\nabla u(x_0)|^2 +\Xi |\nabla u(x_0)|^2.
 	\end{align*}
 By considering the limit as $\delta\rightarrow 0 $ we get
 	\[
 	\frac{\d \mu}{\d \L^2}(x_0)\geq \Xi |\nabla u(x_0)|^2.
 	\]
	as desired.
 \qed
\subsection{Proof of the upper bound} \label{sbsct:limsup}

We prove the statement in several steps, in order to clarify the diagonalization process that we use.  The strategy will be to approximate a generic function $u\in W^{1,2}(Q)$ with a sequence of functions $\{v_k\}_{k\in \N}$ which are  piecewise affine on simplexes and then show how to recover the energy of each $v_k$.  Then we will exploit a diagonalization procedure (by means of Lemma \ref{lem:diagonal}).  The recovery sequence for piecewise-affine maps is constructed in Step two below. The major technical point consists in handling the interaction at the common boundary between two simplexes $S,S'$.   To deal with this issue we build, for a generic simplex $S$ and for an affine function $v$ on $S$, an almost recovery sequence, which agrees with $v$ in an internal neighborhood of $\partial S$. This is done in the Step one below.

\smallskip
\textbf{Step one:} \textit{We prove that for any triangle $S\subseteq Q$, for $u(x)=\xi \cdot x$ on $S\subseteq Q$ and for any fixed $\delta>0$ there exists a sequence $\{u_{\e_n,\delta}\in L^2(S;\eta_n)\}_{n\in \N}$ such that 
	\begin{align}
	u_{n,\delta} \rightarrow \xi\cdot x & \ \ \text{in the sense of Definition \ref{def:conv}}\label{eqn:convonSimplex}\\
	u_{n,\delta}=&\ \xi\cdot x \ \ \ \text{on $(\partial S)_{\delta}\cap S$}.\label{eqn:localityonSimplex}\\
	\lim_{n\rightarrow +\infty} \F_{\e_n}(u_{\e_{n,\delta}};S\setminus (\partial S)_{\lambda\e_n} )\leq &\ \Xi |S| |\xi|^2+C\delta \label{eqn:energyonSimplex}
	\end{align}
with $C$ depending on $S$ and $|\xi|$ only.  }
\begin{figure}
\centering
\includegraphics[scale=0.5]{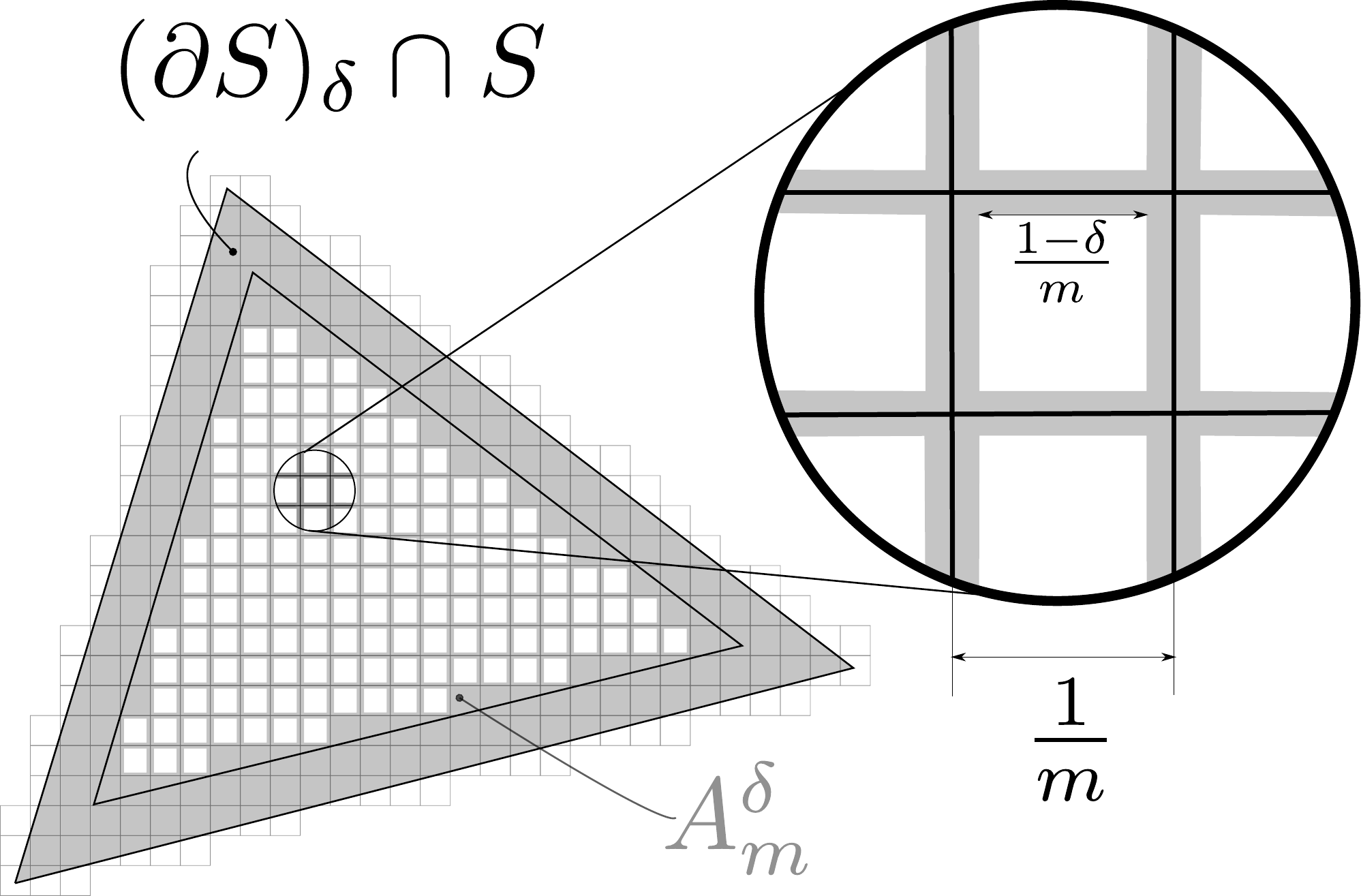}
\caption{construction of recovery sequences on a triangular domain.} \label{LimsupS.pdf}
\end{figure}

The construction is illustrated in Fig.~\ref{LimsupS.pdf}: we fix $\delta>0$, $m\in \N$ and we pick a grid of squares of size $1/m$.   For a square $Q_{1/m}(J)$ that intersect in a non trivial way $(\partial S)_{\delta}\cap S$ we define $u_{\e,m,\delta}=\xi\cdot x$ on $Q_{1/m}(J)\cap S$. If instead $Q_{1/m}(J)$ is well contained in $S$ we consider $u_{\e,m,\delta}$ to agree with the quasi minimum $v_{\e}$ of the cell problem $\mathrm{m}$ on a the  subsquare $Q_{(1-\delta)/m}(J)$ and $u_{\e,m,\delta}=\xi \cdot x$ on $Q_{1/m}(J)\setminus Q_{(1-\delta)/m}(J)$.  In this way $u_{\e,m,\delta}=\xi \cdot x$ on $A_{m}^{\delta}$ and this construction ensures convergence and the $\limsup$ upper bound for the function $u(x)=\xi\cdot x$ on $S$.

We now formalize this argument. Fix $\delta>0$,  $m\in \N$ and introduce the sub-class of indexes 
	\begin{align*}
	\mathcal{I}_{1,m}(S)&:=\{J\in (\sfrac{1}{m}\Z^2)\cap Q \ | \ Q_{\sfrac{1}{m}}(J)\subset S, \  Q_{\sfrac{1}{m} }(J)\cap (\partial S)_{\delta}=\emptyset \} \\
		\mathcal{I}_{2,m}(S)&:=\{J\in (\sfrac{1}{m}\Z^2)\cap Q \ | \ Q_{\sfrac{1}{m}}(J)\cap (\partial S)_{\delta} \cap S\neq \emptyset\}.
	\end{align*} 
For $J\in \mathcal{I}_{1,m}(S)$ consider $u_{\e}^J\in L^2( \sfrac{1}{\e}Q_{\sfrac{(1-\delta)}{m}}(J); \eta)$ such that
	\begin{align*}
	u_{\e,m,\delta}^J(x) &=\xi\cdot x \  \  \ \text{for all $x\in \eta\cap \sfrac{1}{\e} Q_{\sfrac{(1-\delta)}{m}}(J) $ : $\mathrm{dist}(x,\partial  (\sfrac{1}{\e}  Q_{\sfrac{(1-\delta)}{m}}(J) ) \leq 2\lambda $}  
\end{align*}
and
\begin{align*}
	\sum_{x\in \eta \cap \sfrac{1}{\e} Q_{\sfrac{(1-\delta)}{m}}(J)} \sum_{y\in \eta\cap B_{\lambda}(x)} |u_{\e,m,\delta}^J(x)-u_{\e,m,\delta}^J(y)|^2&\leq \mathrm{m}(\xi;\sfrac{1}{\e}  Q_{\sfrac{(1-\delta)}{m}}(J))+1.
	\end{align*}
For $J\in \mathcal{I}_{2,m}(S)$ consider just 
	\[
	u_{\e,m,\delta}^J(x) :=\xi\cdot x  \ \ \ \text{for all $x\in \eta \cap (\sfrac{1}{\e}Q_{\sfrac{1}{m}}(J) \cap \sfrac{1}{\e}S)$}.
	\]
	In particular, setting 
\begin{equation}
		v_{\e,m,\delta}^J(x):=\left\{ \begin{array}{ll }
		\e u_{\e,m,\delta}^J(x/\e)  & \text{for $x\in \eta_{\e} \cap Q_{\sfrac{(1-\delta)}{m}}(J) \cap S$;}\\
		\xi \cdot x &  \text{for $x\in \eta_{\e} \cap [ Q_{\sfrac{1}{m}}(J)\setminus Q_{\sfrac{(1-\delta)}{m}}(J) ]  \cap S$;}
				\end{array}
				\right.
\end{equation}
	we see that, for $J\in \mathcal{I}_{1,m}(S)$ we have
		\begin{align*}
		v_{\e,m,\delta}^J(x)=&\xi\cdot x \ \ \ \text{for all $x\in \eta_{\e}\cap Q_{\sfrac{1}{m}}(J) $ : $\mathrm{dist}(x,\partial Q_{\sfrac{1}{m}}(J))\leq 2\lambda\e_n+\frac{\delta}{m}$}
		\end{align*}
and, by applying Corollary \ref{cor:est},
\begin{align*}
		\sum_{z \in \eta_{\e} \cap Q_{\sfrac{1}{m}}(J)} \sum_{y \in \eta_{\e} \cap  B_{\lambda \e }(z)} |v_{\e,m,\delta}^J(z)-v_{\e,m,\delta}^J\left(y \right)|^2&\leq  \e^2\mathrm{m} (\xi;\sfrac{1}{\e} Q_{\sfrac{(1-\delta)}{m}}(J))+\e^2 + C |\xi|^2 \frac{\delta}{m^2},
		\end{align*}
	while $v_{\e,m,\delta}^J(x)=\xi \cdot x $ for all $x\in \eta_{\e}\cap Q_{\sfrac{1}{m}}(J) \cap S$  if $J\in \mathcal{I}_{2,m}(S)$.
Hence, we define
	\[
	v_{\e,m,\delta}(x):=\sum_{J\in \mathcal{I}_{1,m}(S)\cup  \mathcal{I}_{2,m}(S)} \ca_{Q_{\sfrac{1}{m}}(J) \cap S} (x) v_{\e,m,\delta}^J(x).
	\]
In this way,  by applying again Corollary \ref{cor:est} 
	\begin{align*}
	\F_{\e}(v_{\e,m,\delta};S\setminus (\partial S)_{\lambda\e_n} )\leq & \sum_{J\in \mathcal{I}_{1,m}(S)} \F_{\e}(v_{\e,m,\delta};Q_{\sfrac{1}{m}}(J))+\sum_{J\in \mathcal{I}_{2,m}(S)} \F_{\e}(v_{\e,m,\delta};Q_{\sfrac{1}{m}}(J)\setminus (\partial S)_{\lambda\e_n} )\\
	\leq & C \sum_{J\in \mathcal{I}_{2,m}(S)} |\xi|^2 |Q_{\sfrac{1}{m}}(J)\cap S| +  \sum_{J\in \mathcal{I}_{1,m}(S)} \F_{\e}(v_{\e,m,\delta};Q_{\sfrac{1}{m}}(J)) 	\\
		\leq &  C P(S)  |\xi|^2\delta    +  \sum_{J\in \mathcal{I}_{1,m}(S)} \F_{\e}(v_{\e,m,\delta};Q_{\sfrac{1}{m}}(J))\\
			\leq & C P(S)  |\xi|^2\delta +  \sum_{J\in \mathcal{I}_{1,m}(S)}  \left(\e^2\mathrm{m} (\xi;\sfrac{1}{\e} Q_{\sfrac{(1-\delta)}{m}}(J))+\e^2 + C |\xi|^2 \frac{\delta}{m^2}\right)
	\end{align*}
The existence of the limit of $\mathrm{m}(\cdot ;Q_T)$ given by Lemma \ref{lem:lim} yields the existence of $\e_0$ such that,  for all $\e<\e_0$ we have also
	\[
	\e^2\mathrm{m} (\xi;\sfrac{1}{\e} Q_{\sfrac{(1-\delta)}{m}}(J)  \leq \	\Xi |\xi|^2 |Q_{\sfrac{(1-\delta)}{m}}(J)| + \frac{1}{m^3} \ \ \ \ \text{for all $J\in \mathcal{I}_{1,m}(S)$}.
	\]
Therefore
	\begin{align}
	\F_{\e}(v_{\e,m,\delta};S\setminus (\partial S)_{\lambda\e_n} )&\ \leq    C P(S)  |\xi|^2\delta   +  \sum_{J\in \mathcal{I}_{1,m}(S)}  \left(\e^2\mathrm{m} (\xi;\sfrac{1}{\e} Q_{\sfrac{(1-\delta)}{m}}(J))+\e^2 + C |\xi|^2 \frac{\delta}{m^2}\right) \nonumber\\
	&\ \leq   \Xi |\xi|^2  \sum_{J\in \mathcal{I}_{1,m}(S)}  |Q_{\sfrac{(1-\delta)}{m}}(J)|   +  \left(\frac{1}{m^3} +\e^2\right) \#(\mathcal{I}_{1,m}(S) ) + C|\xi|^2 \delta  \nonumber\\
	&\ \leq \Xi \int_S |\xi|^2 + \left(\frac{1}{m} +\e^2m^2\right)|S| +  C|\xi|^2 \delta.\label{eqn:finalEnergy}
	\end{align}
Moreover
	\[
	\sup_{\sfrac{1}{m}>\e>0}\{\F_{\e}(v_{\e,m,\delta};S)\}<+\infty.
	\]
Thanks to the compactness Theorem \ref{thm:Compactness} we can thus conclude that $v_{\e_n,m,\delta}\rightarrow u_{m,\delta}$ in the sense of Definition \ref{def:conv}.  
Observe also that $v_{\e,m,\delta}(x)=\xi\cdot x $ for all $x\in A^{\delta}_m$,
where
\[
A^{\delta}_{m}:=  \bigcup_{J\in \mathcal{I}_{1,m}(Q)} \left\{ x \in S \ : \mathrm{dist}(x,\partial Q_{\sfrac{1}{m}}(J) ) \leq\sfrac{\delta}{m} \right\}  \cup \Bigl(\bigcup_{J\in \mathcal{I}_{2,m}(S)} Q_{\sfrac{1}{m}}(J)\cap S\Bigr).
\] 
This in particular implies that $u_{m,\delta}=\xi\cdot x$ on $A_m^{\delta}$.  Since $u_{m,\delta}\in W^{1,2}(S)$ and
	\[
	\Xi \int_{S}|\nabla u_{m,\delta}|^2 \d x\leq \liminf_{n\rightarrow +\infty} \F_{\e_n}(v_{\e_n,m,\delta};S)<\Xi|S|\xi|^2+ C\left( \frac{1}{m}+ \delta\right)
	\]
we also have that, up to sub-sequences $u_{{m,\delta}}\rightarrow \xi \cdot x$ in $L^2(S)$.  Hence,  by applying Lemma \ref{lem:diagonal} we can find $m_{\e}\rightarrow +\infty$, with  $\e\, m_{\e}\rightarrow 0$ and for which $u_{\e,\delta}:=v_{\e,m_{\e},\delta}\rightarrow \xi\cdot x$ and also \eqref{eqn:energyonSimplex} and \eqref{eqn:localityonSimplex} hold.
\smallskip

\textbf{Step two:} \textit{we prove the existence of a recovery sequence for the piecewise-affine function
	\[
	v=\sum_{S\in \mathcal{S} } v_S(x)
	\]
with $\mathcal{S}$ a finite family of essentially disjoint triangles partitioning $Q$ and $v_S(x)=v_{S'}(x)$ for $x\in \partial S\cap \partial S'$.} Note that, being each $v_S$ affine it can be represented as $v_S(x)=\xi_S\cdot x +b_S$ on $S$.  Fix $\delta>0$ and for any $S\in \mathcal{S}$ let $\{u_{n,\delta}^S\in L^2(S;\eta_n) \}_{n\in \N}$ be the functions constructed in Step one and satisfying \eqref{eqn:energyonSimplex},\eqref{eqn:localityonSimplex}, and \eqref{eqn:convonSimplex} relatively to $\xi_S$.  Set
	\[
	v_{\e,\delta}(x):= \sum_{S\in \mathcal{S}} \ca_{S}(x) (u_{n,\delta}^S(x)+b_S).
	\] 
We have that $v_{\e,\delta}\rightarrow v$ in the sense of Definition \ref{def:conv}.  Moreover,
\begin{align*}
\F_{\e}(v_{\e,\delta};Q)&\leq \sum_{S\in \mathcal{S}} \F_{\e}(v_{\e,\delta};S\setminus (S)_{\lambda\e_n} )+  \sum_{S\in \mathcal{S}} \F_{\e}(v_{\e,\delta}; (S)_{\lambda\e_n} ).
\end{align*}
Then we just observe that 
\begin{align*}
\sum_{S\in \mathcal{S}} \F_{\e}(v_{\e,\delta}; (S)_{\lambda\e_n} )=\sum_{S\in \mathcal{S}}  \sum_{x\in \eta_n \cap (S)_{\lambda\e_n}  }  \sum_{y\in \eta_n\cap B_{\lambda\e_n}(x)} |v_{\e,\delta}(x)-v_{\e,\delta}(y)|^2. 
\end{align*}
Recall that,  by construction we have that, for $x\in S$,  $v_{\e,\delta}(x)=v(x)$ on $ (S)_{\lambda\e_n}$ and, since it is continuous and piecewise affine, it is in particular a Lipschitz map. Then we have

\begin{align*}
\sum_{S\in \mathcal{S}}  \F_{\e}(v_{\e,\delta}; (S)_{\lambda\e_n} )&=\sum_{S\in \mathcal{S}}  \sum_{x\in \eta_n \cap (S)_{\lambda\e_n}  }  \sum_{y\in \eta_n\cap B_{\lambda\e_n}(x)} |v_{\e,\delta}(x)-v_{\e,\delta}(y)|^2\\
&\leq C\sum_{S\in \mathcal{S}}  \sum_{x\in \eta_n \cap (S)_{\lambda\e_n}  }  \sum_{y\in \eta_n\cap B_{\lambda\e_n}(x)} |x-y|^2\\
&\leq C\sum_{S\in \mathcal{S}}  \sum_{x\in \eta_n \cap (S)_{\delta}  } \eta_n ( B_{\lambda\e_n}(x)) \e_n^2\\
&\leq C\sum_{S\in \mathcal{S}}  P(S) \delta ,
\end{align*}
where in the last inequality we have used Proposition \ref{propo:count}. Thus
	\[
	\limsup_{\e\rightarrow 0} \F_{\e} (v_{\e,\delta}(x);Q)\leq \sum_{S\in \mathcal{S}}\Xi|S| |\xi|^2 + C\delta= \Xi \int_Q |\nabla v|^2 \d x+C\delta.
	\]
By now diagonalizing along $\delta$, with the aid of Lemma \ref{lem:diagonal},  we find $u_{\e}\rightarrow v$ such that 
\[
\limsup_{\e\rightarrow 0} \F_{\e} (v_{\e,\delta}(x);Q)\leq \Xi \int_Q |\nabla v|^2 \d x.
\]

\textbf{Step three:} \textit{we prove the existence of a recovery sequence for a generic $u\in W^{1,2}(Q)$}.  We just observe that for any $u\in W^{1,2}(Q)$ we can find a sequence of piecewise-affine functions $\{v_k\}_{k\in \N}$  with the structure as in Step two such that $v_k\rightarrow u$ in $L^2$ and 
\[
\int_{Q}|\nabla v_k|^2\d x\rightarrow \int_{Q} |\nabla u|^2\d x.
\]
The construction developed in Step two, for each $k$, and a further application of the diagonalizing procedure (Lemma \ref{lem:diagonal}) conclude the existence of the desired sequence.
\qed

\section{Appendix}\label{sct:app}

\subsection{Existence of regular grids}
We here focus on proving Theorem \ref{thm:pathConn}.  Let 
	\begin{align*}
	R^v_{T,\delta}(x_0)&:=\left[x_0 -\frac{T\delta}{2},x_0 +\frac{T\delta}{2}\right]\times\left [x_0 -\frac{T}{2}, x_0 +\frac{T}{2}\right]\\
	R^h_{T,\delta}(x_0)&:=\left [x_0 -\frac{T}{2}, x_0 +\frac{T}{2}\right]\times \left[x_0 -\frac{T\delta}{2},x_0 +\frac{T\delta}{2}\right]
	\end{align*}
\begin{definition}
Let $\{X_j\}_{j\in \Z^2}$ be a sequence of i.i.d random variable such that
	\begin{equation}
	X_j=\left\{ \begin{array}{lr}
	1 \ & \ \text{with probability $p$}\\
	0\ & \ \text{with probability $1-p$}\\
	\end{array}\right.
	\end{equation}
We say that $\{j_i\}_{i=1}^M$ is an \textit{open path} for the realization $\omega$ if $X_{j_i}(\omega)=1$ for all $i=1,\ldots, M$ and $j_{i}$, $j_{i+1}$ are neighboring squares. 
\end{definition}
We recall the following percolation property from \cite{kesten1982percolation}.
\begin{theorem}[Property of Bernoulli site percolation]\label{thm:perc}
There exists a probability $p_{\mathrm{cr}}$ such that the following holds.  Let $C$ be a compact set, for any $\delta>0$ there exists $K_{\delta}$ such that for almost all $\omega\in \Omega$ we can find $T_0(\omega)>0$  for which any rectangles $R^{v}_{T,\delta}(x_0), R^{h}_{T,\delta}(x_0)$ with $T>T_0$ and $x_0\in T C$ contains at least $K_{\delta}T$ disjoint paths that connects the two opposite sides of $R^v_{T,\delta}(x_0), R^h_{T,\delta}(x_0)$ respectively in the horizontal and in the vertical direction.
\end{theorem}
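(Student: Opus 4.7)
\smallskip

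The statement is the classical many-disjoint-crossings result for supercritical Bernoulli site percolation on $\Z^2$, and I would prove it along the following lines. The underlying input is that $p$ is assumed supercritical, i.e.\ $p>p_{\mathrm{cr}}$, where $p_{\mathrm{cr}}$ is the critical threshold for Bernoulli site percolation on $\Z^2$; in that regime one has the RSW-type crossing estimate that there is $\pi=\pi(p,\delta)>0$ with
\[
\mathbb{P}\bigl(R^h_{T,\delta}(x_0) \text{ is crossed in the short (vertical) direction}\bigr)\ge \pi
\]
for every $T\ge 1$ and every $x_0$, together with the sharper exponential-decay bound of Grimmett/Aizenman-Kesten-Newman stating that the probability of no crossing decays like $\exp(-c T)$ for some $c=c(p,\delta)>0$. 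This is the deep input I would import as a black box.

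From that input, getting $K_\delta T$ \emph{disjoint} crossings is a slicing argument. Consider $R^h_{T,\delta}(x_0)$, of dimensions $T\times T\delta$, and partition it into $N=\lfloor T/h \rfloor$ disjoint sub-rectangles of dimensions $h\times T\delta$, with $h=h(\delta)$ chosen so that each sub-rectangle has aspect ratio $h/(T\delta)$ falling in a fixed window; a short-direction vertical crossing of any one sub-rectangle is a short-direction vertical crossing of $R^h_{T,\delta}(x_0)$, and crossings in disjoint sub-rectangles are disjoint. Because Bernoulli site percolation is a product measure and the sub-rectangles are disjoint, the $N$ crossing events are \emph{independent} Bernoulli variables with success probability at least $\pi$. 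By Hoeffding's inequality,
\[
\mathbb{P}\bigl(\text{fewer than } \tfrac{\pi}{2} N \text{ sub-rectangles are crossed}\bigr) \le \exp(-c' T),
\]
for some $c'=c'(p,\delta)>0$. Setting $K_\delta:=\pi/(2h)$ gives at least $K_\delta T$ disjoint vertical crossings with probability at least $1-\exp(-c'T)$, uniformly in $x_0$. The analogous statement for $R^v_{T,\delta}(x_0)$ follows by symmetry (rotating the lattice by $\pi/2$).

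To upgrade this to the almost-sure uniform statement over $x_0\in TC$, I would use a standard Borel--Cantelli / union-bound argument. Since $C$ is compact, for each $T$ the set $TC\cap\Z^2$ has cardinality at most $C_0 T^2$. A union bound over these integer points, followed by a continuity argument (a crossing of a rectangle with slightly shifted center yields a crossing of the nearby rectangle, so discretizing $x_0$ at spacing $1$ costs only a constant blow-up in $K_\delta$), gives
\[
\mathbb{P}\bigl(\exists\, x_0 \in TC \text{ such that the crossing bound fails for } R^h_{T,\delta}(x_0) \text{ or } R^v_{T,\delta}(x_0)\bigr) \le C_1 T^2 e^{-c'T},
\]
which is summable in $T\in \N$. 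Borel--Cantelli then yields a (random) $T_0(\omega)<\infty$ for almost every $\omega$ such that for every $T\ge T_0$ and every $x_0\in TC$ the claimed $K_\delta T$ disjoint crossings exist in both $R^h_{T,\delta}(x_0)$ and $R^v_{T,\delta}(x_0)$.

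The main obstacle is the first stage: importing the RSW-type lower bound $\pi>0$ and, crucially, the exponential-in-$T$ decay of non-crossing probabilities, since this is what makes the union bound over $x_0\in TC$ work. Once those classical percolation facts are accepted (the paper cites Kesten's book for precisely this), the remainder is a routine slicing plus Borel--Cantelli. I would also remark that the independence across disjoint sub-rectangles is the reason site percolation is preferable here to bond percolation: the latter would force a more delicate argument via the FKG inequality, but for site percolation the independence across disjoint sets is immediate from the product structure of $\{X_j\}_{j\in\Z^2}$.
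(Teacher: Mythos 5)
The paper itself does not prove Theorem \ref{thm:perc}: it cites it directly as a known fact from Kesten's book \cite{kesten1982percolation} (``We recall the following percolation property from...''). So there is no in-paper proof to compare against, and simply invoking the reference would be an acceptable way to handle this statement. That said, since you chose to sketch a proof, I'll address it on its merits.

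The high-level architecture — exponentially fast convergence to $1$ of the crossing probability, then a union bound over the polynomially-many centers $x_0\in TC\cap\Z^2$ plus a discretization step, then Borel--Cantelli to extract a random $T_0(\omega)$ — is sound and is exactly the kind of argument one would use to upgrade a crossing estimate to the almost-sure uniform statement the theorem requires.

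The slicing step, however, contains a genuine gap. You partition $R^h_{T,\delta}(x_0)$, of size $T\times T\delta$, into $N=\lfloor T/h\rfloor$ columns of size $h\times T\delta$, and you simultaneously want \emph{(i)} $N$ to be of order $T$ (so that the final count is $K_\delta T$) and \emph{(ii)} each column to have aspect ratio $h/(T\delta)$ bounded away from $0$ (so the crossing probability $\pi$ is uniformly positive). These two requirements are incompatible. If $h=h(\delta)$ is a constant, as you state, then $N\sim T$ but $h/(T\delta)\to 0$, and the column becomes a thin strip of fixed width and diverging height; a top-to-bottom open crossing of such a strip is a \emph{long-direction} crossing of a strip of bounded width, and its probability tends to $0$ exponentially fast as $T\to\infty$ for any $p<1$ — a fixed-width strip never percolates. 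The phrase ``short-direction vertical crossing of any one sub-rectangle'' is where the slip hides: for $h\ll T\delta$ the short direction of the column is horizontal, and a horizontal crossing of the column is useless, while a vertical crossing is a long one. Conversely, if you take $h\sim cT\delta$ so the aspect ratio stays bounded, then $N\sim 1/(c\delta)$ is a constant and the argument yields only $O_\delta(1)$, not $\Omega(T)$, disjoint crossings; and then $K_\delta := \pi/(2h)$ would depend on $T$, which is not allowed. Either way the construction does not produce linearly many disjoint crossings.

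Getting $\Omega(T)$ disjoint short-direction crossings of a $T\times T\delta$ box in supercritical percolation is a genuinely deeper fact than the single-crossing estimate, and a na\"ive slicing plus Hoeffding will not produce it. The standard routes are (a) Menger's theorem combined with planar duality and a Peierls-type large-deviation bound on blocking closed paths in the matching lattice (using that closed sites are subcritical on the matching lattice when $p>p_{\mathrm{cr}}$), or (b) a coarse-graining/renormalization argument. Either requires more work than the sketch gives. If you want to keep the proposal short, the cleanest fix is to do what the paper does and import the linear-in-$T$ disjoint-crossing estimate directly from \cite{kesten1982percolation}, then carry out your union-bound and Borel--Cantelli steps, which are correct as written.
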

For the sake of brevity we introduce the following notation confined to this section. For $R^h_{T,\delta}(x_0),R^v_{T,\delta}(x_0)$ we denote by $\mathbf{h}^T_1(x_0),\ldots ,\mathbf{h}^T_M(x_0)$ and by $\mathbf{v}^T_1(x_0),\ldots ,\mathbf{v}^T_M(x_0)$, the families of horizontal (and respectively vertical) disjoint paths connecting the two opposite sides of $R^h_{T,\delta}(x_0)$ (and $R^v_{T,\delta}(x_0)$ respectively).
\begin{proposition}\label{prop:Ex}
Let $C\subset \R^2$ be a compact set.  There exists $K_{\delta}, \Upsilon_{\delta},\alpha_0,\lambda_0$ such that, provided 
\[
\alpha<\alpha_0 \ ,\lambda>\max\Bigl\{\frac{2}{\alpha}, \ \lambda_0\Bigr\},
\] for almost all $\omega\in \Omega$ we can find $T_0(\omega,C)>0$  for which any rectangles $R^{v}_{T,\delta}(x_0), R^{h}_{T,\delta}(x_0)$ with $T>T_0$ and $x_0\in T C $ contains at least $K_{\delta}T$ disjoint paths $\mathbf{h}^T_1(x_0),\ldots ,\mathbf{h}^T_{K_{\delta}T}(x_0)$, satisfying the following properties.
\begin{itemize}
\item[a.1)] $\mathbf{h}^T_i(x_0), \mathbf{v}^T_j(x_0)$ are paths in $\eta^{\alpha}(\lambda)\cap Q_T$ for all $i,j$;
	\item[b.1)] for any $m=1,\ldots, K_{\delta} T$, $\mathbf{h}^T_{m}$ connects the two opposite sides of $R^h_{T,\delta}(x_0)$ and is strictly contained in $R^h_{T,\delta}(x_0)$; 
		\item[c.1)] for any $m=1,\ldots, K_{\delta}T$, $\mathbf{v}^T_{m}$ connects the two opposite sides of $R^v_{T,\delta}(x_0)$ and is strictly contained in $R^v_{T,\delta}(x_0)$; 
	\item[d.1)] the following bounds hold for any $m \in\{1,\ldots,K_{\delta}T\}$;
	\begin{equation}\label{eqn:unifBndAPP}
	\begin{array}{c}
	\displaystyle\frac{T}{\Upsilon_{\delta}}\leq \ell(\mathbf{h}^T_m\cap R^h_{T,\delta}(x_0))\leq \Upsilon_{\delta} T \ \ \ 
	\displaystyle	\frac{T}{\Upsilon_{\delta}}\leq \ell(\mathbf{v}^T_m\cap R^v_{T,\delta}(x_0))\leq\Upsilon_{\delta}T
	\end{array}
	\end{equation}	
	\item[e.1)] $\mathrm{dist}(\mathbf{h}^T_i(x_0), \mathbf{h}^T_j(x_0))\geq 3\lambda$, $\mathrm{dist}(\mathbf{v}^T_i(x_0), \mathbf{v}^T_j(x_0))\geq 3\lambda$ for all $i,j= 1,\ldots,K_{\delta}T$,  $i\neq j$;
	\item[f.1)]  If $x \in (\mathbf{h}_j^T(x_0))_{3\lambda}\cap \eta$ then $\eta(B_{\lambda}(x))\leq \alpha^{-1} \lambda^2$;
	\item[g.1)] If $x,y\in \mathbf{h}_i^T(x_0)$, ($x,y\in \mathbf{h}_j^T(x_0)$) satisfies $\langle x,y\rangle$ then $|x-y|\leq \lambda$;
%
\end{itemize}
\end{proposition}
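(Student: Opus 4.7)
The plan is to reduce the statement to the Bernoulli site percolation of Theorem \ref{thm:perc} by a coarse-graining argument, and then to convert the crossing paths of ``good boxes'' into crossing paths of points in $\eta^\alpha(\lambda)$ satisfying all the geometric requirements (a.1)--(g.1). First, I would fix a length scale $s = s(\alpha,\lambda)$ with $s \geq 10\lambda$, to be tuned later, and work on the lattice $s\Z^2$. To each $j \in s\Z^2$ I would attach an indicator $X_j$ encoding that the square $Q_s(j)$ is \emph{good}, meaning that $\eta \cap Q_s(j)$ is nonempty, that every point $x \in \eta \cap Q_{3s}(j)$ satisfies $\mathrm{in}(C(x,\eta)) > \alpha$, $\mathrm{diam}(C(x,\eta)) < \alpha^{-1}$ and $\eta(B_\lambda(x)) \leq \alpha^{-1}\lambda^2$ (so that it lies in $\eta^\alpha(\lambda)$), and that any two points $x \in Q_s(j)\cap \eta$, $y \in Q_s(j')\cap \eta$ with $|j-j'| = s$ can be joined by a path in $\eta^\alpha(\lambda) \cap (Q_{3s}(j)\cup Q_{3s}(j'))$ of bounded length with consecutive steps at most $\lambda$. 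The probability that any individual clause fails can be made arbitrarily small by taking $\alpha \to 0$ and $\lambda \to +\infty$, via standard tail estimates for the Poisson distribution and for cell sizes of Poisson--Voronoi tessellations.

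The variables $\{X_j\}_{j \in s\Z^2}$ are not independent, but each $X_j$ is measurable with respect to the restriction of $\eta$ to $Q_{3s}(j)$, so they form a finite-range dependent family. Applying the Liggett--Schonmann--Stacey stochastic domination theorem, once $\mathbb{P}(X_j = 1)$ is sufficiently close to $1$ the family $\{X_j\}$ dominates an i.i.d. Bernoulli$(p)$ site percolation with $p > p_{\mathrm{cr}}$. This determines the values $\alpha_0$ and $\lambda_0$ appearing in the statement. Theorem \ref{thm:perc} applied to this dominated Bernoulli process then yields, for almost every realization and all $T$ larger than some $T_0(\omega,C)$, at least $\widetilde{K}_\delta T/s$ disjoint open crossings of $R^h_{T,\delta}(x_0)$ and of $R^v_{T,\delta}(x_0)$ inside $s\Z^2$ for every $x_0 \in TC$.

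I would then convert each coarse crossing $j_1,\ldots,j_N$ into a path in $\eta^\alpha(\lambda)$ by choosing a representative $x_i \in \eta \cap Q_s(j_i)$ in each box and concatenating the connecting sub-paths furnished by the goodness clause; by construction these sub-paths lie in $\eta^\alpha(\lambda)$ with consecutive vertices at distance at most $\lambda$, giving (a.1), (f.1) and (g.1). Since the coarse crossings are strictly contained in the rectangles and the connecting sub-paths stay in enlarged boxes $Q_{3s}$, choosing $s \ll T\delta$ gives (b.1) and (c.1). To secure (e.1) I would retain only every third coarse crossing in each family: the resulting paths live in coarse strips separated by at least $3s \geq 3\lambda$. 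Finally, the uniform bound $\eta(B_\lambda(x)) \leq \alpha^{-1}\lambda^2$ together with the in-radius bound controls the number of path vertices contributed by each good box, so that $\ell(\mathbf{h}^T_i(x_0))$ is comparable to $N$, and hence to $T/s$, yielding (d.1) with a constant $\Upsilon_\delta$ depending only on $\alpha,\lambda,\delta$.

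The main obstacle is the connectivity clause in the definition of a good box: deterministically joining two Poisson points lying in neighboring good boxes by a $\lambda$-step path entirely contained in $\eta^\alpha(\lambda)$. This is not automatic and requires a careful geometric construction exploiting both the lower bound on in-radii, which prevents Voronoi cells from being too thin and therefore bounds consecutive step lengths from above via the in-radius/diameter pair, and the interaction radius $\lambda > 2/\alpha$ imposed in \eqref{bnd:par}, which guarantees that neighboring Voronoi cells in a good region can always be bridged within the ball $B_\lambda$. Once this deterministic clause is shown to hold on the event $\{X_j = 1\}$ with high probability, the remainder is a rather standard stochastic-domination plus Bernoulli-percolation reduction, mirroring the scheme in \cite{braides2020homogenization}.
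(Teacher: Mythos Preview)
Your strategy is correct and follows the same coarse-graining-to-Bernoulli scheme as the paper, but there are some tactical differences worth noting.

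First, the paper avoids Liggett--Schonmann--Stacey altogether: it defines the ``good box'' event for a square $Q_{\Lambda\lambda}(i)$ purely in terms of $\eta$ restricted to that square (each $\lambda$-sub-square contains between $1$ and $\tfrac{1}{8}\alpha^{-1}\lambda^2$ points; points are mutually $2\alpha$-separated and $2\alpha$ away from $\partial Q_{\Lambda\lambda}(i)$). These conditions are local, so the indicators are genuinely i.i.d., and Theorem~\ref{thm:perc} applies directly. By contrast, your good-box event references $\mathrm{in}(C(x,\eta))$ and $\mathrm{diam}(C(x,\eta))$ for $x\in Q_{3s}(j)$; since the Voronoi cell of $x$ depends on points of $\eta$ arbitrarily far from $x$, this is \emph{not} measurable with respect to $\eta\res Q_{3s}(j)$ as you claim, and the finite-range structure is not immediate. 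This is easily repaired (e.g.\ by also demanding that every $\lambda$-sub-cube of a slightly larger box be nonempty, which forces Voronoi cells to be locally determined), but as written it is a gap.

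Second, the paper resolves your ``main obstacle'' --- the connectivity clause --- by an explicit construction rather than by postulating it as part of goodness: given a coarse open path $\{i_1,\ldots,i_N\}$, it takes the segments $s_m$ joining the centres of consecutive boxes and declares the $\eta$-path to be the set of all $x\in\eta$ whose Voronoi cell meets some $s_m$. The local conditions above then force $\mathrm{diam}(C(x,\eta))<\alpha^{-1}$ and $\mathrm{in}(C(x,\eta))>\alpha$ (via \cite[Lemma~4.1]{braides2020homogenization}), and the hypothesis $\lambda>2/\alpha$ makes consecutive Voronoi neighbours along the segment $\lambda$-close, giving (a.1), (f.1), (g.1) directly. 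This is more concrete than your approach, which defers the geometric work to a clause whose verification you leave open.

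For the length upper bound in (d.1) both arguments use the same area-counting trick (discard the crossings that are too long; there cannot be many since the paths are disjoint and each cell has area at least $\pi\alpha^2$). Your thinning step for (e.1) is a clean alternative to the paper's (terser) observation that the Voronoi-segment paths stay within a tube of controlled width around the coarse path.
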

\begin{proof}
Fix $\Lambda\in \N $ and consider the division of $\R^2$ in the grid
\begin{align*}
	I&:=\{i\in \Lambda \lambda\Z^2 \ | \ (Q_{\Lambda \lambda}(i) \cap Q_T \neq \emptyset\}\\
	\mathcal{Q}&:=\{Q_{\Lambda \lambda}(i)\ | \ i\in I\}
	\end{align*} 
and, for any $Q_{\lambda\Lambda}(i)\in \mathcal{Q}$ consider the refinement
\begin{align*}
	J(i)&:=\{j\in \lambda\Z^2 \ | \ (Q_\lambda(j) \cap Q_{\Lambda\lambda}(i) \neq \emptyset\}\\
	\mathcal{Q}'(i)&:=\{Q_\lambda(j) \ | \ j\in J(i)\}\\
	N'(i)&:=\#(J'(i))=\Lambda^2.
	\end{align*} 
For any $i\in I$ we introduce the following events $\Omega^{\alpha,\lambda}_i$ of all the realizations $\omega$ with the following properties
	\begin{itemize}
	\item[I)] $1 \leq \eta(Q_\lambda(j) )\leq \frac{\alpha^{-1}}{8} \lambda^2 $ for all $j\in J(i)$;
	\item[II)] $\mathrm{dist}(x,y)\geq 2\alpha$ for all $x,y\in Q_{\Lambda \lambda}(i)\cap \eta$;
	\item[III)] $\mathrm{dist}(x,\partial Q_{\Lambda\lambda}(i))\geq 2\alpha$ for all $x\in Q_{\Lambda \lambda}(i)\cap \eta$;
	\end{itemize}
We also define
\begin{equation*}	\xi^{\alpha,\lambda}_i(\omega):=\ca_{\Omega^{\alpha,\lambda}_i}(\omega).
\end{equation*}		
Observe that the probability of the set of realizations satisfying properties (II) and (III) tends to $1$ if $\alpha\rightarrow 0$ (we refer to the same argument as in the proof of \cite[Lemma 4.1]{braides2020homogenization}). Moreover 
	\[
	\mathbb{P}\left(1\leq \eta(Q_\lambda(j) )\leq \alpha^{-1} \lambda^2\right)=e^{-\lambda^2}\sum_{m=1}^{\lfloor \frac{\alpha^{-1}}{8} \lambda^2\rfloor}\frac{\lambda^{2m}}{m!} =:p_\lambda(\alpha).
	\]
In particular, setting
	\begin{align*}
	A^{\alpha,\lambda}_i&:=\left\{\omega \in \Omega \ | \   1\leq \eta(Q_\lambda(j) )\leq \frac{\alpha^{-1}}{8} \lambda^2 \ \ \text{for all $j\in I(i)$}\right\}
	\end{align*}
we have that
$
	\mathbb{P}(A^{\alpha,\lambda}_i)= p_\lambda(\alpha)^{\Lambda^2}
$.

If $\alpha\rightarrow +\infty$ we have $p_\lambda(\alpha)\rightarrow 1-e^{-\lambda^2}$.  In particular, for any choice of $\Lambda>1$, $\gamma> 0$ we can find $\alpha_0(\Lambda), \lambda_0(\Lambda)>0$ such that 
	\[
	\mathbb{P}(\xi_{i}^{\alpha,\lambda}=1)=p(\alpha,\lambda)>1-\gamma \ \ \ \text{for all $\alpha<\alpha_0$,  $\lambda>\lambda_0 $} 
	\]
Thus for a suitably small $\alpha$ and big $\lambda$ we can invoke Theorem \ref{thm:perc} and find a $K_{\delta}$ independent of the realization and $T_0=T_0(\omega,C)$ such that, for any $R_{T,\delta}^v(x_0),R_{T,\delta}^h(x_0)$ (note that this is uniform as $x_0/T \in C$) contains at least $K_{\delta}T$ disjoint paths (connecting the two opposite sides of $ R_{T,\delta}^v(x_0),R_{T,\delta}^h(x_0) $ vertically and horizontally respectively) of neighboring squares from $\mathcal{Q}$. We now define the Voronoi paths as follows. Consider, for instance $\{i_j\}_{j=1}^{N}$ to be the first (from the bottom) horizontal path in $R^h_{T,\delta}(x_0)$ and let $s_{1},\ldots,s_{N}$ be the segment joining the centers of neighboring squares (for instance ($s_m$ joins the centers of $Q_{\Lambda\lambda}(i_m),Q_{\Lambda\lambda}(i_{m+1})$. We set
	\[
	\mathbf{h}_1^T(x_0):=\{x \in \eta : \ (C(x;\eta)\cap s_{j_m})\neq \emptyset \ \text{for some $m=1,\ldots,N$} \}.
	\] 
We define all the other horizontal paths accordingly, as well as the vertical paths. We refer to $\mathbf{h}_1^T(x_0)$ without loss of generality in proving the properties.  
By arguing as in the proof of Lemma \cite [Lemma 4.1]{braides2020homogenization} we can derive also that
	\[
	\mathrm{diam}(C(x;\eta))\leq \frac{1}{\alpha}, \ \ \ I(C(x;\eta))>\alpha.
	\] 
Moreover 
By choosing $\lambda>\max\left\{\sfrac{2}{\alpha},\lambda_0\right\}$ we can guarantee additionally that 
	\[
	\mathbf{h}^T_1(x_0)\subset \bigcup_{j=1}^{N} \bigcup_{\substack{m\in J(i_j):\\ Q_{\lambda}(m)\cap s_j\neq \emptyset}} Q_{\lambda}(m)
	\]
In particular if $x\in \mathbf{h}^T_1(x_0)$ then $x\in Q_{\lambda}(m)\subset \subset Q_{\Lambda \lambda}(i_j)$ for some $m\in J(i_j)$ intersecting $s_{j}$. In particular, $B_{\lambda}(x)$ is contained in the union of the eight squares whose boundary intersects in a non trivial way the boundary of $Q_{\lambda}(m)$ and this union, call it $O$, is still contained in $Q_{\Lambda\lambda}(i_j)$ and made by at most $8$ of such squares. Therefore
	\[
	\eta(B_{\lambda}(x))\leq  \eta(O)\leq  \alpha^{-1} \lambda^2.
	\]
This implies that $\mathbf{h}^T_1(x_0)$ is a path on $\eta^{\alpha}(\lambda)$ and we get property (a.1).  Properties (b.1) and  (c.1) are immediate from Bernoulli site percolation.  Also property (e.1) is a consequence of the fact that any path is contained in a square around the segment joining the centers and thus the distance between two paths is at least $3\lambda$.  By the same principle,  if $\Lambda$ was chosen big enough from the very beginning (say bigger than $10$) whenever $x\in (\mathbf{h}^T_1(x_0))_{3\lambda}$ then it belongs to a square of size $\lambda$ contained in $Q_{\Lambda\lambda}(i_j)$ and the same estimate applies on $\eta(B_{\lambda}(x))$, achieving property (f.1).  If instead $x,y$ are neighboring points in a path, then $|x-y|\leq \frac{2}{\alpha}<\lambda$ yielding property (g.1).  \smallskip

It remains to show property (d).  Due to the fact that $\mathrm{diam}(C(x;\eta))\geq \alpha^{-1}$ we have 
	\begin{align*}
	\ell(\mathbf{h}^T_m\cap R_{T,\delta}^h) \geq \alpha T
	\end{align*}
	 for any $m=1,\ldots, K_{\delta}T$.
Fix $L>0$ and consider 
	\[
	I(L):=\{m=1,\ldots K_{\delta} T \ | \ \ell(\mathbf{h}^T_m\cap R_{T,\delta}^h) > L T \}.
	\]
Then,  since the paths are disjoint, we have
$$
	\bigcup_{m\in I(L)} \mathcal{V}(\mathbf{h}^T_m) \subset R_{T,\delta}^h(x_0),\qquad
	\Bigl|	\bigcup_{m\in I(L)} \mathcal{V}(\mathbf{h}^T_m) \Bigr| \leq T^2\delta,\qquad
	\#(I(L)) \leq \frac{T\delta}{L\alpha^2 \pi}.
$$
If we now choose $L=L_{\delta}$ large enough we can ensure that
	\[
	K_{\delta} T -  T \frac{\delta}{L\alpha^2 \pi}=K'_{\delta} T
	\]
with $K_{\delta}'<K_{\delta}$.  Then,  up to discarding the paths labeled by $I(L)$ (which do not affect properties (a)--(g) ) we can reduce ourselves to $K_{\delta}'T $ paths satisfying also property (d) with 
$
	\Upsilon_{\delta}=\max\{ L_{\delta}, \alpha^{-1} \}
$.
\end{proof}
Observe that the above proposition is not sufficient to conclude the validity of Theorem \ref{thm:pathConn} yet,  since a straight application of Proposition \ref{prop:Ex} on $R_{T,t}^h(x)$ (that would be required to get the horizontal and vertical paths connecting the opposite side of $Q$) would yield the constants given by property (d) of  Proposition \ref{prop:Ex} dependent of $t$.  We instead require a uniform geometry. Moreover we need to localize the estimate given by property (d) of Proposition \ref{prop:Ex}  to each square $Q_{i,j}^t$. Therefore an additional construction is required.  \smallskip

The following corollary comes as an application of Proposition \ref{prop:Ex}.
\begin{corollary}\label{cor:onethird}
There exists $\Upsilon,\alpha_0,\lambda_0$ such that,  for any fixed $t>0$ and provided $\alpha,\lambda$ satisfies
\[
\alpha<\alpha_0 \ ,\lambda>\max\Bigl\{\frac{2}{\alpha}, \ \lambda_0\Bigr\},
\] 
then for almost all $\omega\in \Omega$ we can find $\e_0(\omega,t)>0$ for which,  if $\e<\e_0$ any rectangles $R^{h}_{t,1/2}(x), R^{v}_{t,1/2}(x)$ with $x\in Q_{t k_t} $ contains disjoint horizontal paths $\mathbf{h}^x_1,\ldots ,\mathbf{h}^x_{M_{\e,t}}$, (and respectively vertical paths $\mathbf{v}^x_1,\ldots ,\mathbf{v}^x_{M_{\e,t}}$) satisfying the following properties.
\begin{itemize}
	\item[a.2)] all the paths are in $\e \eta^{\alpha}(\lambda)$;
	\item[b.2)] for any $m=1,\ldots, M_{\e,t}$, $\mathbf{h}^x_{m}$ connects the two opposite sides of $R^{h}_{t,1/2}(x)$ of size $t$ and is strictly contained in $R^{h}_{t,1/2}(x)$; 
		\item[c.2)] for any $m=1,\ldots, M_{\e,t}$, $\mathbf{v}^x_{m}$ connects the two opposite sides of $R^{v}_{t,1/2}(x)$ of size $t$ and is strictly contained in $R^{v}_{t,1/2}(x)$; 
	\item[d.2)] the following bounds hold 
	\begin{equation}\label{eqn:unifBndas}
\frac{t}{\Upsilon  \e}\leq \ell(\mathbf{h}^x_m\cap R^{h}_{t,1/2}(x), )\leq \frac{\Upsilon t }{\e}, \qquad
\frac{t}{\Upsilon \e}\leq \ell(\mathbf{v}^x_m\cap R^{v}_{t,1/2}(x),)\leq \frac{\Upsilon t}{\e},\quad 	M_{\e,t}\geq \frac{t}{\Upsilon \e}
	\end{equation}
	
	\item[e.2)] $\mathrm{dist}(\mathbf{h}^x_m,\mathbf{h}^{x'}_s)\geq 3\lambda \e$ and $\mathrm{dist}(\mathbf{v}^j_m,\mathbf{v}^{j'}_s)\geq 3\lambda \e$ for all $i, i',j,  j'\in \{1,\ldots,k_t\}$, $m,s\in\{1,\ldots,M_{\e,t}\}$ (with $m\neq s$ for $i=i'$ or $j=j'$);
	\item[f.2)] If $y\in (\mathbf{h}^x_m)_{3\lambda\e}\cap \eta_{\e}$, $(y\in (\mathbf{v}^x_m)_{3\lambda\e}\cap \eta_{\e})$ it holds $\eta_{\e}(B_{\lambda\e}(y))\leq \frac{1}{\alpha} \lambda^2$;
	\item[g.2)] If $z,y\in \mathbf{h}^x_m$, ($z,y\in \mathbf{v}^x_m$) neighboring points then $|z-y|\leq \lambda\e$;
\end{itemize}
\end{corollary}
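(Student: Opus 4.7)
The plan is to derive Corollary \ref{cor:onethird} directly from Proposition \ref{prop:Ex} via a rescaling. The proposition already contains all the percolation content, so the corollary is a matter of (i) localizing the rectangles at an arbitrary center $x\in Q_{tk_t}$ rather than at the origin, and (ii) obtaining a length constant $\Upsilon$ independent of the mesoscopic scale $t$. Both are achieved by fixing the aspect ratio $\delta=1/2$ in Proposition \ref{prop:Ex} (which fixes universal constants $K_{1/2}$, $\Upsilon_{1/2}$) and then pushing paths forward under the dilation $y\mapsto \e y$, which carries $\eta^{\alpha}(\lambda)$ to $\e\eta^{\alpha}(\lambda)$.

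I would first choose $\alpha<\alpha_0$ and $\lambda>\max\{2/\alpha,\lambda_0\}$ as in the hypothesis and set $\Upsilon:=\max\{\Upsilon_{1/2},1/K_{1/2}\}$. With $t>0$ fixed, take the compact set $C:=\overline{Q_{k_t}}\subset\R^2$ (compact because $k_t$ is a fixed number once $t$ is fixed) and apply Proposition \ref{prop:Ex} with this $C$ and $\delta=1/2$. This produces, almost surely, a threshold $T_0=T_0(\omega,t)$; I would then set $\e_0(\omega,t):=t/T_0$. For any $\e<\e_0$ and any $x\in Q_{tk_t}$, the choices $T:=t/\e$ and $x_0:=x/\e$ satisfy $T>T_0$ and $x_0\in T\cdot C$, since $|x|_\infty\le tk_t/2$ implies $|x_0|_\infty\le Tk_t/2$. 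Proposition \ref{prop:Ex} then yields $K_{1/2}T$ disjoint horizontal paths $\mathbf{h}^{T}_i(x_0)$ in $R^h_{T,1/2}(x_0)\cap\eta^{\alpha}(\lambda)$ and $K_{1/2}T$ vertical paths $\mathbf{v}^{T}_i(x_0)$ in $R^v_{T,1/2}(x_0)\cap\eta^{\alpha}(\lambda)$, meeting properties (a.1)--(g.1).

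I would then define $\mathbf{h}^{x}_m:=\e\,\mathbf{h}^T_m(x_0)$, $\mathbf{v}^{x}_m:=\e\,\mathbf{v}^T_m(x_0)$, $M_{\e,t}:=K_{1/2}t/\e$, and verify (a.2)--(g.2) one at a time by scaling. Most are immediate: the dilation sends $R^{h/v}_{T,1/2}(x_0)$ to $R^{h/v}_{t,1/2}(x)$, preserves path cardinalities (so (d.2) follows from (d.1) with the same $\Upsilon_{1/2}\le\Upsilon$, and $M_{\e,t}\ge t/(\Upsilon\e)$ holds by the choice $\Upsilon\ge 1/K_{1/2}$), scales all Euclidean distances by $\e$ (yielding (e.2) and (g.2)), and the ball count satisfies $\eta_\e(B_{\lambda\e}(y))=\eta(B_\lambda(y/\e))\le \alpha^{-1}\lambda^2$ by (f.1), which transfers (f.2) directly.

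There is essentially no substantive obstacle here: the percolation argument is already discharged in Proposition \ref{prop:Ex}, and the only design decision is the choice $C=\overline{Q_{k_t}}$, which absorbs all the $t$-dependence into the threshold $\e_0(\omega,t)$ and leaves $\Upsilon$ universal. I would read property (e.2), whose notation is partly inherited from Definition \ref{def:pathConn}, as the pairwise $3\lambda\e$-separation of the $M_{\e,t}$ paths inside a single rectangle, which is precisely the $\e$-dilation of (e.1).
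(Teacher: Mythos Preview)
Your proposal is correct and follows essentially the same route as the paper: apply Proposition \ref{prop:Ex} with $\delta=1/2$, $T=t/\e$, and compact set $C=\overline{Q_{k_t}}$, then scale the resulting paths by $\e$ and set $\Upsilon=\max\{\Upsilon_{1/2},K_{1/2}^{-1}\}$. You are in fact more explicit than the paper about the choice of $C$ and the relation $\e_0=t/T_0$, which is welcome.
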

\begin{proof}
By invoking Proposition \ref{prop:Ex},  (applied with $\sfrac{t}{\e} $ in place of $T$) for almost all realizations we can find $K=K_{1/2}, \Upsilon'=\Upsilon_{1/2}$, $\alpha_0$, $\lambda_0$ such that, provided $\alpha,\lambda$ satisfies \eqref{eqn:unifBndAPP} then for almost all realizations $\omega$ we can find $\e_0=\e_0(\omega,t)$ for which any $R_{\sfrac{t}{\e},\sfrac{1}{2}}^h(x_0), R_{\sfrac{t}{\e},\sfrac{1}{2}}^v(x_0)$,  (provided $x_0\in Q_{k_t \sfrac{t}{\e}}$) contains at least $K\sfrac{t}{\e} $ disjoint paths satisfying properties (a)--(g) of Proposition \ref{prop:Ex}.  In particular by scaling back to $\eta_{\e}=\e\eta$ we have that,  for any $x\in Q_{t k_t}$, $\e<\e_0$ we can find $\mathbf{h}^x_1,\ldots, \mathbf{h}^x_{K \sfrac{t}{\e} }\in \e\eta^{\alpha}(\lambda)$ disjoint horizontal (and respectively vertical) paths contained in $ R_{t,\sfrac{1}{3}}^h(x)$ ($R_{t,\sfrac{1}{2}}^v(x)$) such that properties (a.2)--(g.2) of Corollary \ref{cor:onethird} are implied by Properties (a.1)--(g.1) of Proposition \ref{prop:Ex} by considering $\Upsilon=\max\{K^{-1},\Upsilon'\}$.
\end{proof}
We can now finally prove Theorem \ref{thm:pathConn}.  Let us briefly explain how we will proceed.  We will consider rectangles $R_{t,\sfrac{1}{2}}^h(x)$ whose edge proportion is fixed and for which Corollary \ref{cor:onethird} ensures the existence of the sought paths. These paths are not long enough to join the opposite sides of $R^h_i$, $R^v_j$ but the geometry of the paths is independent of $t,\e$ (since it depends only on the edge proportion $1/2$).  Thence we will perform a construction that will allow us to exploit such rectangles to build a grid on the whole square $Q_{tk_t}$ without affecting the constants and all the other properties.   We introduce some definitions in order to clarify the construction. We focus on vertical paths, since the construction will be performed in just one direction (the other following in the same way).

\begin{definition}\label{def:PJ}
Let $R$ be a rectangle and $\h,\v$ be a horizontal and vertical path in $R$ connecting opposite sides of $R$ in the respective directions.  Let $x\in \eta\cap R$.  We adopt the following terminology: 
\begin{itemize}
\item[-] We say that a point $x$ {\em lies below} $\h$ in $R$ (and we write $x\bigtriangledown_R \h$) if any path in $\eta\cap R$ that links $x$ to the \textit{top} of $R$ intersects $\h$;
\item[-] We say that a point $x$ {\em lies above} $\h$ in $R$ (and we write $x\bigtriangleup_R \h$) if any path in $\eta\cap R$ that links $x$ to the \textit{bottom} of $R$ intersects $\h$;
\item[-] We say that a point $x$ {\em lies on the left} of $\v$ in $R$ (and we write $x\lhd_R \v$) if any path in $\eta\cap R$ that links $x$ to the \textit{right} of $R$ intersects $\v$;
\item[-] We say that a point $x$ {\em lies on the right} of $\v$ in $R$ (and we write $x\rhd_R \v$) if any path in $\eta\cap R$ that links $x$ to the \textit{left} of $R$ intersects $\v$;
\end{itemize}
\end{definition}
\begin{proof}[Proof of Theorem \ref{thm:pathConn}]
 Let $\Upsilon, \alpha_0,\lambda_0$ be given by Corollary \ref{cor:onethird}. Fix $\alpha,\lambda$ satisfying \eqref{eqn:unifBnd},  let $t>0$.  Then for almost all realizations we can find $\e_0$ such that,  if $\e<\e_0$, the existence of paths in $R^h_{t,\sfrac{1}{2}}(x), R^v_{t,\sfrac{1}{2}}(x)$ is guaranteed for $x\in Q_{tk_t}$, according to Corollary \ref{cor:onethird}.  
\begin{figure}
\includegraphics[scale=1]{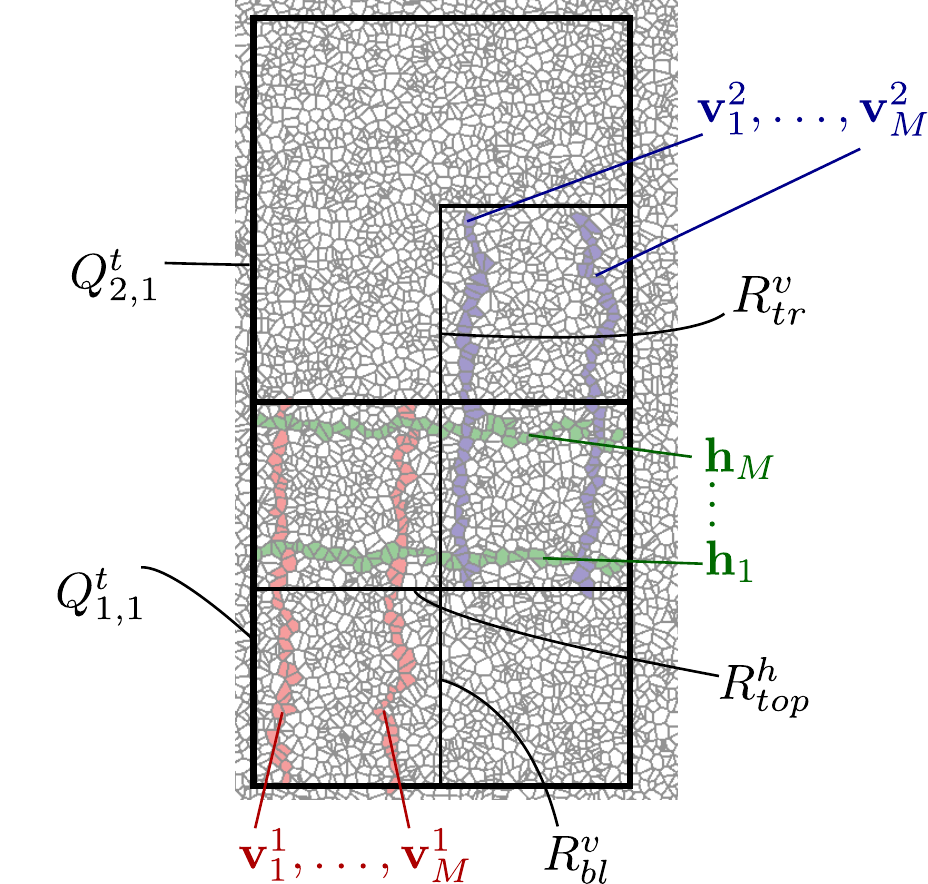}
\caption{The notation adopted in the proof of Theorem \ref{thm:pathConn}.  Here are depicted the sets $\mathcal{V}_{\e}(\v_j^i)$, and $\mathcal{V}_{\e}(\h_j)$ for $i=1,2$ $j=1,\ldots,M$.}
\label{fig:Construction}
\end{figure}
We set the notation with the aid of Figure \ref{fig:Construction}.  Consider, for instance,  the first square $Q^t_{1,1}$ on the bottom-left corner corner.  Consider the three rectangles depicted in Figure \ref{fig:Construction} with edges proportion of $1/2$,  namely the vertical one on the bottom left $R_{bl}^v\subset Q_{1,1}^t$,  the horizontal one on the top $R^h_{top}\subset Q_{1,1}^t$ and the vertical one on the top right $R_{tr}^v$ intersecting $R^h_{top}$ and the adjacent square $Q^t_{2,1}$.  Each of them contains at least $M_{\e,t}\geq \frac{\Upsilon t}{\e}$ paths satisfying properties (a.2)--(g.2) of Corollary \ref{cor:onethird} relatively to their own rectangle.  Now we label these paths from the $$
	\mathbf{h}_1,\ldots \mathbf{h}_{M} \in R^h_{top},\qquad
	\mathbf{v}^1_1,\ldots \mathbf{v}^1_{M}\in R_{bl}^v, \qquad
	\mathbf{v}^2_1,\ldots \mathbf{v}^2_{M} \in R^v_{tr},
$$
with the shorthand $M=M_{\e,t}$. Now the strategy is to join them and then refine the family in a way that all the properties (a)--(g) are ensured.  

%
%
\begin{figure}
\includegraphics[scale=0.8]{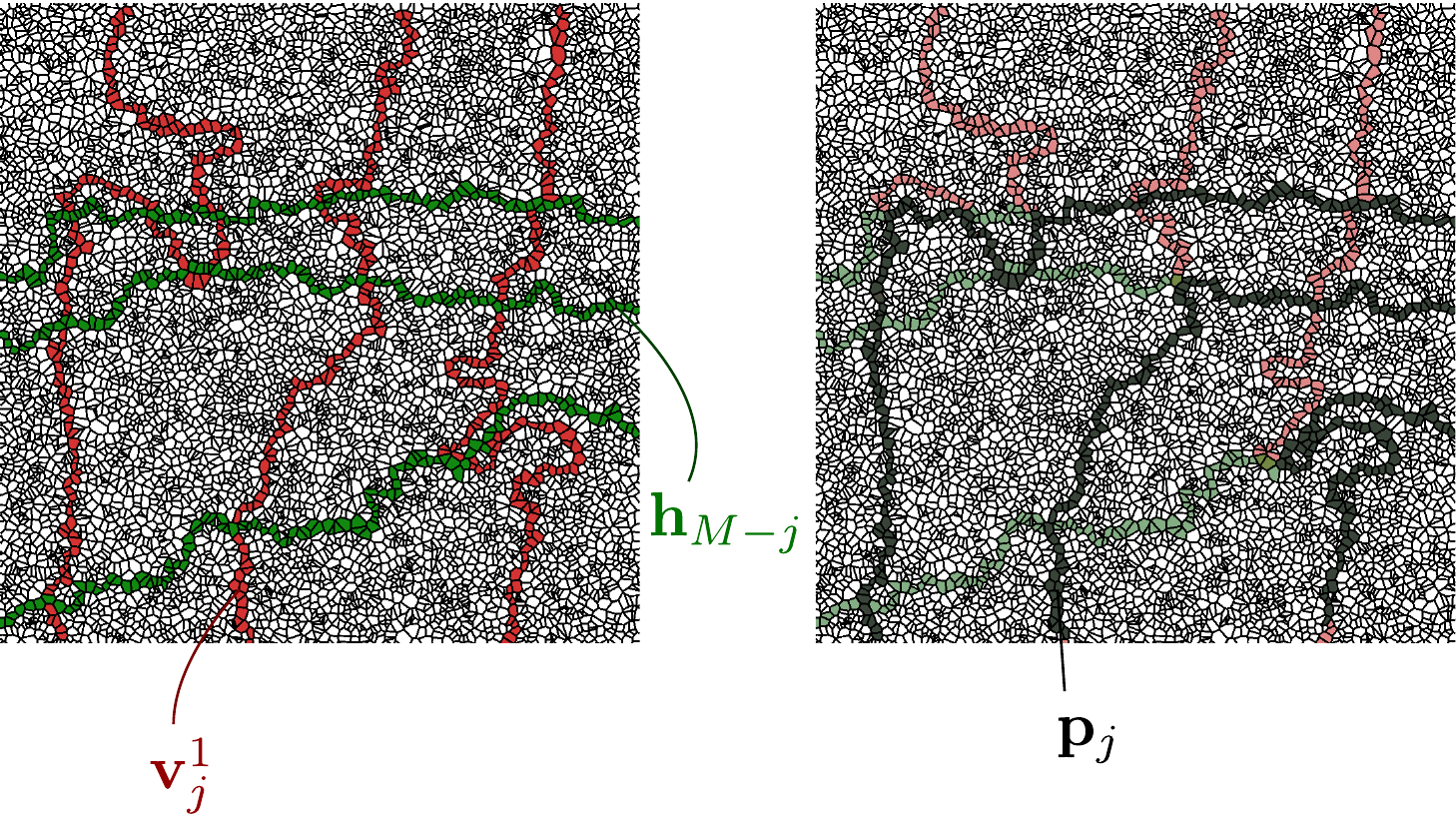}
\caption{The paths $\p_j$ in the box $R$ built in the proof of Theorem \ref{thm:pathConn}.  Again, we are depicting the Voronoi cells of the paths $\mathcal{V}_{\e}(\cdot)$. }
\label{fig:Construction2}
\end{figure}
Consider a generic $\mathbf{v}_j^1$.  Let $R=R_{bl}^v\cap R_{top}^h$,  $\tilde{R}=R_{tr}^v\cap R_{top}^h$.  We define the following path (see for instance Figure \ref{fig:Construction2}).
	\begin{align*}
	\p_j:=\{x\in \v_j^1 : x\bigtriangledown_R \h_{M-j}\} \cup \{y\in \h_{M-j} : \v_j^1\triangleleft_R y\triangleleft_{\tilde{R}} \v_j^2\} \cup\{x \in \v_j^2 : x \bigtriangleup_{\tilde{R}} \h_{M-j} \} 
	\end{align*}
Note that $\p_j$ is a path which has the same starting point than $\v_j^1$ and the same ending point of $\v_j^2$.  Moreover with this definition we note that
	\[
	\mathrm{dist}(\p_j,\p_{j+1})> 3\lambda\e.
	\]
Indeed, suppose that for some $(y_j,y_{j+1})\in \p_j\times \p_{j+1}$ we have
	\[
	|y_j-y_{j+1}|\leq 3\lambda\e.
	\]
Then one of the following is necessarily in force
	\begin{itemize}
	\item[(I)]	 $y_j\in \h_{M-j}$ and $y_{j+1}\in \v_{j+1}^1$;
	\item[(II)] $y_j \in\h_{M-j}$ and $y_{j+1}\in \v_{j+1}^2$;
	\item[(III)] $y_j\in \v_j^1$  and  $y_j\in \h_{M-j-1}$;
	\item[(IV)] $y_j\in \v_j^2$  and  $y_j\in \h_{M-j-1}$.
\end{itemize}
The other possibilities are ruled out by the fact that 
	\[
	\mathrm{dist}(\v_j^1,\v_{j+1}^1)>3\lambda\e , \ \ \ \ \ \mathrm{dist}(\v_j^2,\v_{j+1}^2)>3\lambda\e, \ \ \ \ \ \mathrm{dist}(\h_{M-j}, \h_{M-j-1}^1)>3\lambda\e.
	\]
Now case (I) cannot be attained since it would imply $y_{j+1} \bigtriangledown_R \h_{M-j-1} $ and thus $\h_{M-j}$ would get too close to $\h_{M-j-1}$.  Analogously for case (II), since we would have $y_{j} \triangleleft_{\tilde{R}} \v_{j}^2$ and thus $\v_j^2$ would get close to $\v_{j+1}^2$.  The other cases follow the same line. In particular none of them can be achieved and thus we must get that
	\[
	\mathrm{dist}(\p_j,\p_{j+1})>3\lambda\e.
	\]

By applying this construction we can prolong $\mathbf{v}_j^1$ a bit outside $Q_{1,1}^t$.  If we shift this construction and we repeat it, we can extend each path further until we reach $Q_{k_t,1}^t$.   In this way we are able to obtain a family of $M_{\e,t}$ paths in each rectangle $R_j^v(t)$.  By exploiting the same argument, with the required modification we also obtain the horizontal paths.  This produces a family of vertical and horizontal paths $G_{\e,t}$ which satisfies properties (a), (b), (c), (f) and (g) (from the validity of properties (a.2), (b.2), (c.2), (f.2) and (g.2) of Corollary \ref{cor:onethird}). Property (e) instead follows by the previous argument and the care adopted to junction the paths.  It remains to show that they also meet the requests of property (d).  Up to discarding some paths (operation that never affects the other properties) we can ensure that $M_{\e,t}\leq \Upsilon \frac{t}{\e}$.  Moreover, each $\mathbf{v}^i_m\cap Q_{i,j}^t$ (as well as the vertical ones) is the junction of a finite number (independent of $t,\e$) of paths of length satisfying \eqref{eqn:unifBndas}.  Therefore,  up to increase a bit $\Upsilon$ (but independently of $t,\e$) we can guarantee that property (d) is in force.  This concludes the proof. 
\end{proof}

\subsection*{Acknowledgments}
This work has been supported by PRIN 2017  `Variational methods for stationary and evolution problems with singularities and interfaces'. The authors are members of GNAMPA, INdAM. The authors acknowledge the MIUR Excellence Department Project awarded to the Department of Mathematics, University of Rome Tor Vergata, CUP E83C18000100006.

\bibliography{references}
\bibliographystyle{plain}

\end{document}